\numberwithin{equation}{section}
\newtheoremstyle{dotless}{}{}{\itshape}{}{\bfseries}{}{ }{}
\theoremstyle{dotless}
\newtheorem{lem}{Lemma}[section]
\newtheorem*{lem*}{Lemma}
\newtheorem{prop}{Proposition}[section]
\newtheorem*{prop*}{Proposition}
\newtheorem{cor}{Corollary}[section]
\newtheorem*{cor*}{Corollary}
\newtheorem{thm}{Theorem}[section]
\newtheorem*{thm*}{Theorem}
\newtheorem{thmL}{Theorem}
\newtheorem{rem}{Remark}[section]
\newtheorem{definition}{Definition}[section]
\newcommand{\al}{\alpha}
\newcommand{\de}{\delta}
\newcommand{\De}{\Delta}
\newcommand{\ep}{\epsilon}
\newcommand{\la}{\lambda}
\newcommand{\La}{\Lambda}
\newcommand{\te}{\theta}
\newcommand{\Te}{\Theta}
\newcommand{\om}{\omega}
\newcommand{\ga}{\gamma}
\newcommand{\Ga}{\Gamma}
\newcommand{\dd}[1]{\hspace{.5mm}\mathrm{d}#1\hspace{.5mm}}
\newcommand{\R}{\mathbb{R}}
\newcommand{\ak}{\mathfrak{a}}
\newcommand{\ek}{\mathfrak{e}}
\newcommand{\bk}{\mathfrak{b}}
\newcommand{\mk}{\mathfrak{m}}
\newcommand{\uk}{\mathfrak{u}}
\newcommand{\vs}{\vspace{3mm}}
\newcommand{\no}{\noindent}
\newcommand{\rt}[1]{\sqrt{#1}\hspace{1mm}}
\title{Scaled penalization of Brownian motion\\ with drift and the Brownian ascent}
\author{Hugo Panzo\footnote{Department of Mathematics, University of Connecticut, \href{mailto:hugo.panzo@uconn.edu}{hugo.panzo@uconn.edu}}}
\begin{document}

\maketitle

\begin{abstract}
We study a scaled version of a two-parameter Brownian penalization model introduced by Roynette-Vallois-Yor in \cite{long_bridges}. The original model penalizes Brownian motion with drift $h\in\R$ by the weight process ${\big(\exp(\nu S_t):t\geq 0\big)}$ where $\nu\in\R$ and $\big(S_t:t\geq 0\big)$ is the running maximum of the Brownian motion. It was shown there that the resulting penalized process exhibits three distinct phases corresponding to different regions of the $(\nu,h)$-plane. In this paper, we investigate the effect of penalizing the Brownian motion concurrently with scaling and identify the limit process. This extends a result of \cite{Penalising} for the ${\nu<0,~h=0}$ case to the whole parameter plane and reveals two additional ``critical" phases occurring at the boundaries between the parameter regions. One of these novel phases is Brownian motion conditioned to end at its maximum, a process we call the \emph{Brownian ascent}. We then relate the Brownian ascent to some well-known Brownian path fragments and to a random scaling transformation of Brownian motion recently studied in \cite{pseudo_msj}.
\end{abstract}

\section{Introduction}

Brownian penalization was introduced by Roynette, Vallois, and Yor in a series of papers where they considered limit laws of the Wiener measure perturbed by various weight processes, see the monographs \cite{global, Penalising} for a complete list of the early works. One motivation for studying Brownian penalizations is that they can be seen as a way to condition Wiener measure by an event of probability $0$. Another reason penalizations are interesting is that they often exhibit phase transitions typical of statistical mechanics models. Let $\mathcal{C}\big(\R_+;\R\big)$ denote the space of continuous functions from $[0,\infty)$ to $\R$ and let $X=(X_t:t\geq 0)$ denote the canonical process on this space. For each $t\geq 0$, let ${\cal F}_t$ denote the $\sigma$-algebra generated by $(X_s:s\le t)$, and let ${\cal F}_\infty$ denote the $\sigma$-algebra generated by $\cup_{t\ge 0} {\cal F}_t$. We write ${\cal F}$ for the filtration $({\cal F}_t:t\ge0)$. Let $P_0$ denote the Wiener measure on $\mathcal{F}_\infty$, that is the unique measure under which $X$ is standard Brownian motion. As usual, we write $E_0$ for the corresponding expectation. Our starting point is the following definition of Brownian penalization:

\begin{definition}[\cite{Profeta}]
\label{def:penalized}
Suppose the $\mathcal{F}$-adapted weight process $\Gamma=(\Ga_t:t\geq 0)$ takes non-negative values and that $0<E_0[\Ga_t]<\infty$ for $t\geq 0$. For each $t\geq 0$, consider the Gibbs probability measure on $\mathcal{F}_\infty$ defined by
\begin{equation}\label{eq:gibbs}
Q_t^\Ga(\La):=\frac{E_0[1_{\La}\Ga_t]}{E_0[\Ga_t]},~\La\in\mathcal{F}_\infty.
\end{equation}
We say that the weight process $\Ga$ satisfies the \textbf{penalization principle for Brownian motion} if there exists a probability measure $Q^\Ga$ on $\mathcal{F}_\infty$ such that:
\begin{equation}\label{eq:limit}
\forall s\geq 0,~\forall\La_s\in\mathcal{F}_s,~~\lim_{t\to\infty}Q_t^\Ga(\La_s)=Q^\Ga(\La_s).
\end{equation}
In this case $Q^\Ga$ is called \textbf{Wiener measure penalized by} $\bm{\Ga}$ or \textbf{penalized Wiener measure} when there is no ambiguity. Similarly, $(X_t:t\geq 0)$ under $Q^\Ga$ is called \textbf{Brownian motion penalized by} $\bm{\Ga}$ or \textbf{penalized Brownian motion}.
\end{definition}

\begin{rem}
Let $P_x$ denote the distribution of Brownian motion starting at $x\in\R$ and $E_x$ its corresponding expectation. By replacing $E_0$ with $E_x$ in \eqref{eq:gibbs}, it is straightforward to modify the above definition of $Q_t^\Ga$ and $Q^\Ga$ to yield the analogous measures $Q_{x,t}^\Ga$ and $Q_x^\Ga$ which account for a general starting point. Since the main focus of our work is on computing scaling limits, and any fixed starting point would be scaled to $0$, we restrict our attention to penalizing $P_0$ for the sake of clarity.
\end{rem}

In many cases of interest, a one-parameter family of weight processes $(\Gamma^\nu:\nu \in {\mathbb R})$ is considered. The parameter $\nu$ allows us to adjust or ``tune" the strength of penalization and plays a role similar to that of the inverse temperature in statistical mechanics models. In this case we write $Q_t^\nu$ and $Q^\nu$ for the corresponding Gibbs and penalized measures, respectively. This notational practice is modified accordingly when dealing with a two-parameter weight process.

A natural choice for $\Ga$ are non-negative functions of the running maximum $S_t=\sup_{s\leq t}X_s$ and these \emph{supremum penalizations} were considered by the aforementioned authors in \cite{supremum}. Their results include as a special case the one-parameter weight process $\Ga_t=\exp(\nu S_t)$ with $\nu<0$ which we briefly describe here. 

\begin{thmL}[Roynette-Vallois-Yor \cite{supremum} Theorem 3.6]\label{thmL:supremum}\ \\
Let $\Ga_t=\exp(\nu S_t)$ with $\nu<0$. Then $\Ga$ satisfies the penalization principle for Brownian motion and the penalized Wiener measure $Q^\nu$ has the representation 
$$Q^\nu(\La_t)=E_0\left[1_{\La_t}M_t^\nu\right],~\forall t\geq 0,~\forall\La_t\in{\cal F}_t$$
where $M^\nu=(M^\nu_t:t\geq 0)$ is a positive $({\cal F},P_0)$-martingale starting at $1$ which has the form
\begin{equation}
M_t^\nu=\exp(\nu S_t)-\nu\exp(\nu S_t)(S_t-X_t).
\end{equation}
\end{thmL}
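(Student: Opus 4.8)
The plan is to combine an explicit Itô-calculus verification that $M^\nu$ is a positive martingale with a direct asymptotic evaluation of the Gibbs ratio $Q_t^\nu(\La_s)$ via the Markov property and Gaussian tail estimates.

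First I would establish the martingale property. Writing $Y_t := S_t - X_t \ge 0$, we have $M_t^\nu = e^{\nu S_t}(1 - \nu Y_t)$. Since $S$ has bounded variation and increases only on the set $\{Y_t = 0\} = \{X_t = S_t\}$, the integration-by-parts formula (no correction term, as $e^{\nu S_t}$ has finite variation) gives
$$dM_t^\nu = (1 - \nu Y_t)\,\nu e^{\nu S_t}\,dS_t - \nu e^{\nu S_t}\,(dS_t - dX_t).$$
On the support of $dS_t$ one has $Y_t = 0$, so the two $dS_t$ contributions cancel and we are left with $dM_t^\nu = \nu e^{\nu S_t}\,dX_t$, a stochastic integral against $X$. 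Because $\nu < 0$ the integrand $\nu e^{\nu S_t}$ is bounded, so $M^\nu$ is a genuine square-integrable martingale on each finite interval; positivity follows from $\nu<0$ and $Y_t\ge0$, and $M_0^\nu = 1$. This shows that $Q^\nu(\La_s) := E_0[1_{\La_s} M_s^\nu]$ defines a consistent family of densities on the $\mathcal{F}_s$, which extends to a probability measure on $\mathcal{F}_\infty$.

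Next I would compute the normalizer and the conditional weight. Since $S_t$ is half-normal, $E_0[e^{\nu S_t}] = 2 e^{\nu^2 t/2}\Phi(\nu\sqrt{t})$, and the Mills-ratio asymptotic $\Phi(-x) \sim \varphi(x)/x$ gives $E_0[e^{\nu S_t}] \sim 2\big(|\nu|\sqrt{2\pi t}\big)^{-1}$. For $t\ge s$, writing $\tau = t-s$ and using $S_t = X_s + \max\!\big(Y_s,\ \sup_{s\le u\le t}(X_u - X_s)\big)$ together with the half-normal law of the increment's running maximum, the Markov property and completing the square yield
$$E_0[e^{\nu S_t}\mid\mathcal{F}_s] = e^{\nu S_s}\big(2\Phi(Y_s/\sqrt\tau) - 1\big) + 2\, e^{\nu X_s}\, e^{\nu^2\tau/2}\Big(1 - \Phi\big((Y_s - \nu\tau)/\sqrt\tau\big)\Big).$$

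Finally I would form the ratio $Q_t^\nu(\La_s) = E_0\big[1_{\La_s}\, E_0[e^{\nu S_t}\mid\mathcal{F}_s]\big]\big/E_0[e^{\nu S_t}]$ and pass to the limit inside the expectation. A term-by-term expansion as $\tau\to\infty$, using $2\Phi(Y_s/\sqrt\tau) - 1 \sim \sqrt{2/\pi}\,Y_s/\sqrt\tau$ for the first piece and the Mills-ratio estimate for the second, shows that the exponential-in-$\tau$ factors cancel exactly against the normalizer and the integrand converges pointwise: the first piece tends to $-\nu e^{\nu S_s}(S_s - X_s)$ and the second to $e^{\nu S_s}$, summing to $e^{\nu S_s}(1 - \nu Y_s) = M_s^\nu$. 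The main obstacle is justifying the interchange of limit and expectation: I would exhibit a $t$-uniform integrable dominating function for the ratio, for which the boundedness $e^{\nu S_s}\le 1$ for $\nu<0$ together with uniform polynomial control of the Mills ratio should suffice. Dominated convergence then delivers $\lim_{t\to\infty} Q_t^\nu(\La_s) = E_0[1_{\La_s} M_s^\nu] = Q^\nu(\La_s)$, which is precisely the penalization principle together with the claimed representation.
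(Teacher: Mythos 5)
The paper itself contains no proof of this statement: it is imported as background (Theorem A) directly from Roynette--Vallois--Yor \cite{supremum}, so there is nothing internal to compare against. Your proposal is essentially the classical argument for supremum penalization and it is correct. The It\^{o} verification is sound: since $dS_t$ is carried by $\{S_t=X_t\}=\{Y_t=0\}$ and $e^{\nu S_t}$ has finite variation, the two $dS_t$ contributions indeed cancel, leaving $dM_t^\nu=\nu e^{\nu S_t}\,dX_t$ with bounded integrand (this is exactly the Az\'{e}ma--Yor structure the paper's Remark 1.2 alludes to, with $F(y)=e^{\nu y}$ and $f=F'$), and the martingale property is what makes the family $\Lambda_s\mapsto E_0[1_{\Lambda_s}M_s^\nu]$ consistent; the extension to $\mathcal{F}_\infty$ then uses the canonical-space extension theorem, which you invoke implicitly and which is standard here. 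Your conditional-expectation formula checks out, writing $E_0[e^{\nu S_t}\mid\mathcal{F}_s]=e^{\nu S_s}\bigl(2\Phi(Y_s/\sqrt{\tau})-1\bigr)+2e^{\nu X_s}e^{\nu^2\tau/2}\bigl(1-\Phi((Y_s-\nu\tau)/\sqrt{\tau})\bigr)$, and so do both asymptotics: against the normalizer $E_0[e^{\nu S_t}]=2e^{\nu^2 t/2}\Phi(\nu\sqrt{t})\sim -\frac{1}{\nu}\sqrt{2/(\pi t)}$ (which matches the $L_1$ row of the paper's Proposition 2.1), the first piece tends to $-\nu e^{\nu S_s}(S_s-X_s)$ and the second to $e^{\nu S_s}$, summing to $M_s^\nu$.

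The one step you leave as an intention rather than a proof, the $t$-uniform domination, does go through along the lines you indicate: from $2\Phi(u)-1\le\sqrt{2/\pi}\,u$ for $u\ge 0$, the two-sided Mills bounds $\frac{x}{1+x^2}\varphi(x)\le 1-\Phi(x)\le\frac{\varphi(x)}{x}$ for $x>0$, and $e^{\nu S_s}\le 1$, one gets for all $t\ge 2s$ a bound of the form $C\bigl(1+(S_s-X_s)\bigr)$ on the ratio $E_0[e^{\nu S_t}\mid\mathcal{F}_s]/E_0[e^{\nu S_t}]$, which is integrable under $P_0$ and independent of $\Lambda_s$, so dominated convergence yields the penalization principle for every $\Lambda_s\in\mathcal{F}_s$ simultaneously. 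In short: your route (explicit conditional Gaussian computation plus dominated convergence, with the martingale identified up front via It\^{o}) is the same strategy as the original Roynette--Vallois--Yor proof, and the sketch is complete once the domination estimate above is written out.
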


\begin{rem}
$M^\nu$ is an example of an \textbf{Az\'{e}ma-Yor martingale}. More generally, 
$$F(S_t)-f(S_t)(S_t-X_t),~t\geq 0$$
is an $({\cal F},P_0)$-local martingale whenever $f$ is locally integrable and $F(y)=\int_0^y f(x)\dd{x}$, see Theorem 3 of \cite{beyond_harmonic}.
\end{rem}

\no Let $S_\infty=\lim_{t\to\infty}S_t$ which exists in the extended sense due to monotonicity. Then the measure $Q^\nu$ can be further described in terms of a path decomposition at $S_\infty$.

\begin{thmL}[Roynette-Vallois-Yor \cite{supremum} Theorem 4.6]\label{thmL:path_decomp}\ \\
Let $\nu<0$ and $Q^\nu$ be as in \autoref{thmL:supremum}. 
\begin{enumerate}
\item Under $Q^\nu$, the random variable $S_\infty$ is exponential with parameter $-\nu$.
\item Let $g=\sup\{t\geq 0:X_t=S_\infty\}$. Then $Q^\nu(0<g<\infty)=1$, and under $Q^\nu$ the following hold:
			\begin{enumerate}
			\item the processes $(X_t:t\leq g)$ and $(X_g-X_{g+t}:t\geq 0)$ are independent,
			\item $(X_g-X_{g+t}:t\geq 0)$ is distributed as a Bessel(3) process starting at $0$,
			\item conditionally on $S_\infty =y>0$, the process $(X_t:t\leq g)$ is distributed as a Brownian motion started at $0$ and stopped when it first hits the level $y$.
			\end{enumerate}
\end{enumerate}
\end{thmL}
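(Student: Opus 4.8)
The plan is to leverage the martingale representation of \autoref{thmL:supremum} together with the strong Markov property of $P_0$ at the first-passage times $T_y:=\inf\{t\ge 0:X_t=y\}$, $y>0$. The single observation that drives everything is that $M^\nu$ takes a deterministic value along these stopping times: since $S_{T_y}=X_{T_y}=y$ on $\{T_y<\infty\}$, the Az\'era--Yor form gives $M^\nu_{T_y}=\exp(\nu y)$. For part 1, I would note that $\{S_\infty\ge y\}=\{T_y<\infty\}$ and apply optional sampling to $T_y\wedge t$, using $Q^\nu|_{\mathcal F_t}=M^\nu_t\,P_0$; letting $t\to\infty$ and invoking $P_0(T_y<\infty)=1$ together with the continuity of the probability measure $Q^\nu$ along $\{T_y\le t\}\uparrow\{T_y<\infty\}$ yields $Q^\nu(S_\infty\ge y)=E_0[1_{\{T_y<\infty\}}M^\nu_{T_y}]=\exp(\nu y)$, i.e. $S_\infty\sim\mathrm{Exp}(-\nu)$.

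The engine of the decomposition is a shift identity for the martingale. Writing $B_u:=X_{T_y+u}-y$ for the post-$T_y$ process, one has $S_{T_y+u}=y+\bar B_u$ with $\bar B_u=\sup_{v\le u}B_v\ge 0$, so that $M^\nu_{T_y+u}=\exp(\nu y)\big(\exp(\nu\bar B_u)-\nu\exp(\nu\bar B_u)(\bar B_u-B_u)\big)$, and the bracket is exactly $M^\nu_u$ evaluated on the path $B$. Feeding this into the density relation, applying optional sampling at $T_y+u$, and using that under $P_0$ the process $B$ is a standard Brownian motion independent of $\mathcal F_{T_y}$, I would obtain for $\Lambda\in\mathcal F_{T_y}$ and bounded functionals $G$
\[
E_{Q^\nu}\big[1_\Lambda\,1_{\{T_y<\infty\}}\,G(B)\big]=\exp(\nu y)\,P_0(\Lambda)\,E_{Q^\nu}[G(X)].
\]
This says that, conditionally on $\{S_\infty\ge y\}$, the pre-$T_y$ path is a standard Brownian motion stopped at $T_y$, it is independent of the post-$T_y$ path, and the post-$T_y$ path $B$ is itself a copy of penalized Brownian motion from $0$; in particular $S_\infty-y=\sup_u B_u$ is independent of $\mathcal F_{T_y}$ and $\mathrm{Exp}(-\nu)$, consistent with the memorylessness forced by part 1.

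Parts 2(a) and 2(c) then follow by disintegration. Since, given $\{T_y<\infty\}$, the event $\{S_\infty=y\}$ depends only on the post-$T_y$ path, the pre-$T_y$ path conditioned on $\{S_\infty=y\}$ retains the law of Brownian motion stopped at $T_y$ and is independent of the post-$T_y$ (hence post-$g$) fragment. Granting that $g=T_y$ on $\{S_\infty=y\}$ — which follows once the descent is shown to be transient — this is precisely 2(a) and 2(c), with the conditioning on the null event $\{S_\infty=y\}$ made rigorous by disintegrating the displayed identity in $y$ against the law $-\nu\exp(\nu y)\,dy$ of $S_\infty$.

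The remaining and hardest point is part 2(b): identifying the post-$g$ process $(X_g-X_{g+t}:t\ge0)=(S_\infty-X_{g+t}:t\ge0)$ as a Bessel$(3)$ process from $0$. This is exactly the terminal, infinitely long excursion of the reflected process $(S_t-X_t:t\ge0)$ above $0$, since for $t>g$ one has $S_t=S_\infty$ and $S_t-X_t>0$. By the previous step it reduces to describing, for a single penalized Brownian motion from $0$, the segment following the last visit to its maximum on the event that the maximum is never exceeded. I expect this to be the main obstacle, since the Bessel$(3)$ law emerges only in the limit of a null conditioning. I would handle it either by excursion theory — using L\'evy's identity to view $S_t-X_t$ as reflecting Brownian motion with local time $S_t$ at $0$, so that the $e^{\nu S}$-penalization turns the final excursion into the transient one, whose law is that of a BES$(3)$ — or by an explicit $h$-transform, conditioning on $\{y\le S_\infty<y+\epsilon\}$ and passing to the limit $\epsilon\downarrow0$, where the harmonic function governing ``staying below the level'' produces the Bessel$(3)$ generator and simultaneously forces the descent to be transient, thereby justifying $g=T_y$ used above.
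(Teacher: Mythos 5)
A preliminary remark: the paper does not prove this statement; it is imported verbatim as Theorem B from Roynette--Vallois--Yor \cite{supremum}, so your attempt can only be judged on its own merits and against the original argument. On those merits, roughly half of your proposal is sound. Part 1 is correct: optional sampling of $M^\nu$ at $T_y\wedge t$, the key observation $M^\nu_{T_y}=e^{\nu y}$, the fact that $P_0(T_y<\infty)=1$, and countable additivity of $Q^\nu$ give $Q^\nu(S_\infty\ge y)=e^{\nu y}$. The multiplicative factorization $M^\nu_{T_y+u}=e^{\nu y}\,M^\nu_u(B)$ is also correct, and combined with the strong Markov property and a monotone class argument it does yield your displayed regeneration identity; this cleanly gives the splitting of the path at $T_y$, i.e.\ 2(c) and the independence statement with $T_y$ in place of $g$.

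The genuine gap is 2(b), and it infects 2(a) and 2(c) as stated, because the theorem splits the path at $g$, not at $T_{S_\infty}$; your identification $g=T_{S_\infty}$ rests on transience of the post-maximum fragment, which is part of what 2(b) asserts, and you acknowledge this circularity without resolving it. The regeneration identity cannot close the gap by itself: it says the post-$T_y$ path is a fresh $Q^\nu$-copy, so describing that copy after the last visit to its own maximum is the identical problem over again, and iterating buys nothing. Of your two suggested routes, the excursion-theoretic one glosses the crux: L\'evy's equivalence \eqref{eq:Levy} holds under $P_0$, not under $Q^\nu$, and the It\^o excursion measure assigns zero mass to infinite excursions, so ``the transient excursion, whose law is that of a BES(3)'' is precisely the object whose existence and identification must be proved, not quoted. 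The $h$-transform route is viable and is essentially how the result is established in \cite{supremum} and \cite{Penalising}, but the decisive computation is absent from your sketch: one decomposes $M^\nu_t=e^{\nu S_t}+(-\nu)e^{\nu S_t}(S_t-X_t)$ and verifies $Q^\nu(g>t\mid\mathcal{F}_t)=e^{\nu S_t}/M^\nu_t$, so that on $\{g\le t\}$ the measure $Q^\nu$ has $\mathcal{F}_t$-density proportional to $e^{\nu S_t}(S_t-X_t)$; since $S$ is frozen after $g$, the factor $S_t-X_t$ is the harmonic function $x\mapsto S_\infty-x$ of Brownian motion killed at the level $S_\infty$, and its Doob transform started from $S_\infty$ is $S_\infty$ minus a Bessel(3) process. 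That single computation simultaneously proves 2(b) and the transience needed for $g=T_{S_\infty}$, after which your part 1 and your regeneration identity finish 2(a) and 2(c) by disintegration exactly as you describe. Until it (or the $\ep\searrow 0$ conditioning limit, carried out explicitly) is supplied, your proposal is a correct proof of part 1 and a reduction, not a proof, of part 2.
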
 

More recently, penalization has been studied for processes other than Brownian motion. In this case, the measure of the underlying processes is referred to as the \emph{reference measure}. Similar supremum penalization results have been attained for simple random walk \cite{Debs1,Debs2}, stable processes \cite{stable_pen1,stable_pen2}, and integrated Brownian motion \cite{Profeta}. This paper builds upon the work of Roynette-Vallois-Yor that appeared in \cite{long_bridges} where they penalized Brownian motion using the two-parameter weight process ${\Ga_t=\exp(\nu S_t+hX_t)}$ with $\nu,h\in\R$. An easy application of Girsanov's theorem shows that this is equivalent to using the weight process $\Ga_t=\exp(\nu S_t)$ with $\nu\in\R$ and replacing the reference measure $P_0$ by the distribution of Brownian motion with drift $h$, henceforth referred to as $P_0^h$. Now their two-parameter penalization can also be seen as supremum penalization of Brownian motion with drift. In this model, the two parameters $\nu$ (for penalization) and $h$ (for drift) can have competing effects, leading to interesting phase transitions. That is to say the penalized Wiener measure is qualitatively different depending on where $(\nu,h)$ lies in the parameter plane. To describe the resulting phases, we first partition the parameter plane into six disjoint regions (the origin is excluded to avoid trivialities).

\no\begin{minipage}{0.5\textwidth}
\begin{itemize} 
\item $L_1=\{(\nu,h):\nu<0,~h=0\}$;
\item $R_1=\{(\nu,h):h<-\nu,~h>0\}$;
\item $L_2=\{(\nu,h):h=-\nu,~\nu<0\}$; 
\item $R_2=\{(\nu,h):h>-\nu,~h>-\frac{1}{2}\nu\}$;
\item $L_3=\{(\nu,h):h=-\frac{1}{2}\nu,~\nu>0\}$; and 
\item $R_3=\{(\nu,h):h<0,~h<-\frac{1}{2}\nu\}$. 
\end{itemize} 
\end{minipage}%
\begin{minipage}{0.5\textwidth}
\centering
\captionsetup{type=figure}
\captionof{figure}{phase diagram}
\label{fig:diagram}
\begin{tikzpicture}[scale=0.9]
\begin{axis}[axis lines = middle, clip=false, xtick=\empty, ytick=\empty, ymin=-2]
\addplot[sharp plot, line width=2pt] coordinates{(0,0) (-2.2,0)};
\addplot[sharp plot, line width=1.8pt] coordinates{(0,0) (-2,2)};
\addplot[sharp plot, line width=1.8pt] coordinates{(0,0) (2,-1)};
\node[label={[font=\large]340:{$\nu$}}] at (axis cs:1.6,0) {};
\node[label={[font=\large]340:{$h$}}] at (axis cs:-.4,2.0) {};
\node[label={[font=\large]180:{$R_1$}}] at (axis cs:-1,.55) {};
\node[label={[font=\large]180:{$R_3$}}] at (axis cs:-.3,-1.3) {};
\node[label={[font=\large]0:{$R_2$}}] at (axis cs:.3,1) {};
\node[label={[font=\large]340:{$L_3$}}] at (axis cs:1.9,-.9) {};
\node[label={[font=\large]135:{$L_2$}}] at (axis cs:-1.9,1.9) {};
\node[label={[font=\large]180:{$L_1$}}] at (axis cs:-2.1,0) {};
\end{axis}
\end{tikzpicture}
\end{minipage}

\begin{thmL}[Roynette-Vallois-Yor \cite{long_bridges} Theorem 1.7]\label{thmL:2parameters}\ \\
Let $\Ga_t=\exp(\nu S_t+hX_t)$ with $\nu,h\in\R$. For $\nu<0$, let $Q^\nu$ be as in \autoref{thmL:supremum}. Then $\Ga$ satisfies the penalization principle for Brownian motion and the penalized Wiener measure $Q^{\nu,h}$ has three phases which are given by
$$\dd{Q}^{\nu,h}=\left\{
  \begin{array}{ll}
    \dd{Q}^{\nu+h} & : (\nu,h)\in L_1\cup R_1\\
		\\
    \dd{P}_0^{\nu+h} & : (\nu,h)\in L_2\cup R_2\cup L_3\\ 
		\\
    \frac{\nu+2h}{2h}\exp(\nu S_\infty)\dd{P}_0^h & : (\nu,h)\in R_3.\\ 		
  \end{array}
  \right.$$
\end{thmL}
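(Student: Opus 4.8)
The plan is to reduce the two-parameter problem to a one-parameter supremum penalization of drifted Brownian motion, to analyze the resulting Radon--Nikodym ratio by an explicit asymptotic computation, and then to invoke Scheff\'e's lemma to upgrade pointwise convergence to the penalization principle. First I would apply Girsanov's theorem as indicated in the text: since $\dd{P}_0^h|_{\mathcal F_t}=\exp\!\big(hX_t-\tfrac12 h^2t\big)\dd{P}_0|_{\mathcal F_t}$, the factor $e^{h^2t/2}$ is common to the numerator and denominator of $Q_t^{\nu,h}(\La_s)=E_0[1_{\La_s}\Ga_t]/E_0[\Ga_t]$, so that for every $s\ge 0$ and $\La_s\in\mathcal F_s$,
$$Q_t^{\nu,h}(\La_s)=\frac{E_0^h[1_{\La_s}\exp(\nu S_t)]}{E_0^h[\exp(\nu S_t)]}.$$
Writing $D_t:=E_0^h[\exp(\nu S_t)]$ and $N_t:=E_0^h[\exp(\nu S_t)\mid\mathcal F_s]$, I would use the Markov property at time $s$ together with $S_t=\max(S_s,\,X_s+\tilde S_{t-s})$, where $\tilde S_r=\sup_{u\le r}(X_{s+u}-X_s)$ is the running maximum of a fresh drift-$h$ Brownian motion, to obtain the exact decomposition
$$N_t=e^{\nu S_s}\,P_0^h(\tilde S_{t-s}\le c)+e^{\nu X_s}\,E_0^h\big[\exp(\nu\tilde S_{t-s})\,;\,\tilde S_{t-s}>c\big],\qquad c:=S_s-X_s\ge 0.$$
Since $N_t$ is a deterministic function of $(S_s,X_s)$, the ratio $R_t^s:=N_t/D_t$ will converge $P_0^h$-almost surely once one establishes, for each fixed pair $(a,x)$ with $a\ge x$, the limit of the corresponding deterministic expression.

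The analytic core is then the large-time behavior of $D_t$, $P_0^h(\tilde S_r\le c)$ and $E_0^h[\exp(\nu\tilde S_r);\tilde S_r>c]$. I would insert the classical first-passage formula for drift-$h$ Brownian motion, which gives the density $P_0^h(\tilde S_r\in\dd{y})/\dd{y}=\tfrac{2}{\rt r}\,\phi\!\big(\tfrac{y-hr}{\rt r}\big)-2h\,e^{2hy}\,\Phi\!\big(\tfrac{-y-hr}{\rt r}\big)$ on $y>0$, and compute the resulting Gaussian integrals in closed form. Completing the square shows that the ``Gaussian'' contribution to $D_t$ equals $2\exp\!\big(\tfrac12\nu(\nu+2h)t\big)\,\Phi\big((\nu+h)\rt t\big)$, while the ``reflected'' contribution is governed by $\int_0^\infty e^{(\nu+2h)y}\Phi\!\big(\tfrac{-y-hr}{\rt r}\big)\dd{y}$; the quantities $\nu+h$ and $\nu+2h$ appearing here are exactly the thresholds defining the six regions, and tracking which contribution dominates produces the phase transitions. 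I expect this case analysis to be the main obstacle, with the two critical lines $L_2$ (where $\nu+h=0$) and $L_3$ (where $\nu+2h=0$) the most delicate, since there the leading exponential rates of the two contributions coincide or degenerate and one must retain the polynomial and constant corrections, as well as the boundary value $\Phi\big((\nu+h)\rt t\big)\to\tfrac12$.

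Carrying this out, I anticipate three outcomes for the almost sure limit $g(S_s,X_s)$ of $R_t^s$. On $L_1\cup R_1$, where $\nu+h<0$, the reflected contribution dominates and yields $g(S_s,X_s)=\tfrac{\dd{Q}^{\nu+h}}{\dd{P}_0^h}\big|_{\mathcal F_s}$, i.e.\ (after undoing Girsanov) the Az\'ema--Yor martingale $M_s^{\nu+h}$ of \autoref{thmL:supremum}; on $L_2\cup R_2\cup L_3$ the Gaussian contribution dominates and gives $g(S_s,X_s)=\exp\!\big(\nu X_s-\tfrac12\nu(\nu+2h)s\big)=\tfrac{\dd{P}_0^{\nu+h}}{\dd{P}_0^h}\big|_{\mathcal F_s}$. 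In each of these cases the candidate limit satisfies $E_0^h[g(S_s,X_s)]=1$, being a genuine density, so Scheff\'e's lemma promotes the almost sure convergence of $R_t^s$ to convergence in $L^1(P_0^h)$; hence $Q^{\nu,h}(\La_s)=E_0^h[1_{\La_s}g(S_s,X_s)]$ exists and equals $Q^{\nu+h}(\La_s)$ and $P_0^{\nu+h}(\La_s)$, respectively.

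Finally, the region $R_3$ (where $h<0$ and $\nu+2h<0$) is handled directly rather than asymptotically. Here $S_t\uparrow S_\infty$, which is exponential with parameter $-2h$ under $P_0^h$, so that $E_0^h[\exp(\nu S_\infty)]=\tfrac{2h}{\nu+2h}<\infty$. Dominating $\exp(\nu S_t)$ by $1$ (if $\nu\le 0$) or by the integrable random variable $\exp(\nu S_\infty)$ (if $\nu>0$), dominated convergence gives $Q_t^{\nu,h}(\La_s)\to E_0^h[1_{\La_s}\exp(\nu S_\infty)]/E_0^h[\exp(\nu S_\infty)]$, which is precisely $\tfrac{\nu+2h}{2h}\exp(\nu S_\infty)\dd{P}_0^h$. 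Together these four computations establish the penalization principle and identify the three phases.
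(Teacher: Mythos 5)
You should first note that the paper itself does not prove this statement: it is Theorem~\ref{thmL:2parameters}, imported verbatim from Roynette--Vallois--Yor \cite{long_bridges}, so there is no in-paper proof to compare against. Judged on its own merits, your proposal is sound and follows the standard penalization scheme (Girsanov reduction, Markov decomposition of the conditional weight, pointwise asymptotics of the ratio $N_t/D_t$, Scheff\'e to upgrade to $L^1$ and hence to the penalization principle). Your decomposition $N_t=e^{\nu S_s}P_0^h(\tilde S_{t-s}\le c)+e^{\nu X_s}E_0^h[\exp(\nu\tilde S_{t-s});\tilde S_{t-s}>c]$ is exact, your density for $\tilde S_r$ and the computation of the Gaussian block $2\exp(\tfrac12\nu(\nu+2h)t)\Phi((\nu+h)\rt{t})$ check out, the identification of the limits in the three phases is correct (in $L_1\cup R_1$ both terms of $N_t$ contribute at the same order, and the sum reassembles exactly the two-term Az\'ema--Yor density $M_s^{\nu+h}$ of \autoref{thmL:supremum}, as one verifies directly when $h=0$), and your direct dominated-convergence treatment of $R_3$ is clean and in fact yields convergence for all $\La\in\mathcal{F}_\infty$, not just $\La_s\in\mathcal{F}_s$. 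The resulting partition-function asymptotics agree with \autoref{prop:asymptotics} after multiplying by $e^{h^2t/2}$.

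Two of your anticipations about the critical lines are, however, stated incorrectly, and you should fix them before executing the plan. First, on $L_3$ (where $\nu+2h=0$, $h<0$) it is the \emph{reflected} contribution that dominates, not the Gaussian one: the Gaussian block tends to the constant $2\Phi(-h\rt{t})\to 2$, while $-2h\int_0^\infty\Phi\bigl(\tfrac{-y-ht}{\rt{t}}\bigr)\dd{y}\sim 2h^2t$, consistent with the $L_3$ entry of \autoref{prop:asymptotics}; the limit $g=e^{\nu X_s}$ still emerges, but via $N_t\approx e^{\nu X_s}D_{t-s}$ with $D_{t-s}/D_t\to 1$, not via Gaussian dominance. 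Second, on $L_2$ (where $\nu+h=0$, $h=-\nu>0$) the two contributions share the exponential order $e^{-h^2t/2}$ and their leading terms cancel \emph{exactly}, leaving $D_t=2\Phi(-h\rt{t})\sim\tfrac{1}{h}\rt{\tfrac{2}{\pi t}}\,e^{-h^2t/2}$; consequently, showing that the first term $e^{\nu S_s}P_0^h(\tilde S_{t-s}\le c)$ of $N_t$ is negligible cannot be done by comparing exponential rates alone, since $P_0^h(\tilde S_r\le c)$ also decays like $e^{-h^2r/2}$ --- one needs the finer estimate $P_0^h(\tilde S_r\le c)=O\bigl(r^{-3/2}e^{-h^2r/2}\bigr)$ (itself a cancellation of the two leading terms in the first-passage formula) to beat $D_t\asymp t^{-1/2}e^{-h^2t/2}$ by a factor $1/t$. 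Both repairs stay entirely within your framework, so I regard the proposal as a correct outline with these two local corrections rather than as having a structural gap.
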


\begin{rem}
While \autoref{thmL:2parameters} had only three phases hence needing only three regions in the parameter plane, the rationale behind our choice of six regions in the phase diagram \autoref{fig:diagram} will become clear when we state our main results.
\end{rem}

One topic that has attracted interest is the following: to what extent can sets $\La\in\mathcal{F}_\infty$ replace the sets $\La_s\in\mathcal{F}_s$ in the limit \eqref{eq:limit} which defines penalization? This can't be done in complete generality, e.g. the case of supremum penalization from \autoref{thmL:path_decomp}. Here we saw that $S_\infty$ is exponentially distributed under $Q^\nu$, yet it is easy to see that for each $t\geq 0$, $S_\infty=\infty$ almost surely under $Q^\nu_t$. The last chapter of \cite{Penalising} is devoted to studying this question. Intuitively speaking, the effect of penalizing by $\Ga_t$ on the probability of an event $\La_s\in\mathcal{F}_s$ should be more pronounced the closer $s$ is to $t$. Hence letting $s$ keep pace with $t$ as $t\to\infty$ instead of having $s$ remain fixed might result in a different outcome. This leads to the notion of \emph{scaled Brownian penalizations}. Roughly speaking, scaled penalization amounts to penalizing and scaling simultaneously. To be more specific, we first introduce the family of scaled processes $X^{\alpha,t} = (X_s^{\al,t}:0\leq s\leq 1)$ indexed by $t>0$ with scaling exponent $\alpha\ge 0$ where $X_s^{\al,t}:=X_{st}/t^\al$. Now with a weight process $\Ga$ that satisfies the penalization principle for Brownian motion and the right choice of $\al$, we then compare the weak limits of $X^{\alpha,t}$ under $Q^\Ga|_{\mathcal{F}_t}$ (penalizing then scaling) and under $Q_t^\Ga|_{\mathcal{F}_t}$ (scaling and penalizing simultaneously) as $t\to\infty$. Next we give an example of such a result for supremum penalization.

\begin{thmL}[Roynette-Yor \cite{Penalising} Theorem 4.18]\label{thmL:scaled}\ \\
Let $(R_s:0\leq s\leq 1)$ and $(\mk_s:0\leq s\leq 1)$ denote a Bessel(3) process and Brownian meander, respectively. Let $\Ga_t=\exp(\nu S_t)$ with $\nu<0$ and $Q^\nu$ be the penalized Wiener measure. Then the distribution of $X^{\frac{1}{2},t}$ under $Q^\nu|_{\mathcal{F}_t}$ and $Q_t^\nu|_{\mathcal{F}_t}$ converges weakly to $(-R_s:0\leq s\leq 1)$ and $(-\mk_s:0\leq s\leq 1)$, respectively, as $t\to\infty$.
\end{thmL}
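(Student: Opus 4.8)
The plan is to treat the two convergences separately. For both, the starting observation is that Brownian scaling is exact: under $P_0$ the scaled process $X^{\frac12,t}=(X_{st}/\sqrt t:0\le s\le1)$ is itself a standard Brownian motion on $[0,1]$, and $S_t=\sqrt t\,\bar S_t$ where $\bar S_t:=\sup_{0\le s\le1}X^{\frac12,t}_s$. This lets me rewrite both Gibbs expectations purely in terms of the scaled path. For the simultaneous case I will also use the martingale representation $M^\nu_t=\exp(\nu S_t)\bigl(1-\nu(S_t-X_t)\bigr)$ from \autoref{thmL:supremum} to compare $Q_t^\nu$ with $Q^\nu$ directly. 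The two limit processes will then differ precisely by the \emph{Imhof density} relating the meander to Bessel(3), which is the conceptual heart of the result.

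\textbf{Penalize then scale ($-R$).} Here I would argue pathwise using the decomposition of \autoref{thmL:path_decomp}. Under $Q^\nu$ we have $0<g<\infty$ and $S_\infty<\infty$ almost surely, and $X_{g+s}=S_\infty-R_s$ for a Bessel(3) process $R$ started at $0$. Hence for $st\ge g$,
\[
X^{\frac12,t}_s=\frac{X_{st}}{\sqrt t}=\frac{S_\infty}{\sqrt t}-\frac{R_{st-g}}{\sqrt t}.
\]
Introducing $R^{(t)}_s:=R_{st}/\sqrt t$, which by the scaling invariance of Bessel(3) satisfies $R^{(t)}\overset{d}{=}R$ on $[0,1]$, I would show $\bigl\|X^{\frac12,t}-(-R^{(t)})\bigr\|_\infty\to0$ in $Q^\nu$-probability: the deterministic shift $S_\infty/\sqrt t\to0$, the time shift $g/t\to0$ is absorbed by uniform continuity of $R$ on compacts, and on the shrinking initial window $\{s\le g/t\}$ both $X^{\frac12,t}_s$ (controlled by $\sup_{u\le g}|X_u|/\sqrt t$) and $R^{(t)}_s$ tend to $0$ uniformly. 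Since $-R^{(t)}\overset{d}{=}-R$, this gives $X^{\frac12,t}\Rightarrow -R$ under $Q^\nu$.

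\textbf{Scale and penalize ($-\mk$).} From \autoref{thmL:supremum} the relative density on $\mathcal F_t$ is
\[
\frac{\dd Q_t^\nu}{\dd Q^\nu}\Big|_{\mathcal F_t}
=\frac{\exp(\nu S_t)/E_0[\exp(\nu S_t)]}{M_t^\nu}
=\frac{1}{E_0[\exp(\nu S_t)]\,\bigl(1-\nu(S_t-X_t)\bigr)}.
\]
Writing $\la:=-\nu\sqrt t\to\infty$ and using $S_t\overset{d}{=}\sqrt t\,|Z|$ with $Z$ standard normal gives $E_0[\exp(\nu S_t)]\sim\sqrt{2/(\pi t)}/(-\nu)$, while $1-\nu(S_t-X_t)=1+\la\bigl(\bar S_t-X^{\frac12,t}_1\bigr)$ with $\bar S_t-X^{\frac12,t}_1\to R_1$ under the $-R$ limit of the previous step. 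Combining, the relative density converges to $\sqrt{\pi/2}\,/R_1$. Realizing the convergence $X^{\frac12,t}\Rightarrow-R$ via Skorokhod representation and applying the continuous mapping theorem (valid on $\{R_1>0\}$, a full-measure set) yields the joint convergence of $\bigl(X^{\frac12,t},\,\dd Q_t^\nu/\dd Q^\nu\bigr)$ to $\bigl(-R,\ \sqrt{\pi/2}\,R_1^{-1}\bigr)$ under $Q^\nu$. Consequently the limit law of $X^{\frac12,t}$ under $Q_t^\nu$ is that of $-R$ reweighted by $\sqrt{\pi/2}\,R_1^{-1}$; by Imhof's relation, $\sqrt{\pi/2}\,R_1^{-1}$ is exactly the density of the Brownian meander with respect to Bessel(3) on $[0,1]$, so the reweighted law is that of $-\mk$.

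\textbf{Main obstacle.} The delicate point is justifying the tilting limit, since the functional $\dd Q_t^\nu/\dd Q^\nu$ is \emph{unbounded}: it blows up as $S_t-X_t\to0$, i.e.\ when the path sits near its running maximum at time $t$. I would therefore establish uniform integrability under $Q^\nu$ by bounding $\sup_t E_{Q^\nu}\bigl[(\dd Q_t^\nu/\dd Q^\nu)^{1+\delta}\bigr]$ for some $\delta\in(0,2)$. Using that the law of $\bar S_t-X^{\frac12,t}_1$ under $Q^\nu$ has the Bessel(3) small-value profile ($\sim r^2$ near $0$) and the substitution $u=\la r$, the $(1+\delta)$-moment stays bounded precisely for $\delta<2$, the integrability of $R_1^{-1}$ against the $r^2$ density being the structural reason the limiting weight is normalized. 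With this moment bound, Vitali's theorem upgrades the joint weak convergence to convergence of $E_{Q_t^\nu}[F(X^{\frac12,t})]=E_{Q^\nu}\bigl[(\dd Q_t^\nu/\dd Q^\nu)\,F(X^{\frac12,t})\bigr]$ to $E\bigl[\sqrt{\pi/2}\,R_1^{-1}F(-R)\bigr]=E[F(-\mk)]$ for bounded continuous $F$, completing the argument. This uniform-integrability estimate is the crux; everything else is scaling and the cited path decomposition.
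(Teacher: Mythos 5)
A preliminary remark: the paper does not prove this statement at all --- it is Theorem D, quoted from Roynette--Yor \cite{Penalising} (Theorem 4.18) and used as an input (e.g.\ in the $L_2$ proof via duality). So your proposal cannot match ``the paper's proof''; it can only be compared with the techniques the paper deploys for its analogous rows. Judged on its own terms, your argument is correct and takes a genuinely different route. For the $-R$ limit you exploit the $Q^\nu$ path decomposition of \autoref{thmL:path_decomp} to build an explicit coupling with a rescaled Bessel(3) copy $R^{(t)}$, obtaining sup-norm convergence in probability --- stronger than the weak convergence claimed, and cleaner than a density computation. For the $-\mk$ limit you tilt $Q^\nu$ by the exact $\mathcal{F}_t$-density $D_t=\big(E_0[e^{\nu S_t}]\,(1-\nu(S_t-X_t))\big)^{-1}$ coming from the Az\'ema--Yor martingale of \autoref{thmL:supremum}, identify the limiting weight $\rt{\pi/2}\,R_1^{-1}$, and close with Imhof's relation \eqref{eq:Imhof}. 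This change-of-measure strategy makes conceptually transparent \emph{why} the two limits differ by exactly the Imhof density; by contrast, the computational style the paper uses for its own analogous results (the $R_1$ and $L_3$ cases) runs through explicit joint densities, disintegration into bridges, Watson's lemma/Laplace asymptotics, and the Fatou-type \autoref{lem:Fatou}. What your route costs is the uniform integrability of the unbounded weight $D_t$, which you correctly isolate as the crux.

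That crux, as written, is only gestured at, but it does check out, and here is the computation your sketch needs: with $c_t=E_0[e^{\nu S_t}]\sim\rt{2/\pi}\,\lambda^{-1}$ and $\lambda=-\nu\rt{t}$, one has $E_{Q^\nu}\big[D_t^{1+\delta}\big]=c_t^{-(1+\delta)}E_0\big[e^{\nu S_t}\big(1-\nu(S_t-X_t)\big)^{-\delta}\big]$, and using the joint density $P_0(S_1\in\dd{x},\,S_1-X_1\in\dd{y})=\rt{2/\pi}\,(x+y)\exp\big(-\tfrac{(x+y)^2}{2}\big)\dd{x}\dd{y}$ together with the substitution $(u,v)=(\lambda x,\lambda y)$,
\begin{equation*}
E_0\Big[e^{\nu S_t}\big(1-\nu(S_t-X_t)\big)^{-\delta}\Big]
=\rt{\frac{2}{\pi}}\,\frac{1}{\lambda^{3}}\int_0^\infty\!\!\int_0^\infty e^{-u}(1+v)^{-\delta}(u+v)\,e^{-\frac{(u+v)^2}{2\lambda^2}}\dd{u}\dd{v}
\asymp \lambda^{-3}\cdot\lambda^{2-\delta}
\end{equation*}
for $0<\delta<2$, since the $v$-integral saturates at the Gaussian cutoff $v\asymp\lambda$. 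Hence $\sup_t E_{Q^\nu}[D_t^{1+\delta}]=O(\lambda^{1+\delta}\cdot\lambda^{-1-\delta})=O(1)$, confirming your threshold $\delta<2$ (matching $E[R_1^{-(1+\delta)}]<\infty$ iff $\delta<2$, and failing logarithmically at $\delta=2$). Two small simplifications: the Skorokhod representation is unnecessary, since your Part 1 coupling already lives on the $Q^\nu$ space jointly with $D_t$, and $|D_t-\rt{\pi/2}\,(R_1^{(t)})^{-1}|\to 0$ in $Q^\nu$-probability on events where $R_1^{(t)}$ is bounded away from $0$, which suffices for the joint convergence; and the conservation of mass $\rt{\pi/2}\,E_0[R_1^{-1}]=1$ is worth recording as the sanity check that no mass escapes under your UI bound.
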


\no Notice how the scaling limits are similar (their path measures are mutually absolutely continuous by the Imhof relation \eqref{eq:Imhof}) yet at the same time quite different (the Bessel(3) process is a time-homogenous Markov process while the Brownian meander is a time-inhomogeneous Markov process). Since $Q^\nu$ is just a special case of $Q^{\nu,h}$, a natural question is whether results similar to \autoref{thmL:scaled} can be proven for the two-parameter model from \autoref{thmL:2parameters}. This is the primary goal of this paper.

\subsection{Main Results}
Our main results lie in computing the weak limits of $X^{\alpha,t}$ under $Q^{\nu,h}|_{\mathcal{F}_t}$ and $Q_t^{\nu,h}|_{\mathcal{F}_t}$ as $t\to\infty$ for all $(\nu,h)\in\R^2$ with appropriate $\al$. This extends the scaled penalization result of \autoref{thmL:scaled} to the whole parameter plane of the two-parameter model from \autoref{thmL:2parameters}. In doing so, we reveal two additional ``critical" phases. These new phases correspond to the parameter rays $L_2$ and $L_3$ which occur at the interfaces of the other regions,  see  \autoref{fig:diagram}. We call the two novel processes corresponding to these critical phases the \emph{Brownian ascent} and the \emph{up-down process}: 
\begin{itemize} 
\item Loosely speaking, the Brownian ascent $(\ak_s:0\leq s\leq 1)$ is a Brownian path of duration $1$ conditioned to end at its maximum, i.e. conditioned on the event $\{X_1=S_1\}$. It can be represented as a path transformation of the Brownian meander, see \autoref{sec:ascent}. While the sobriquet \emph{ascent} is introduced in the present paper, this transformed meander process first appeared in \cite{fp_bridge} in the context of Brownian \emph{first passage bridge}. More recently, the same transformed meander has appeared in \cite{pseudo_msj}, although no connection is made in that paper to \cite{fp_bridge} or Brownian motion conditioned to end at its maximum.

\item The up-down process is a random mixture of deterministic up-down paths that we now describe. For $0\leq \Te\leq 1$, let $\uk^\Te=(\uk_s^\Te:0\leq s\leq 1)$ be the continuous path defined by 
$$\uk_s^\Te=\Te-|s-\Te|,~0\leq s\leq 1.$$
It is easy to see that $\uk^\Te$ linearly interpolates between the points $(0,0)$, $(\Te,\Te)$, and $(1,2\Te-1)$ when $0<\Te<1$, while $\uk^0$ or $\uk^1$ is simply the path of constant slope $-1$ or $1$, respectively. To simplify notation, we write $\uk$ instead of $\uk^1$. The up-down process with slope $h$ is defined as the process $h\uk^U$ where $U$ is a Uniform$[0,1]$ random variable. Roughly speaking, the up-down process of slope $h$ starts at $0$ and moves with slope $h$ until a random Uniform$[0,1]$ time $U$ then it moves with slope $-h$ for the remaining time $1-U$. In our results the initial slope is always positive, hence the name up-down. 
\end{itemize} 

\begin{thm}\label{thm:theorem_1}
Let $W=(W_s:0\leq s\leq 1)$ and $R=(R_s:0\leq s\leq 1)$ be Brownian motion and Bessel(3) processes, each starting at $0$. Let $\ak$ be a Brownian ascent, $\mk$ be a Brownian meander, and $\ek$ be a normalized Brownian excursion. Let $\uk^U$ be the up-down process with $U$ a Uniform$[0,1]$ random variable. Then the weak limits of $X^{\alpha,t}$ as $t\to\infty$ are as follows:
\begin{center}
\resizebox{\textwidth}{!}{
\begin{tabular}{|c|c|c|c|c|}\hline
\textbf{Region} & $\bm{\al}$ & \textbf{Limit under} $\bm{Q^{\nu,h}|_{\mathcal{F}_t}}$ & \textbf{Limit under} $\bm{Q_t^{\nu,h}|_{\mathcal{F}_t}}$ & \textbf{Proof} \\
\hline
$L_1$ & \multirow{3}{*}{$1/2$} & \multirow{2}{*}{$-R$}  & $-\mk$ & \autoref{thmL:scaled} \\
\cline{1-1}\cline{4-5}
$R_1$ &  &  & $-\ek$ & \autoref{sec:R_1} \\
\cline{1-1}\cline{3-5}
$L_2$ &  & $W$  & $\ak$ & \autoref{sec:L_2} \\
\hline
$R_2$  & \multirow{3}{*}{$1$} & \multicolumn{2}{c|}{$(\nu+h)\uk$} & \autoref{sec:R_2,3} \\ 
\cline{1-1}\cline{4-5}
$L_3$   &    			      &               &  $-h\uk^U$ & \autoref{sec:L_3} \\
\cline{1-1}\cline{3-5}
$R_3$ & &  \multicolumn{2}{c|}{$h\uk$ } & \autoref{sec:R_2,3} \\
\hline
\end{tabular}}
\end{center}
\end{thm}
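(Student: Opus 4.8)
The plan is to treat the table one block at a time, organized by the two competing directions (penalize-then-scale under $Q^{\nu,h}|_{\mathcal F_t}$ versus scale-and-penalize under $Q_t^{\nu,h}|_{\mathcal F_t}$) and by the two values of the scaling exponent. The single computational engine throughout is the reflection-principle joint density of $(S_t,X_t)$ under $P_0$ together with Brownian scaling: since $(X_{st}/\sqrt t:0\le s\le 1)$ is itself a standard Brownian motion, after this change of variables the law of $X^{\al,t}$ under $Q_t^{\nu,h}$ becomes an explicit exponential tilt of $P_0$ on $\mathcal C([0,1];\R)$. The row $L_1$ is precisely \autoref{thmL:scaled}, so nothing is needed there.

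\emph{Penalize-then-scale.} Here I would simply feed the explicit description of $Q^{\nu,h}$ from \autoref{thmL:2parameters} into the scaling map. On $L_1\cup R_1$ one has $Q^{\nu,h}=Q^{\nu+h}$ with $\nu+h<0$, so the limit is the Bessel$(3)$ limit $-R$ of \autoref{thmL:scaled} applied with parameter $\nu+h$. On $L_2$ the drift cancels, $\nu+h=0$, so $Q^{\nu,h}=P_0$ and Brownian scaling gives $X^{1/2,t}\stackrel{d}{=}W$ exactly, whence the limit $W$. For the ballistic regions I would use the strong law: on $R_2\cup L_3$ one has $Q^{\nu,h}=P_0^{\nu+h}$ and $X_{st}/t\to(\nu+h)s$ uniformly, giving the deterministic line $(\nu+h)\uk$; on $R_3$ the measure is $\tfrac{\nu+2h}{2h}\exp(\nu S_\infty)\,P_0^h$, but this tilt is $\mathcal F_\infty$-measurable and $P_0^h$-a.s.\ finite (since $h<0$ forces $S_\infty<\infty$), so the $P_0^h$-a.s.\ convergence $X_{st}/t\to hs$ persists and the limit is $h\uk$.

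\emph{Scale-and-penalize, diffusive regime ($\al=1/2$).} After scaling, the law of $X^{1/2,t}$ under $Q_t^{\nu,h}$ is $\propto\exp\!\big(\sqrt t\,(\nu S_1+hX_1)\big)\,dP_0$ on $\mathcal C([0,1];\R)$, where $S_1,X_1$ denote the running maximum and terminal value of the canonical path. In each of the three regions the exponent $\Phi=\nu S_1+hX_1$ satisfies $\Phi\le 0$ $P_0$-a.s.\ (using $S_1\ge 0$, $X_1\le S_1$, and the signs of $\nu$ and $\nu+h$), with equality forcing a boundary event: $\{S_1=0\}$ on $L_1$, $\{S_1=X_1=0\}$ on $R_1$, and $\{X_1=S_1\}$ on $L_2$. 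As $t\to\infty$ the normalized tilt concentrates on this event, and I would identify the resulting conditioned law as the negative meander $-\mk$, the negative excursion $-\ek$, and the Brownian ascent $\ak$, respectively. The rigorous step is weak convergence: compute the finite-dimensional densities by inserting the Markov property at intermediate times and the reflection density on the terminal segment, extract the $t\to\infty$ asymptotics of these densities and of the normalizer $E_0[\exp(\nu S_t+hX_t)]$, and match against the known descriptions of $\mk,\ek$ and of $\ak$ from \autoref{sec:ascent}; tightness in $\mathcal C([0,1];\R)$ then closes the argument.

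\emph{Scale-and-penalize, ballistic regime ($\al=1$), and the main obstacle.} Here $X^{1,t}\stackrel{d}{=}t^{-1/2}$ times a standard Brownian motion, so Schilder's theorem applies at speed $t$ while the tilt contributes $\exp\!\big(t\,(\nu\overline\phi+h\phi(1))\big)$; by Varadhan's lemma the limit concentrates on the maximizers of
\[
J(\phi)=\nu\sup_{0\le s\le 1}\phi(s)+h\phi(1)-\tfrac12\int_0^1\dot\phi(s)^2\,ds,\qquad \phi(0)=0.
\]
A short calculus-of-variations argument shows the maximizers are up-down paths, yielding the unique line $(\nu+h)\uk$ on $R_2$ and $h\uk$ on $R_3$, which simultaneously reproves the penalize-then-scale entries there. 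The crux is the critical ray $L_3$, where $h=-\tfrac{\nu}{2}$: one checks that $J\equiv\tfrac{\nu^2}{8}$ along the entire one-parameter family $-h\,\uk^\Te$, $\Te\in[0,1]$, so the variational principle degenerates and pins down only the support of the limit, not the law of the peak location. Proving that this location is exactly Uniform$[0,1]$ (see \autoref{sec:L_3})—and, analogously, the precise identification of the Brownian ascent on $L_2$—is the genuinely new difficulty: it requires sharp second-order asymptotics of the tilted normalizing integral, equivalently the exact conditional law of the time of the maximum, rather than the leading-order large-deviation estimate, and this is where I expect the bulk of the work to lie.
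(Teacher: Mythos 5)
Your overall architecture is sound and every heuristic in it is correct: the penalize-then-scale column is handled essentially as the paper does it (feed \autoref{thmL:2parameters} into the scaling map; your strong-law step is the paper's \autoref{lem:ballistic} plus dominated convergence, using that $\exp(\nu S_\infty)$ is integrable under $P_0^h$ by Williams' decomposition), and your Schilder--Varadhan treatment of the ballistic scale-and-penalize entries on $R_2$ and $R_3$ is a genuinely different, workable route: the paper instead uses a Girsanov change of measure, Williams' decomposition to compute $E_0^h[\exp(\nu S_\infty)]=\tfrac{2h}{\nu+2h}$, and the duality involution $(\nu,h)\mapsto\big(\nu,-(\nu+h)\big)$ to transfer $R_3$ to $R_2$. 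Your variational computation (unique maximizer $(\nu+h)\uk$ on $R_2$ and $h\uk$ on $R_3$, with the degenerate flat family $-h\uk^\Te$, $J\equiv\nu^2/8$, on $L_3$) is correct and, modulo the moment hypothesis in Varadhan's lemma, buys a unified conceptual explanation of the phase diagram that the paper's case-by-case computations leave implicit.

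The genuine gap is that the three entries constituting the theorem's actual new content --- $\ak$ on $L_2$, $-\ek$ on $R_1$, and $-h\uk^U$ on $L_3$ --- are located but not proved: you correctly identify the concentration events $\{X_1=S_1\}$, $\{S_1=X_1=0\}$, and the degenerate maximizer family, and you correctly diagnose that leading-order large deviations cannot determine the mixing law on $L_3$ or the conditional law on $L_2$, but the ``sharp second-order asymptotics'' you defer to \emph{is} the proof. Moreover, on $L_2$ your expectation that this is where the bulk of the work lies is avoidable: the same duality involution maps $L_2$ to $L_1$, and $\phi$-invariance of Wiener measure gives $Q_t^{\nu,h}\big[F\big(X_{\bullet t}/\rt{t}\big)\big]=E_0\big[F_\phi\big(X_{\bullet t}/\rt{t}\big)e^{\nu S_t}\big]/E_0\big[e^{\nu S_t}\big]$, so \autoref{thmL:scaled} applies verbatim and \autoref{prop:ascent} identifies the time-reversed meander $\mk_1-\mk_{1-\bullet}$ as the ascent --- no density asymptotics at all. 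On $L_3$ the paper obtains the Uniform$[0,1]$ peak time by exact computation rather than refined LDP: it starts from the tri-variate density \eqref{eq:tri_var} of $(S_1,S_1-X_1,\Te_1)$ (a consequence of Denisov's decomposition), applies a Gaussian change of variables centered on the up-down family, invokes the uniform bridge-convergence \autoref{lem:uniform} (proved via the three-dimensional representation \eqref{eq:bes_bridge} of Bessel(3) bridges), and closes with the Fatou-type \autoref{lem:Fatou}; the surviving $\dd{u}$ factor is exactly the uniform law. On $R_1$ the paper integrates out the tilt in $S_1$, passes to Bessel(3) via the $h$-transform \autoref{prop:h_abs}, disintegrates over the endpoint into Bessel(3) bridges --- whose weak continuity in the endpoints is precisely what makes your ``match against $\ek$'' rigorous --- and finishes with Watson's lemma and bounded convergence; your finite-dimensional-densities-plus-tightness plan would additionally require a tightness argument you never supply. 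Until these three computations (or substitutes for them) are carried out, the theorem is not established.
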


\no Some remarks are in order: 
\begin{rem} The $L_1$ row is  a restatement of \autoref{thmL:scaled}. All remaining rows are new. \end{rem}
\begin{rem} Note the abrupt change in behavior when going from $L_3$ to $R_3$ in the $Q^{\nu,h}|_{\mathcal{F}_t}$ column, namely going from slope $\nu+h=-h$ to slope $h$. However, in the $Q_t^{\nu,h}|_{\mathcal{F}_t}$ column, i.e. the scaled penalization, the change from $R_2$ to $L_3$ to $R_3$ is in some sense less abrupt because the up-down process switches from $R_2$ behavior (slope $\nu+h=-h$) to $R_3$ behavior (slope $h$) at a random Uniform$[0,1]$ time.
\end{rem}

With the exception of the up-down process, the limits in the $\al=1$ regions (ballistic scaling) are all deterministic. This suggests that we try subtracting the deterministic drift from the canonical process and then scale this centered process. This leads to a functional central limit theorem which is proved in \autoref{sec:theorem_2}.

\begin{thm}\label{thm:theorem_2}
Let $W=(W_s:0\leq s\leq 1)$ be Brownian motion starting at $0$. Then the weak limits of the centered and scaled canonical process as $t\to\infty$ are as follows:
\begin{center}
\resizebox{\textwidth}{!}{
\begin{tabular}{|c|c|c|c|}\hline
\textbf{Region} & \textbf{Process} & \textbf{Limit under} $\bm{Q^{\nu,h}|_{\mathcal{F}_t}}$ & \textbf{Limit under} $\bm{Q_t^{\nu,h}|_{\mathcal{F}_t}}$ \\ \hline
$R_2$ & \multirow{2}{*}{$\frac{X_{\bullet t}-(\nu+h) t\uk_\bullet}{\rt{t}}$} &  \multicolumn{2}{c|}{}  \\
\cline{1-1}\cline{4-4}
$L_3$ &  & W & see \autoref{thm:theorem_3}  \\
\cline{1-2}\cline{4-4}
$R_3$ & $\frac{X_{\bullet t}-ht\uk_\bullet}{\rt{t}}$   & \multicolumn{2}{c|}{}  \\
\hline
\end{tabular}}
\end{center}
\end{thm}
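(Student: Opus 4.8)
The plan is to reduce every entry of the table to a statement about the canonical process under $P_0^h$ or $P_0^{\nu+h}$ by exploiting the density representations, and then to observe that after centering and diffusive scaling the canonical process is, by the Brownian scaling property, \emph{exactly} a standard Brownian motion for each fixed $t$; the whole content of the theorem is therefore that the relevant Radon--Nikodym densities do not disturb this scaling limit. Throughout I write $D_t:=S_t-X_t$ for the drawdown and recall that $\uk_s=s$.

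For the penalize-then-scale column ($Q^{\nu,h}|_{\mathcal F_t}$): in $R_2$ and $L_3$, \autoref{thmL:2parameters} gives $Q^{\nu,h}|_{\mathcal F_t}=P_0^{\nu+h}|_{\mathcal F_t}$, so the canonical process is literally Brownian motion with drift $\nu+h$; subtracting $(\nu+h)ts$ and dividing by $\sqrt t$ produces a standard Brownian motion for \emph{every} $t$, and the limit $W$ follows with no passage to the limit. In $R_3$, \autoref{thmL:2parameters} gives $dQ^{\nu,h}=\frac{\nu+2h}{2h}e^{\nu S_\infty}\,dP_0^h$ (on $\mathcal F_t$ one uses the martingale $E_0^h[\frac{\nu+2h}{2h}e^{\nu S_\infty}\mid\mathcal F_t]$). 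Since here the centering drift is $h$, under $P_0^h$ the centered--scaled process is again an exact standard Brownian motion, so it remains only to show the density does not affect its weak limit. Because $h<0$, the global maximum $S_\infty$ is attained at an a.s.\ finite time, hence the limiting density is measurable with respect to the germ $\bigcup_K\mathcal F_K$ at finite times, while the scaled path on $[K/t,1]$ becomes asymptotically independent of $\mathcal F_K$; a truncation in $K$ then yields $W$.

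For the scaled-penalization column ($Q_t^{\nu,h}|_{\mathcal F_t}$) I use that, by Girsanov, $\Ga_t=e^{\nu S_t}$ relative to $P_0^h$, so the density of $Q_t^{\nu,h}$ with respect to $P_0^h$ on $\mathcal F_t$ is $e^{\nu S_t}/E_0^h[e^{\nu S_t}]$. In $R_2$ (where $\nu+h>0$) the clean move is to rewrite this relative to $P_0^{\nu+h}$: a short Girsanov computation gives
$$\frac{dQ_t^{\nu,h}}{dP_0^{\nu+h}}\Big|_{\mathcal F_t}=\frac{e^{\nu D_t}}{E_0^{\nu+h}[e^{\nu D_t}]},$$
the normalizer being finite precisely because $\nu+2h>0$ on $R_2$. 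Since the effective drift $\nu+h$ is positive, $D_t$ is tight, and a time-reversal at the terminal time shows $D_t$ depends only on the $O(1)$-scale increments near time $t$; hence $D_t/\sqrt t\to 0$ and $D_t$ is asymptotically independent of the diffusively scaled path, which under $P_0^{\nu+h}$ is an exact standard Brownian motion. The mean-one tilt therefore leaves the weak limit unchanged, giving $W$. In $R_3$ (where $h<0$) the centering drift is $h$, so under $P_0^h$ the scaled path is again exactly standard, and $e^{\nu S_t}/E_0^h[e^{\nu S_t}]\to\frac{\nu+2h}{2h}e^{\nu S_\infty}$ (using $S_t\uparrow S_\infty$ and $E_0^h[e^{\nu S_\infty}]=\frac{2h}{\nu+2h}$, valid since $\nu+2h<0$ on $R_3$); this is exactly the $R_3$ density of the first column, so the identical germ/decoupling argument delivers $W$. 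The region $L_3$ is genuinely different and is deferred to \autoref{thm:theorem_3}: there the first-order drift under $Q_t^{\nu,h}$ is the \emph{random} up-down profile $-h\uk^U$, so centering by the fixed linear drift $(\nu+h)t\uk$ leaves a macroscopic non-vanishing correction and the limit is not Brownian.

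The main obstacle is the decoupling argument common to the $R_3$ cases and the $R_2$ scaled penalization: making rigorous that tilting by a mean-one density concentrated on an $O(1)$-scale germ of the path (the global-max location near the origin in $R_3$, or the terminal drawdown $D_t$ in $R_2$) does not perturb the diffusive limit. I expect to isolate this as a single lemma: if densities $L_t\ge 0$ with $E_P[L_t]=1$ converge in $L^1(P)$ to a limit measurable with respect to the finite-time (resp.\ terminal, after time-reversal) germ, and the scaled path converges weakly to $W$ under $P$ while being asymptotically independent of that germ, then the scaled path converges weakly to $W$ under $L_t\,dP$ as well. Tightness transfers for free, since the scaled path is an exact Brownian motion under the reference measure and the densities are uniformly integrable; the work lies entirely in the asymptotic-independence (finite-dimensional) step, handled by conditioning on $\mathcal F_K$, approximating the density by its $\mathcal F_K$-projection, and letting $K\to\infty$ after $t\to\infty$.
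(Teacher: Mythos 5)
Your proposal is correct in substance but takes a genuinely different route through the hard cases, so it is worth comparing. For the first column on $R_2\cup L_3$ you coincide with the paper's proof of \eqref{eq:FCLT_Q}: by \autoref{thmL:2parameters} the centered, scaled path is an \emph{exact} standard Brownian motion under $P_0^{\nu+h}$ for every $t$, so no limit is taken. The real content is the $R_3$ entry \eqref{eq:FCLT_Qdelta} and the scaled column \eqref{eq:FCLT_scaled}, and here you diverge. The paper modifies the \emph{functional}: it replaces $F$ by $F_\de=F\circ\Pi_\de$ (\autoref{def:F_delta}) so that $F_\de$ is conditionally independent of the initial path segment given $X_\de$, computes the conditional tilt explicitly via the Brownian bridge maximum law \eqref{eq:bridge_max} and Watson's lemma, passes to the limit with the Fatou-type \autoref{lem:Fatou}, and then removes $\de$ by H\"older's inequality, choosing $p>1$ with $p\nu<-2h$. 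You instead approximate the \emph{density}: $L_t\to\frac{\nu+2h}{2h}e^{\nu S_\infty}$ in $L^1(P_0^h)$ (martingale closure or monotone convergence plus Scheff\'e), truncate the limit by $\mathcal{F}_K$-measurable approximations such as $e^{\nu S_K}$, and use asymptotic independence of the diffusively scaled path from any fixed $\mathcal{F}_K$, since the scaled path differs from an $\mathcal{F}_K$-independent one by sup-norm $O\big(\sup_{[0,K]}|X|/\sqrt{t}\big)$. This soft route buys you freedom from all explicit computation, and it needs no H\"older room (only integrability of $e^{\nu S_\infty}$, i.e. $\nu+2h<0$, rather than a strict $p>1$ margin), at the cost of actually proving the decoupling lemma you state; your sketch (condition on $\mathcal{F}_K$, dominated convergence, then $K\to\infty$ after $t\to\infty$) is standard and completable. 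For $R_2$ under $Q_t^{\nu,h}$, your drawdown representation $dQ_t^{\nu,h}/dP_0^{\nu+h}|_{\mathcal{F}_t}=e^{\nu D_t}/E_0^{\nu+h}[e^{\nu D_t}]$ is correct and attractive, but note that $e^{\nu D_t}$ does \emph{not} converge almost surely, so the $L^1$-limit hypothesis of your lemma fails as literally stated; you must pass through the time reversal you mention, at which point $D_t$ becomes the running maximum of a drift-$\big(-(\nu+h)\big)$ path attained near the origin, and you are then running precisely the paper's duality argument via $\phi$ (\autoref{def:reversal}) and the involution $(\nu,h)\mapsto\big(\nu,-(\nu+h)\big)$, just implemented pathwise instead of as a reduction to the already-proved $R_3$ statement. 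Your $R_3$ scaled-column reduction (replacing $S_t$ by $S_\infty$ in the Gibbs density) matches the paper's step establishing $E_0^h[F\,e^{\nu S_t}]=E_0^h[F\,e^{\nu S_\infty}]+o(1)$, and your deferral of $L_3$ to \autoref{thm:theorem_3} is exactly what the theorem's table prescribes. One small slip to fix: $E_0^{\nu+h}[e^{\nu D_t}]$ is finite for every fixed $t$ for \emph{all} parameter values (Gaussian tails); the condition $\nu+2h>0$ is what makes it bounded in $t$ and convergent to $\frac{2(\nu+h)}{\nu+2h}$, which is the statement your argument actually needs.
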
 
\no Here the $\bullet$ stands for a time parameter that ranges between $0$ and $1$. For example, this notation allows us to write $X_{\bullet t}-ht\uk_\bullet$ rather than the unwieldy $(X_{st}-ht\uk_s:0\leq s\leq 1)$.

The center-right entry in the above table is exceptional because in that case we don't have a deterministic drift that can be subtracted in order to center the process. In light of the $L_3$ row of \autoref{thm:theorem_1}, we need to subtract a random drift that switches from $-h$ to $h$ at an appropriate time. More specifically, define ${\Te_t=\inf\{s:X_s=S_t\}}$. Then for each $t>0$, we want to subtract from $X_{\bullet t}$ a continuous process starting at $0$ that has drift $-ht$ until time $\Te_t/t$ and has drift $ht$ thereafter. A candidate for this random centering process is $2S_{\bullet t}+ht\uk_\bullet$ which is seen to have the desired up-down drift. Indeed, this allows us to fill in the remaining entry of the table with a functional central limit theorem which is proved in \autoref{sec:theorem_3}.

\begin{thm}\label{thm:theorem_3}
Let $W=(W_s:0\leq s\leq 1)$ be Brownian motion starting at $0$. If $(\nu,h)\in L_3$ then 
$$\frac{X_{\bullet t}-(2S_{\bullet t}+ht\uk_\bullet)}{\rt{t}}$$
under $Q_t^{\nu,h}|_{\mathcal{F}_t}$ converges weakly to $W$ as $t\to\infty$.
\end{thm}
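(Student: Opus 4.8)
The plan is to collapse the two-parameter penalization on $L_3$ to a one-sided exponential tilt of a Bessel(3) process by means of Pitman's representation of the running maximum, and then to prove a central limit theorem for that tilted Bessel(3) process by a Girsanov/Doob analysis.

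First I would record the algebraic simplification special to $L_3$. By Girsanov's theorem the Gibbs measure can be written as penalization of drift-$h$ Brownian motion, $E_{Q_t^{\nu,h}}[F]=E_0^h[Fe^{\nu S_t}]/E_0^h[e^{\nu S_t}]=E_0[Fe^{\nu S_t+hX_t}]/E_0[e^{\nu S_t+hX_t}]$. Introduce the Pitman transform $D_t:=2S_t-X_t$, so that $X_t=2S_t-D_t$ and $\nu S_t+hX_t=(\nu+2h)S_t-hD_t$. On $L_3$ one has $\nu+2h=0$, whence $\nu S_t+hX_t=-hD_t$ depends on the path only through $D_t$ (and $-h>0$). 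Since $\uk_\bullet$ is the identity on $[0,1]$, the process to be studied is $Y^t_s:=(X_{st}-2S_{st}-hts)/\rt t=-(D_{st}+hst)/\rt t$, a functional of $(D_s:0\le s\le t)$ alone. By Pitman's theorem $(D_s:0\le s\le t)$ is a Bessel(3) process under $P_0$, so for every bounded measurable $G$ one gets $E_{Q_t^{\nu,h}}[G(D)]=E^{(3)}[G(\rho)\,e^{-h\rho_t}]/E^{(3)}[e^{-h\rho_t}]$, where $\rho$ is a Bessel(3) process started at $0$. Thus it suffices to prove that, under the law of $\rho$ tilted by $e^{-h\rho_t}$, the process $(\rho_{st}-(-h)st)/\rt t$ converges weakly to Brownian motion, its negative being the claimed limit $W$.

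Second, I would identify the effective drift and the normalization. Using the explicit Bessel(3) transition density $p^{(3)}_u(x,y)=\tfrac{y}{x}(2\pi u)^{-1/2}(e^{-(y-x)^2/2u}-e^{-(y+x)^2/2u})$ and the Maxwell law of $\rho_t$, a saddle-point computation gives the normalizer $E^{(3)}[e^{-h\rho_t}]\sim 2h^2t\,e^{h^2t/2}$ and, for $x,u$ large, the tilting factor $\Psi_u(x):=E^{(3)}_x[e^{-h\rho_u}]\sim\tfrac{x-hu}{x}\,e^{-hx+h^2u/2}$. Exponential tilting of Bessel(3) by $e^{-h\rho_t}$ is a Doob $h$-transform, under which $\rho$ solves $d\rho_s=dB_s+\bigl(\rho_s^{-1}+\partial_x\log\Psi_{t-s}(\rho_s)\bigr)\,ds$; the asymptotics of $\Psi$ give $\partial_x\log\Psi_{t-s}(x)\to-h$ in the ballistic regime where $x$ is of order $(-h)st$. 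Hence the tilted process is ballistic with speed $-h$ and unit diffusion coefficient, which is exactly what produces a standard Brownian limit after centering by $(-h)st$ and dividing by $\rt t$.

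Third, I would prove the convergence in two stages. For the finite-dimensional distributions, fixing $0<s_1<\dots<s_k\le1$ and substituting $\rho_{s_it}=(-h)s_it+\rt t\,w_i$ into the product of Bessel(3) transition densities times $\Psi_{(1-s_k)t}(\rho_{s_kt})$ divided by the normalizer, the $O(t)$ and $O(\rt t)$ contributions to the exponents cancel, the surviving quadratic form is exactly the one for $(W_{s_1},\dots,W_{s_k})$, and the polynomial prefactors combine to the correct Gaussian constant; here the factor $s_k^{-1}$ from the $y/x$ term in $\Psi$ and the factor $2h^2t$ from the normalizer are precisely what is needed. For tightness I would use the displayed SDE: writing $\tilde\rho_s=\rho_{st}$, the centered scaled process has martingale part a time-changed Brownian motion and drift $\rt t\,(\tilde\rho_s^{-1}+\partial_x\log\Psi_{(1-s)t}(\tilde\rho_s)+h)$, which the same asymptotics show is $O(t^{-1/2})$ on the bulk; a Bessel comparison controls the short initial interval where $\tilde\rho_s$ is small, and Kolmogorov's criterion then yields tightness in $C[0,1]$. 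Finite-dimensional convergence together with tightness gives $Y^t\Rightarrow-W$, and since $-W$ is a standard Brownian motion the theorem follows.

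The main obstacle I anticipate is the uniform asymptotic analysis of $\Psi_u(x)=E^{(3)}_x[e^{-h\rho_u}]$ and of its logarithmic derivative, which is needed both for the prefactor bookkeeping in the finite-dimensional limit and for the drift estimate in the tightness argument. The delicate point is that these estimates must hold uniformly down to small values of $x$ and $u$, i.e. near $s=0$, where the Bessel(3) process genuinely differs from drifting Brownian motion and the $\rho^{-1}$ drift is not negligible; one must verify that this initial layer, whose width is $O(1)$ on the original time scale, contributes negligibly to the scaled process and does not spoil tightness.
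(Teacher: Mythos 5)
Your opening reduction coincides exactly with the paper's: on $L_3$ one has $\nu+2h=0$, so both the weight $\exp(\nu S_t+hX_t)=\exp\big(-h(2S_t-X_t)\big)$ and the centered process are functionals of the Pitman transform $2S-X$ alone, and Pitman's theorem converts the whole problem into a Bessel(3) process tilted by $\exp(-h\rho_t)$ — this is precisely \eqref{eq:L3_Bessel}, and your normalizer asymptotic $2h^2t\,e^{h^2t/2}$ agrees with \autoref{prop:asymptotics}. From there the two proofs genuinely diverge. The paper never analyzes the tilted diffusion: it replaces $F$ by $F_\de=F\circ\Pi_\de$ so the functional ignores the initial $[0,\de]$ segment, shifts the Bessel starting point from $0$ to $x>0$ (\autoref{lem:absolute}), undoes the $h$-transform to land on Brownian motion restricted to stay positive (\autoref{prop:h_abs}), absorbs the tilt by a Girsanov drift $-h\rt{t}$, passes to the limit via the Fatou-type \autoref{lem:Fatou}, and finally removes the $\Pi_\de$ modification using the Lipschitz bound $\De_\de^F(X_\bullet)\leq 2K\sup_{0\leq s\leq\de}|X_s|$ controlled through Bessel bridges — so no finite-dimensional computations and no tightness argument ever appear. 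You instead attack the tilted law head-on as a Doob transform with fdd convergence plus tightness. Your plan is sound, and in fact slightly cleaner than you state: since $\log\Psi_u(x)\approx\log(x-hu)-\log x-hx+\tfrac{1}{2}h^2u$, the Bessel drift $1/x$ cancels \emph{exactly} against the $-1/x$ coming from the prefactor, leaving total drift $\frac{1}{x-h(t-s)}-h$ up to image-term corrections, i.e.\ drifting Brownian motion plus an $O(1/t)$ repulsion; this is what makes your $O(t^{-1/2})$ drift estimate and the exponent cancellations in the fdd computation work. What your route costs is exactly what you flag: rigorous uniform control of $\Psi_u(x)$, including the image term $e^{-(y+x)^2/2u}$ down to small $x$ and $u$, and the initial-layer estimate near $s=0$ in the tightness proof — the very difficulty the paper sidesteps structurally, since $\Pi_\de$ straightens and decouples the initial segment and its error is killed as $\de\searrow 0$. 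What you gain is weak convergence directly against all bounded continuous functionals (the paper works with bounded Lipschitz $F$, which suffices to determine the weak limit) together with an explicit pathwise description of the penalized process as a time-inhomogeneous diffusion, which the paper's absolute-continuity chain never exhibits.
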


\subsection{Notation}\label{sec:notation}
For the remainder of this paper, the non-negative real numbers are denoted by $\R_+$ and we use $X$ for the $\mathcal{C}\big(\R_+;\R\big)$ canonical process under $P_x$ or $P_x^h$. Abusing notation, we also use $X$ for the $\mathcal{C}\big([0,1];\R\big)$ canonical process under the analogous measures $P_x$ or $P_x^h$. We use a subscript when restricting to paths starting at a particular point, say $\mathcal{C}_0\big([0,1];\R\big)$ for paths starting at $0$. As explained below \autoref{thm:theorem_2}, we often use the bullet point $\bullet$ to stand for a time parameter that ranges between $0$ and $1$. Expectation under $P_x$ and $P_x^h$ is denoted by $E_x$ and $E_x^h$, respectively. Expectation under $Q^\nu$ and $Q_t^\nu$ is denoted by $Q^\nu[\cdot]$ and $Q_t^\nu[\cdot]$, respectively. We always have $S_t$ and $I_t$ denoting the running extrema of the canonical process, namely $S_t=\sup_{s\leq t}X_s$ and $I_t=\inf_{s\leq t}X_s$. When dealing with the canonical process, we use $\Te_t$ and $\te_t$ to refer to the first time at which the maximum and minimum, respectively, is attained over the time interval $[0,t]$. Also, $\tau_x$ and $\ga_x$ refer to the first and last hitting times of $x$, respectively. More specifically, $\Te_t=\inf\{s:X_s=S_t\}$, $\te_t=\inf\{s:X_s=I_t\}$, $\tau_x=\inf\{t:X_t=x\}$, and $\ga_x=\sup\{t:X_t=x\}$. For the Bessel(3) process we use $R$. A Brownian path of duration $1$ starting at $0$ is written as $W$. We use Fraktur letters for the various time-inhomogeneous processes that appear in this paper: Brownian ascent $\ak$, standard Brownian bridge $\bk$, pseudo-Brownian bridge $\tilde{\bk}$, normalized Brownian excursion $\ek$, Brownian meander $\mk$ and co-meander $\tilde{\mk}$, up-down process $\uk^U$, and also for the deterministic path with slope $1$ which we write as $\uk$. The path transformations $\phi$ and $\Pi_\de$ are described in \autoref{def:reversal} and \autoref{def:F_delta}, respectively. We use $\|F\|$ to denote the supremum norm of a bounded path functional $F:\mathcal{C}\big([0,1];\R\big)\to\R$.

\subsection{Organization of Paper}
In \autoref{sec:duality}, we introduce a duality relation between regions of the parameter plane and use it to compute the partition function asymptotics. \autoref{sec:theorem_1} is devoted to the proof of \autoref{thm:theorem_1}. In \autoref{sec:theorem_2}, we describe a modification of path functionals that allows us to decouple them from an otherwise dependent factor. This is used to prove \autoref{thm:theorem_2} in the remainder of that section. \autoref{sec:theorem_3} contains the proof of \autoref{thm:theorem_3}. In \autoref{sec:ascent}, we provide several constructions of the Brownian ascent and give a connection to some recent literature. In \autoref{sec:future}, we comment on work in preparation and suggest some directions for future research. We gather various known results used throughout the paper and include them along with references in \autoref{app:appendix}.

\section{Duality and Partition Function Asymptotics}\label{sec:duality}
The normalization constant $E_0[\Ga_t]$ appearing in the Gibbs measure \eqref{eq:gibbs} is known as the \emph{partition function}. The first step in proving a scaled penalization result is to obtain an asymptotic for the partition function as $t\to\infty$. Indeed, differing asymptotics may indicate different phases and the asymptotic often suggests the right scaling exponent, e.g. diffusive scaling for power law behavior and ballistic scaling for exponential behavior. In this section we compute partition function asymptotics for our two-parameter model in each of the parameter phases. While some of these asymptotics appeared in \cite{long_bridges}, we derive them all for the sake of completeness. Most of the cases boil down to an application of Watson's lemma or Laplace's method. Refer to \autoref{app:appendix} for precise statements of these tools from asymptotic analysis. 

To reduce the number of computations required, we make use of a duality relation between the regions $R_2$ and $R_3$ and critical lines $L_1$ and $L_2$. Recall that $P_0$ is invariant under the path transformation ${X_\bullet\mapsto X_{1-\bullet}-X_1}$, for example, see Lemma 2.9.4 in \cite{K&S}. So the joint distribution of $X_1$ and $S_1$ under $P_0$ coincides with that of $-X_1$ and $\sup_{s\leq 1}(X_{1-s}-X_1)=S_1-X_1$ under $P_0$. Consequently,
\begin{equation*}
\begin{split}
E_0\left[\exp\left(\nu S_t+hX_t\right)\right]&=E_0\left[\exp\left(\rt{t}(\nu S_1+hX_1)\right)\right]\\
&=E_0\left[\exp\left(\rt{t}\big(\nu S_1-(\nu+h)X_1\big)\right)\right]\\
&=E_0\left[\exp\left(\nu S_t-(\nu+h)X_t\right)\right]
\end{split}
\end{equation*}
follows from Brownian scaling. An easy calculation while referring to \autoref{fig:diagram} shows that if $(\nu,h)\in L_1$, then $\big(\nu,-(\nu+h)\big)\in L_2$ and vice versa. The same holds for $R_2$ and $R_3$. In fact, the map $(\nu,h)\mapsto\big(\nu,-(\nu+h)\big)$ is an involution from $R_2$ onto $R_3$ and from $L_1$ onto $L_2$. Hence the partition function asymptotics for $L_2$ and $R_2$ can be obtained from those of $L_1$ and $R_3$ by substituting $\big(\nu,-(\nu+h)\big)$ for $(\nu,h)$. 

We can push this idea further by applying it to functionals of the entire path. First we need some new definitions.

\begin{definition}\label{def:reversal}
For $X_\bullet\in\mathcal{C}\big([0,1];\R\big)$, define $\phi:\mathcal{C}\big([0,1];\R\big)\to \mathcal{C}\big([0,1];\R\big)$ by
$$\phi X_s=X_{1-s}-(X_1-X_0),~0\leq s\leq 1.$$
For $F:\mathcal{C}\big([0,1];\R\big)\to\R$, let $F_\phi$ denote $F\circ\phi$.
\end{definition}
This path transformation reverses time and shifts the resulting path so that it starts at the same place. It is clear that $\phi$ is a linear transformation from $\mathcal{C}\big([0,1];\R\big)$ onto $\mathcal{C}\big([0,1];\R\big)$ that is continuous and an involution. Additionally, $F_\phi$ is bounded continuous whenever $F$ is and $\|F_\phi\|=\|F\|$. As mentioned above, $P_0$ is invariant under $\phi$ so we have $E_0[F_\phi(X_\bullet)]=E_0[F(X_\bullet)]$. This duality will be exploited again in \autoref{sec:L_2}, \autoref{sec:R_2,3}, and \autoref{sec:theorem_2}.

\begin{prop}\label{prop:asymptotics}
The partition function has the following asymptotics as $t\to\infty$:
$$E_0[\exp(\nu S_t+hX_t)]\sim\left\{
  \begin{array}{ll}
    -\frac{1}{\nu}\rt{\frac{2}{\pi t}} & : L_1=\{(\nu,h):\nu<0,~h=0\}\\
    \\
    -\frac{\nu}{h^2(\nu+h)^2}\rt{\frac{2}{\pi t^3}} & : R_1=\{(\nu,h):h<-\nu,~h>0\}\\
    \\
    -\frac{1}{\nu}\rt{\frac{2}{\pi t}} & : L_2=\{(\nu,h):h=-\nu,~\nu<0\}\\
		\\
		2\frac{\nu+h}{\nu+2h}\exp\left(\frac{1}{2}(\nu+h)^2 t\right) & : R_2=\{(\nu,h):h>-\nu,~h>-\frac{1}{2}\nu\}\\
  	\\
  	2h^2t\exp\left(\frac{1}{2}h^2 t\right) & : L_3=\{(\nu,h):h=-\frac{1}{2}\nu,~\nu>0\}\\
    \\
    \frac{2h}{\nu+2h}\exp\left(\frac{1}{2}h^2 t\right) & : R_3=\{(\nu,h):h<0,~h<-\frac{1}{2}\nu\}.\\    
  \end{array}
  \right.$$
\end{prop}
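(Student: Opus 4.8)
**The plan is to compute each partition function asymptotic via Brownian scaling followed by an explicit integral representation for the joint law of $(S_1, X_1)$, then apply Watson's lemma or Laplace's method.** First I would use Brownian scaling to write
\begin{equation*}
E_0[\exp(\nu S_t + h X_t)] = E_0\!\left[\exp\!\big(\rt{t}(\nu S_1 + h X_1)\big)\right],
\end{equation*}
which reduces everything to a fixed-time calculation with the large parameter $\rt{t}$ appearing in the exponent. The key input is the well-known joint density of $(S_1, X_1)$ under $P_0$, namely that for $a \geq \max(b,0)$ the pair $(S_1, X_1)$ has density proportional to $(2a-b)\exp\!\big(-(2a-b)^2/2\big)$; this is a reflection-principle computation that I would either cite from the appendix or recall directly. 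Substituting this in turns each expectation into an explicit double integral over the region $\{a \geq 0,\ b \leq a\}$, which after the change of variables to $u = 2a - b$ (or integrating out one variable) becomes a one-dimensional Laplace-type integral to which the asymptotic tools apply.

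Rather than treat all six cases independently, I would exploit the duality already established: the involution $(\nu,h) \mapsto (\nu, -(\nu+h))$ carries $L_1$ onto $L_2$ and $R_3$ onto $R_2$, and the reversal invariance gives $E_0[\exp(\nu S_t + h X_t)] = E_0[\exp(\nu S_t - (\nu+h) X_t)]$. So it suffices to compute the asymptotics genuinely only for $L_1$, $R_1$, $R_3$, and the critical line $L_3$; the $L_2$ and $R_2$ formulas then follow by the substitution, and one checks they match the stated expressions. This cuts the work roughly in half and explains the organization of the six regions.

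For the case-by-case asymptotics I expect the following dichotomy. In the diffusive regimes $L_1$, $R_1$, $L_2$ the dominant contribution comes from a neighborhood of the origin in the integration variable, so after expanding the integrand there, Watson's lemma yields the $t^{-1/2}$ and $t^{-3/2}$ power-law decay; the specific polynomial prefactor (e.g.\ the $h^2(\nu+h)^2$ denominator in $R_1$) comes from the leading coefficient in that local expansion. In the ballistic regimes $R_2$, $R_3$, and the critical $L_3$, the exponent $\rt{t}(\nu S_1 + h X_1)$ is maximized at an interior or boundary point of the support, so Laplace's method gives exponential growth $\exp(\frac{1}{2}(\nu+h)^2 t)$ or $\exp(\frac{1}{2}h^2 t)$; the exponential rate is just the maximal value of $\frac12(\nu s + h x)^2$-type quadratic over the feasible $(s,x)$, and the prefactor comes from the Gaussian curvature at the maximizer.

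The main obstacle I anticipate is the critical line $L_3 = \{h = -\tfrac12\nu,\ \nu>0\}$, which produces the anomalous $t\exp(\frac12 h^2 t)$ asymptotic with a linear-in-$t$ prefactor rather than the usual constant or power. This signals a degenerate or boundary maximum in Laplace's method: on $L_3$ the maximizer of the exponent sits exactly at the edge between the $R_2$ and $R_3$ regimes, so the standard interior-saddle analysis fails and the maximum is attained along an interval (or at a point where the leading quadratic term degenerates). Resolving this will require a careful boundary-layer analysis — either integrating exactly before taking the limit, or rescaling the integration variable near the critical point to capture the extra factor of $t$. This degeneracy is precisely the analytic fingerprint of $L_3$ being a phase boundary, consistent with its role as a ``critical'' phase in \autoref{thm:theorem_1}.
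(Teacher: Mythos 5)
Your plan is correct and would produce all six asymptotics, but its computational engine differs from the paper's. You share the paper's two structural ingredients, Brownian scaling and the duality $(\nu,h)\mapsto\big(\nu,-(\nu+h)\big)$ that reduces the work to the four cases $L_1$, $R_1$, $L_3$, $R_3$; but where you work directly from the reflection-principle joint density of $(S_1,X_1)$ and do classical asymptotics on the resulting double integral, the paper substitutes a probabilistic identity in each case: the half-Gaussian marginal of $S_1$ plus Watson's lemma for $L_1$; a Girsanov change of measure plus Williams' decomposition (so $S_\infty$ is Exponential$(-2h)$ under $P_0^h$) plus dominated convergence for $R_3$, with no Laplace analysis at all; and Pitman's $2S-X$ theorem for $L_3$ and $R_1$. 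In fact your change of variables $u=2a-b$ is precisely the analytic shadow of Pitman's theorem: writing $\nu a+hb=(\nu+2h)a-hu$ and integrating out $a$ over $[0,u]$ reproduces verbatim the paper's one-dimensional integrals in the $R_1$ case (a difference of two exponential integrals against $u e^{-u^2/2}$, each handled by Watson's lemma since $h>0$ and $\nu+h<0$), and, when $\nu+2h=0$, reproduces the integral \eqref{eq:L3_part}. What the paper's route buys is that the $R_3$ constant appears immediately as $\frac{2h}{\nu+2h}=E_0^h\left[\exp(\nu S_\infty)\right]$; your route must instead determine which of the exponentials $e^{(\nu+h)\rt{t}u}$ and $e^{-h\rt{t}u}$ dominates, i.e.\ compare $(\nu+h)^2$ with $h^2$ with subcases according to the sign of $\nu+h$ — routine (the inequality $(\nu+h)^2<h^2$ does hold throughout $R_3$ when $\nu+h\geq 0$, and the other term decays by Watson's lemma when $\nu+h<0$), but fiddlier. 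What your route buys is uniformity and self-containedness: one density, one change of variables, no Williams or Pitman.

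Your one anticipated obstacle, $L_3$, is happily not one, and your diagnosis is half right: in the $(a,u)$ variables the exponent $\rt{t}\big((\nu+2h)a-hu\big)-u^2/2$ is flat in $a$ exactly on $L_3$, so the maximum is attained along a segment, as you guessed. But your own first remedy — integrate exactly in $a$ before taking limits — collapses the inner integral to exactly $u$, yielding $\rt{\frac{2}{\pi}}\int_0^\infty u^2 e^{-u^2/2-h\rt{t}u}\dd{u}$, and after the rescaling $u\mapsto u\rt{t}$ this becomes $\rt{\frac{2}{\pi}}\,t^{3/2}\int_0^\infty u^2 e^{-t(hu+u^2/2)}\dd{u}$ with a nondegenerate interior maximum at $u=-h>0$. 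Completely standard Laplace's method then gives $2h^2t\exp\left(\frac{1}{2}h^2t\right)$: the anomalous factor of $t$ is simply the Jacobian $t^{3/2}$ times Laplace's $t^{-1/2}$ (equivalently, the length of the flat segment of maximizers), not a boundary-layer effect. This is the same integral the paper reaches via Pitman's theorem, so no new analysis beyond \autoref{thm:Laplace} is needed.
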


\begin{proof}
We divide the proof into four cases.\vs
\begin{enumerate}[label=\textbf{\arabic*.}]
\item
\no\textbf{\textit{$\boldsymbol{L_1}$ and $\boldsymbol{L_2}$ case}}
\vs

The $L_1$ asymptotic follows from Watson's lemma being applied to 
$$E_0\left[\exp\left(\nu\rt{t}S_1\right)\right]=\int_0^\infty\exp\left(\nu\rt{t}y\right)\frac{2}{\rt{2\pi}}\exp\left(-\frac{y^2}{2}\right)\dd{y}$$
where $\nu<0$. For $(\nu,h)\in L_2$, we appeal to duality and substitute $\big(\nu,-(\nu+h)\big)=(\nu,0)$ for $(\nu,h)$ in the $L_1$ asymptotic.

\item
\no\textbf{\textit{$\boldsymbol{R_2}$ and $\boldsymbol{R_3}$ case}}
\vs

Let $(\nu,h)\in R_3$. Using a Girsanov change of measure, we can write
$$E_0\left[\exp\left(\nu S_t+hX_t\right)\right]=E_0^h\left[\exp\left(\nu S_t\right)\right]\exp\left(\frac{1}{2}h^2 t\right).$$
In $R_3$ we have $h<0$ so $S_\infty$ is almost surely finite under $P_0^h$. In fact, $S_\infty$ has the Exponential$(-2h)$ distribution under $P_0^h$. This follows immediately from Williams' path decomposition \autoref{thm:Williams}. Since $\nu<-2h$, dominated convergence implies
$$\lim_{t\to\infty}E_0^h\left[\exp\left(\nu S_t\right)\right]=E_0^h\left[\exp\left(\nu S_\infty\right)\right]=\frac{2h}{\nu+2h}.$$
This gives the $R_3$ asymptotic. Once again, we can use duality to get the $R_2$ asymptotic by substituting $\big(\nu,-(\nu+h)\big)$ for $(\nu,h)$ in the $R_3$ asymptotic.

\item\label{itm:L_3}
\no\textbf{\textit{$\boldsymbol{L_3}$ case}}
\vs

In $L_3$ we have $(\nu,h)=(-2h,h)$, hence Pitman's $2S-X$ theorem implies
$$E_0\left[\exp\left(\rt{t}(\nu S_1+hX_1)\right)\right]=E_0\left[\exp\left(\rt{t}(-2hS_1+hX_1)\right)\right]=E_0\left[\exp\left(-h\rt{t}R_1\right)\right]$$
where $(R_t:t\geq 0)$ is a Bessel(3) process. So using the Bessel(3) transition density \eqref{eq:Bes0_dens}, we can write the $L_3$ partition function as
\begin{equation}\label{eq:L3_part}
\int_0^\infty\exp\left(-h\rt{t}y\right)\rt{\frac{2}{\pi}}y^2\exp\left(-\frac{y^2}{2}\right)\dd{y}.
\end{equation}
After the change of variables $y\mapsto y\rt{t}$, we arrive at
$$\rt{\frac{2}{\pi}}t^\frac{3}{2}\int_0^\infty y^2\exp\left(-\Big(hy+\frac{y^2}{2}\Big)t\right)\dd{y}$$
whose asymptotic can be ascertained by a direct application of Laplace's method. 

\item
\no\textbf{\textit{$\boldsymbol{R_1}$ case}}
\vs

Let $(R_t:t\geq 0)$ denote a Bessel(3) process starting at $0$ and $U$ an independent Uniform$[0,1]$ random variable. For $t$ fixed, the identity in law
$$(S_t,S_t-X_t)\stackrel{\mathcal{L}}{=}(UR_t,(1-U)R_t)$$
follows from Pitman's $2S-X$ theorem, see Item C in Chapter 1 of \cite{Penalising}. When $\nu+2h\ne 0$, this identity leads to 
\begin{equation*}
\begin{split}
&E_0\left[\exp\left(\nu S_t+hX_t\right)\right]=E_0\left[\exp\big((\nu+2h)R_tU-hR_t\big)\right]\\
&=E_0\left[\exp\left(-h\rt{t}R_1\right)\int_0^1\exp\left((\nu+2h)\rt{t}R_1 u\right)\dd{u}\right]\\
&=\frac{1}{(\nu+2h)\rt{t}}\left(E_0\left[\frac{\exp\left((\nu+h)\rt{t}R_1\right)}{R_1}\right]-E_0\left[\frac{\exp\left(-h\rt{t}R_1\right)}{R_1}\right]\right)\\
&=\frac{1}{(\nu+2h)}\rt{\frac{2}{\pi t}}\left(\int_0^\infty\exp\left((\nu+h)\rt{t}y\right)y\exp\left(-\frac{y^2}{2}\right)\dd{y}-\int_0^\infty\exp\left(-h\rt{t}y\right)y\exp\left(-\frac{y^2}{2}\right)\dd{y}\right).
\end{split}
\end{equation*}
Since $h>0$ and $\nu+h<0$, Watson's lemma can be applied to both integrals and their asymptotics combined. If $\nu+2h=0$ instead, we can use the reasoning from case \ref{itm:L_3} to show that the partition function is equal to \eqref{eq:L3_part}. However, unlike that case, now we have $h>0$ so Watson's lemma can be applied to yield the desired asymptotic.
\end{enumerate}
\end{proof}

\section[Proof of Theorem 1.1]{Proof of \autoref{thm:theorem_1}}\label{sec:theorem_1}

\subsection[L2 case]{$L_2$ case}\label{sec:L_2}

In this section we prove the $L_2$ row in the table from \autoref{thm:theorem_1}. That the limit under $Q^{\nu,h}|_{\mathcal{F}_t}$ is $W$ follows trivially from Brownian scaling and \autoref{thmL:2parameters} since $Q^{\nu,h}=P_0^{\nu+h}=P_0$ when $(\nu,h)\in L_2$. To prove the limit under $Q_t^{\nu,h}|_{\mathcal{F}_t}$ is $\ak$, we show that 
\begin{equation}\label{eq:L_2}
\lim_{t\to\infty}Q_t^{\nu,h}\left[F\left(\frac{X_{\bullet t}}{\rt{t}}\right)\right]=E\left[F(\mk_1-\mk_{1-\bullet})\right]
\end{equation}
for any bounded continuous $F:\mathcal{C}\big([0,1];\R\big)\to\R$ and $(\nu,h)\in L_2$. Then the desired result follows from \autoref{prop:ascent}. 

\begin{proof}[Proof of \eqref{eq:L_2}]
The idea behind the proof is to use duality to transfer the result of \autoref{thmL:scaled} from the $L_1$ phase to the $L_2$ phase. Recall that in $L_2$ we have $\nu<0$ and $\nu+h=0$. Then the invariance property of $\phi$ and \autoref{thmL:scaled} imply that 
\begin{equation*}
\begin{split}
\lim_{t\to\infty}Q_t^{\nu,h}\left[F\left(\frac{X_{\bullet t}}{\rt{t}}\right)\right]&=\lim_{t\to\infty}\frac{E_0\left[F_\phi\left(\frac{X_{\bullet t}}{\rt{t}}\right)\exp\left(\nu S_t-(\nu+h)X_t\right)\right]}{E_0\left[\exp\left(\nu S_t-(\nu+h)X_t\right)\right]}\\
&=\lim_{t\to\infty}\frac{E_0\left[F_\phi\left(\frac{X_{\bullet t}}{\rt{t}}\right)\exp\left(\nu S_t\right)\right]}{E_0\left[\exp\left(\nu S_t\right)\right]}\\
&=E\left[F_\phi(-\mk_\bullet)\right]\\
&=E\left[F(\mk_1-\mk_{1-\bullet})\right].
\end{split}
\end{equation*}
\end{proof}

\subsection[R2 and R3 case]{$R_2$ and $R_3$ case}\label{sec:R_2,3}

In this section we prove the $R_2$ and $R_3$ rows in the table from \autoref{thm:theorem_1}. This is done by showing that the limits
\begin{equation}\label{eq:R2,3_Q}
\lim_{t\to\infty}Q^{\nu,h}\left[F\left(\frac{X_{\bullet t}}{t}\right)\right]=\left\{
  \begin{array}{ll}
    F\big((\nu+h)\uk_\bullet\big) & :(\nu,h)\in R_2\\
		\\
		F\big(h\uk_\bullet\big) & :(\nu,h)\in R_3\\
  \end{array}
  \right.
\end{equation} 
and
\begin{equation}\label{eq:R2,3_scaled}
\lim_{t\to\infty}Q_t^{\nu,h}\left[F\left(\frac{X_{\bullet t}}{t}\right)\right]=\left\{
  \begin{array}{ll}
    F\big((\nu+h)\uk_\bullet\big) & :(\nu,h)\in R_2\\
		\\
		F\big(h\uk_\bullet\big) & :(\nu,h)\in R_3\\
  \end{array}
  \right.
\end{equation}
hold for any bounded continuous $F:\mathcal{C}\big([0,1];\R\big)\to\R$. First we need a lemma which asserts that ballistic scaling of Brownian motion with drift $h$ results in a deterministic path of slope $h$.

\begin{lem}\label{lem:ballistic}
Let $h\in\R$. Then $X_{\bullet t}/t$ converges to $h\uk_\bullet$ in probability under $P_0^h$ as $t\to\infty$.
\end{lem}

\begin{proof}
For any $\ep>0$ we have
\begin{equation*}
\begin{split}
\lim_{t\to\infty}P_0^h\left(\left\|\frac{X_{\bullet t}}{t}-h\uk_\bullet\right\|>\ep\right)&=\lim_{t\to\infty}P_0\left(\left\|\frac{X_{\bullet t}+ht\uk_\bullet}{t}-h\uk_\bullet\right\|>\ep\right)\\
&=\lim_{t\to\infty}P_0\left(\left\|X_\bullet\right\|>\ep\rt{t}\right)\\
&=0
\end{split}
\end{equation*}
since $\|X_\bullet\|$ is almost surely finite under $P_0$.
\end{proof}
\vs

\begin{proof}[Proof of \eqref{eq:R2,3_Q}]
When $(\nu,h)\in R_2$, we can use \autoref{thmL:2parameters}, \autoref{lem:ballistic}, and bounded convergence to get 
\begin{equation*}
\begin{split}
\lim_{t\to\infty}Q^{\nu,h}\left[F\left(\frac{X_{\bullet t}}{t}\right)\right]&=\lim_{t\to\infty}E_0^{\nu+h}\left[F\left(\frac{X_{\bullet t}}{t}\right)\right]\\
&=F\left((\nu+h)\uk_\bullet\right).
\end{split}
\end{equation*}
When $(\nu,h)\in R_3$, we can use \autoref{thmL:2parameters}, \autoref{lem:ballistic}, and dominated convergence to get
\begin{equation*}
\begin{split}
\lim_{t\to\infty}Q^{\nu,h}\left[F\left(\frac{X_{\bullet t}}{t}\right)\right]&=\frac{\nu+2h}{2h}\lim_{t\to\infty}E_0^h\left[F\left(\frac{X_{\bullet t}}{t}\right)\exp(\nu S_\infty)\right]\\
&=\frac{\nu+2h}{2h}F\left(h\uk_\bullet\right)E_0^h\left[\exp(\nu S_\infty)\right]\\
&=F\left(h\uk_\bullet\right).
\end{split}
\end{equation*}
Here we used the fact that $S_\infty$ has the Exponential$(-2h)$ distribution under $P_0^h$ which follows from Williams' path decomposition.
\end{proof}
\vs

\begin{proof}[Proof of \eqref{eq:R2,3_scaled}]
We first show that the limit holds in the $R_3$ case and then use duality to transfer this result to the $R_2$ case. Accordingly, suppose $(\nu,h)\in R_3$. By a Girsanov change of measure, we can write
$$E_0\left[F\left(\frac{X_{\bullet t}}{t}\right)\exp\left(\nu S_t+hX_t\right)\right]=\exp\left(\frac{1}{2}h^2 t\right)E_0^h\left[F\left(\frac{X_{\bullet t}}{t}\right)\exp\left(\nu S_t\right)\right].$$
Dividing this by the partition function asymptotic from \autoref{prop:asymptotics} and using \autoref{lem:ballistic} with dominated convergence gives us
\begin{equation*}
\begin{split}
\lim_{t\to\infty}Q_t^{\nu,h}\left[F\left(\frac{X_{\bullet t}}{t}\right)\right]&=\frac{\nu+2h}{2h}\lim_{t\to\infty}E_0^h\left[F\left(\frac{X_{\bullet t}}{t}\right)\exp\left(\nu S_t\right)\right]\\
&=\frac{\nu+2h}{2h}F\big(h\uk_\bullet\big)E_0^h\left[\exp(\nu S_\infty)\right]\\
&=F\big(h\uk_\bullet\big).
\end{split}
\end{equation*}

Now suppose $(\nu,h)\in R_2$. Then $\big(\nu,-(\nu+h)\big)\in R_3$. Hence the invariance property of $\phi$ and the above result imply
\begin{equation*}
\begin{split}
\lim_{t\to\infty}Q_t^{\nu,h}\left[F\left(\frac{X_{\bullet t}}{t}\right)\right]&=\lim_{t\to\infty}Q_t^{\nu,-(\nu+h)}\left[F_\phi\left(\frac{X_{\bullet t}}{t}\right)\right]\\
&=F_\phi\big(-(\nu+h)\uk_\bullet\big)\\
&=F\big((\nu+h)\uk_\bullet\big).
\end{split}
\end{equation*}
\end{proof}

\subsection[L3 case]{$L_3$ case}\label{sec:L_3}

In this section we prove the $L_3$ row in the table from \autoref{thm:theorem_1}. The proof that the limit under $Q^{\nu,h}|_{\mathcal{F}_t}$ is $(\nu+h)\uk$ is identical to that of the $R_2$ case of \eqref{eq:R2,3_Q} since $Q^{\nu,h}=P_0^{\nu+h}$ when $(\nu,h)\in L_3$ by \autoref{thmL:2parameters}. To prove the limit under $Q_t^{\nu,h}|_{\mathcal{F}_t}$ is $-h\uk^U$, we show that
\begin{equation}\label{eq:L3_scaled}
\lim_{t\to\infty}Q_t^{\nu,h}\left[F\left(\frac{X_{\bullet t}}{t}\right)\right]=E\left[F\left(-h\uk_\bullet^U\right)\right]
\end{equation}
for any bounded Lipschitz continuous $F:\mathcal{C}\big([0,1];\R\big)\to\R$ and $(\nu,h)\in L_3$. 

First we need some preliminary results on Bessel(3) bridges and related path decompositions. Let $x,y\geq 0$ and $u>0$. The Bessel(3) bridge of length $u$ from $x$ to $y$ can be represented by
\begin{equation}\label{eq:bes_bridge}
\rt{\left(x+(y-x)\frac{s}{u}+\bk_s^{(1)}\right)^2+\left(\bk_s^{(2)}\right)^2+\left(\bk_s^{(3)}\right)^2},~0\leq s\leq u
\end{equation}
where $\bk^{(i)}$, $i=1,2,3$ are independent Brownian bridges of length $u$ from $0$ to $0$, see \cite{Pit_lec}. If $x,y>0$ and $0<u<1$, then the path $(X_s:0\leq s\leq 1)$ under $P_x$ conditionally given $\{(I_1,X_1,\te_1)=(0,y,u)\}$ can be decomposed into a concatenation of two Bessel(3) bridges. More precisely, the path fragments 
$$(X_s:0\leq s\leq u)\text{ and }(X_s:u\leq s\leq 1)$$
are independent and distributed respectively like
$$(R_s:0\leq s\leq u)\text{ given }\{(R_0,R_u)=(x,0)\}$$
and
$$(R_{s-u}:u\leq s\leq 1)\text{ given }\{(R_0,R_{1-u})=(0,y)\}.$$
This follows from Theorem 2.1.(ii) in \cite{max_decomp} and the discussion in the Introduction of \cite{fp_bridge}. 

\begin{lem}\label{lem:uniform}
Let $x,y>0$ and $0<u<1$. Consider the path $\om_{x,y}^u$ in $\mathcal{C}_0\big([0,1];\R\big)$ that linearly interpolates between the points $(0,0)$, $(u,x)$ and $(1,x-y)$. Specifically, $\om_{x,y}^u$ is given by 
$$\om_{x,y}^u(s) = \left\{
  \begin{array}{ll}
    x\frac{s}{u} & : 0\leq s\leq u\\
    \\
    x-y\frac{s-u}{1-u} & : u<s\leq 1.
  \end{array}
\right.
$$
Suppose $f(t)>0$ for $t>0$ and $\displaystyle\lim_{t\to\infty}f(t)=\infty$. If $F:\mathcal{C}\big([0,1];\R\big)\to\R$ is bounded Lipschitz continuous, then  
$$\lim_{t\to\infty}E_0\left[F\left(\frac{X_\bullet}{f(t)}\right)\middle|S_1=xf(t),S_1-X_1=yf(t),\Te_1=u\right]=F\left(\om_{x,y}^u(\bullet)\right)$$
and the convergence is uniform on $\{(x,y,u)\in \R^3:x>0,y>0,0<u<1\}$.
\end{lem}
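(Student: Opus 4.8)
The plan is to reduce the conditioning on the maximum to the conditioning on the minimum that is handled by the two-Bessel(3)-bridge decomposition quoted just before the lemma, and then to exploit the explicit representation \eqref{eq:bes_bridge} to read off simultaneously the limiting path $\om_{x,y}^u$ and a quantitative rate that is uniform in $(x,y,u)$.

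First I would reflect the path about its terminal maximum level. Setting $Z_s:=xf(t)-X_s$, the process $Z$ is a Brownian motion started at $xf(t)$, and the conditioning event $\{S_1=xf(t),\,S_1-X_1=yf(t),\,\Te_1=u\}$ for $X$ translates into $\{I_1=0,\,Z_1=yf(t),\,\te_1=u\}$ for $Z$. Since $xf(t),yf(t)>0$ and $0<u<1$, the quoted decomposition applies with starting point $xf(t)$ and triple $(0,yf(t),u)$: conditionally, $Z$ is distributed as a concatenation of two independent Bessel(3) bridges, namely a bridge from $xf(t)$ to $0$ on $[0,u]$ and a bridge from $0$ to $yf(t)$ on $[u,1]$. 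As $X_\bullet/f(t)=x-Z_\bullet/f(t)$, it suffices to control the two rescaled bridges.

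Next I would insert the representation \eqref{eq:bes_bridge}. On $[0,u]$, dividing by $f(t)$, the first bridge reads
$$\frac{Z_s}{f(t)}=\rt{\left(x\Big(1-\tfrac{s}{u}\Big)+\frac{\bk^{(1)}_s}{f(t)}\right)^2+\left(\frac{\bk^{(2)}_s}{f(t)}\right)^2+\left(\frac{\bk^{(3)}_s}{f(t)}\right)^2},$$
where the $\bk^{(i)}$ are independent length-$u$ Brownian bridges not depending on $t$, and analogously on $[u,1]$ with linear term $y\frac{s-u}{1-u}$ and length-$(1-u)$ bridges. Applying the reverse triangle inequality for the Euclidean norm on $\R^3$ to the vectors $\big(x(1-s/u)+\bk^{(1)}_s/f(t),\,\bk^{(2)}_s/f(t),\,\bk^{(3)}_s/f(t)\big)$ and $\big(x(1-s/u),0,0\big)$ yields the pointwise bound $\big|Z_s/f(t)-x(1-s/u)\big|\le f(t)^{-1}\rt{\sum_i(\bk^{(i)}_s)^2}$, and the essential point is that this bound does not involve $x$. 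Since $x-x(1-s/u)=xs/u=\om_{x,y}^u(s)$ on $[0,u]$ (with the obvious analogue on $[u,1]$), the discrepancy $X_\bullet/f(t)-\om_{x,y}^u$ is controlled in supremum norm by $f(t)^{-1}$ times a sum of supremum norms of the six bridges.

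The main obstacle is the required uniformity over $x>0$, $y>0$, $0<u<1$, and this is exactly what the $x$-free bound above provides. Taking the supremum over $s$ and then expectations, and using that a Brownian bridge of length $u\le 1$ (resp.\ $1-u\le 1$) is stochastically dominated in supremum norm by a standard length-one bridge, gives $E\big[\,\|X_\bullet/f(t)-\om_{x,y}^u\|\mid\cdots\big]\le C/f(t)$ with a universal constant $C=6\,E\|\bk\|$ that is independent of $(x,y,u)$. Finally, because $F$ is bounded and Lipschitz with constant $L$, I conclude $\big|E[F(X_\bullet/f(t))\mid\cdots]-F(\om_{x,y}^u)\big|\le L\,E\big[\,\|X_\bullet/f(t)-\om_{x,y}^u\|\mid\cdots\big]\le LC/f(t)\to 0$ uniformly in $(x,y,u)$, which is the assertion. (For a fixed $(x,y,u)$ one could instead invoke bounded convergence together with the almost-sure uniform vanishing of the bridge fluctuations, but the Lipschitz hypothesis is what upgrades this to the uniform statement.)
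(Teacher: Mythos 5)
Your proposal is correct and follows essentially the same route as the paper's proof: reflect and translate to convert the maximum-conditioning into the minimum-conditioning covered by the quoted two-Bessel(3)-bridge decomposition, insert the representation \eqref{eq:bes_bridge}, apply the reverse triangle inequality to get an $x,y$-free bound on the sup-norm discrepancy, remove the $u$-dependence via Brownian scaling of the bridge sup norms, and finish with the Lipschitz constant to obtain the uniform rate $O(1/f(t))$. No gaps; even your closing remark that the Lipschitz hypothesis is what upgrades pointwise to uniform convergence matches the paper's logic.
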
\vs

\begin{proof}
Using the translation and reflection symmetries of Wiener measure, we can write 
$$E_0\left[F\left(\frac{X_\bullet}{f(t)}\right)\middle|S_1=xf(t),S_1-X_1=yf(t),\Te_1=u\right]=E_{xf(t)}\left[F\left(x-\frac{X_\bullet}{f(t)}\right)\middle|I_1=0,X_1=yf(t),\te_1=u\right].$$
Together with \eqref{eq:bes_bridge} and the path decomposition noted above, this implies
\begin{equation}\label{eq:L3_bridges}
E_0\left[F\left(\frac{X_\bullet}{f(t)}\right)\middle|S_1=xf(t),S_1-X_1=yf(t),\Te_1=u\right]=E\left[F\left(x-\frac{Y^{(t)}_\bullet}{f(t)}\right)\right]
\end{equation}
where $Y^{(t)}$ is defined by
$$Y_s^{(t)}: = \left\{
  \begin{array}{ll}
   \rt{\left(xf(t)\frac{u-s}{u}+\bk_s^{(1)}\right)^2+\left(\bk_s^{(2)}\right)^2+\left(\bk_s^{(3)}\right)^2} & : 0\leq s\leq u\\
    \\
   \rt{\left(yf(t)\frac{s-u}{1-u}+\bk_{s-u}^{(4)}\right)^2+\left(\bk_{s-u}^{(5)}\right)^2+\left(\bk_{s-u}^{(6)}\right)^2} & : u<s\leq 1.
  \end{array}
\right.
$$
Here $\bk^{(i)}$, $1\leq i\leq 6$ are independent Brownian bridges from $0$ to $0$ of length $u$ or $1-u$ as applicable. Let $\|\cdot\|_2$ denote the Euclidean norm on $\R^3$. Then we have
$$\left|\om_{x,y}^u(s)-\left(x-\frac{Y_s^{(t)}}{f(t)}\right)\right|=\left\{
  \begin{array}{ll}
    \Bigg|\frac{1}{f(t)}\left\|\left(xf(t)\frac{u-s}{u}+\bk_s^{(1)},\bk_s^{(2)},\bk_s^{(3)}\right)\right\|_2-\left\|\Big(x\frac{u-s}{u},0,0\Big)\right\|_2\Bigg| & : 0\leq s\leq u\\
    \\
    \Bigg|\frac{1}{f(t)}\left\|\left(yf(t)\frac{s-u}{1-u}+\bk_{s-u}^{(4)},\bk_{s-u}^{(5)},\bk_{s-u}^{(6)}\right)\right\|_2-\left\|\left(y\frac{s-u}{1-u},0,0\right)\right\|_2\Bigg| & : u<s\leq 1.
  \end{array}
\right.
$$
Now notice that the reverse triangle inequality implies
\begin{equation}\label{eq:piece_bound}
\left|\om_{x,y}^u(s)-\left(x-\frac{Y_s^{(t)}}{f(t)}\right)\right|\leq\left\{
  \begin{array}{ll}
    \frac{1}{f(t)}\left\|\left(\bk_s^{(1)},\bk_s^{(2)},\bk_s^{(3)}\right)\right\|_2 & : 0\leq s\leq u\\
    \\
    \frac{1}{f(t)}\left\|\left(\bk_{s-u}^{(4)},\bk_{s-u}^{(5)},\bk_{s-u}^{(6)}\right)\right\|_2 & : u<s\leq 1.
  \end{array}
\right.
\end{equation}
Suppose $F$ has Lipschitz constant $K$. Then it follows from \eqref{eq:piece_bound} and subadditivity of the square root function that
\begin{equation}\label{eq:L3_Lip}
\Bigg|F\left(\om_{x,y}^u(\bullet)\right)-E\left[F\left(x-\frac{Y_\bullet^{(t)}}{f(t)}\right)\right]\Bigg|\leq\frac{K}{f(t)}\sum_{i=1}^6 E\Bigg[\left\|\bk_\bullet^{(i)}\right\|\Bigg].
\end{equation}
This bound is uniform in $x$ and $y$ but has an implicit dependence on $u$. We can easily remedy this situation by noting that Brownian scaling implies that the expected value of the uniform norm of a Brownian bridge from $0$ to $0$ of length $u$ is an increasing function of $u$. Hence we can write
$$\sum_{i=1}^6E\Bigg[\left\|\bk_\bullet^{(i)}\right\|\Bigg]\leq 6E\Big[\|\bk_\bullet\|\Big]$$
where $\bk$ is a standard Brownian bridge from $0$ to $0$ of length $1$.
This leads to a version of \eqref{eq:L3_Lip} which is uniform on $\{(x,y,u)\in \R^3:x>0,y>0,0<u<1\}$, namely
$$\Bigg|F\left(\om_{x,y}^u(\bullet)\right)-E\left[F\left(x-\frac{Y_\bullet^{(t)}}{f(t)}\right)\right]\Bigg|\leq\frac{6K}{f(t)}E\Big[\|\bk_\bullet\|\Big].$$
Together with \eqref{eq:L3_bridges} this proves the lemma since $f(t)\to\infty$ as $t\to\infty$.
\end{proof}

\begin{proof}[Proof of \eqref{eq:L3_scaled}]
Recalling that $\nu=-2h$ on $L_3$, Brownian scaling implies
\begin{equation}\label{eq:L3_h}
E_0\left[F\left(\frac{X_{\bullet t}}{t}\right)\exp\left(\nu S_t+hX_t\right)\right]=E_0\left[F\left(\frac{X_\bullet}{\rt{t}}\right)\exp\left(-h\rt{t}(2S_1-X_1)\right)\right].
\end{equation}
For $0<u<1$, define
$$f_t(x,y,u):=\left\{
  \begin{array}{ll}
    E_0\left[F\left(\frac{X_\bullet}{\rt{t}}\right)\middle|S_1=x\rt{t},S_1-X_1=y\rt{t},\Te_1=u\right] & : x,y>0\\
    \\
    0 & : \text{otherwise}.
  \end{array}
\right.$$
Using the tri-variate density \eqref{eq:tri_var}, the right-hand side of \eqref{eq:L3_h} can be written as
$$\int_0^1\int_0^\infty\int_0^\infty\frac{xy~f_t\left(\frac{x}{\rt{t}},\frac{y}{\rt{t}},u\right)}{\pi\rt{u^3(1-u)^3}}\exp\left(-h\rt{t}(x+y)-\frac{x^2}{2u}-\frac{y^2}{2(1-u)}\right)\dd{x}\dd{y}\dd{u}.$$
Applying the change of variables $x\mapsto x\rt{u}-uh\rt{t}$ and $y\mapsto y\rt{1-u}-(1-u)h\rt{t}$ results in
\begin{equation}\label{eq:L3_vars}
\int_0^1\int_{h\rt{ut}}^\infty\int_{h\rt{(1-u)t}}^\infty\tilde{f}_t(x,y,u)~g_t(x,y,u)\frac{h^2 t}{\pi}\exp\left(-\frac{x^2+y^2}{2}+\frac{1}{2}h^2 t\right)\dd{x}\dd{y}\dd{u}
\end{equation}
where we defined
$$\tilde{f}_t(x,y,u):=f_t\left(\frac{x\rt{u}}{\rt{t}}-uh,\frac{y\rt{1-u}}{\rt{t}}-(1-u)h,u\right)$$
and
$$g_t(x,y,u):=\frac{\left(x-h\rt{ut}\right)\left(y-h\rt{(1-u)t}\right)}{h^2 t\rt{u(1-u)}}.$$
Now we divide \eqref{eq:L3_vars} by the $L_3$ partition function asymptotic from \autoref{prop:asymptotics} which gives
\begin{equation}\label{eq:L3_Fatou}
\int_0^1\int_{-\infty}^\infty\int_{-\infty}^\infty\tilde{f}_t(x,y,u)~g_t(x,y,u)~1_{A_t}(x,y)~\frac{1}{2\pi}\exp\left(-\frac{x^2+y^2}{2}\right)\dd{x}\dd{y}\dd{u}
\end{equation}
where we defined
$$A_t:=\left\{(x,y):x>h\rt{ut},y>h\rt{(1-u)t}\right\}.$$
At this stage we want to to find the limit of \eqref{eq:L3_Fatou} as $t\to\infty$ by appealing to \autoref{lem:Fatou} with $\mu$ being the probability measure on $\R\times\R\times [0,1]$ having density $\frac{1}{2\pi}\exp\left(-\frac{x^2+y^2}{2}\right)$. In this direction, note that $\tilde{f}_t$ is bounded and the fact that the convergence in \autoref{lem:uniform} is uniform and $(x,y,u)\mapsto F\left(\om_{x,y}^u(\bullet)\right)$ is continuous on $\{(x,y,u)\in \R^3:x>0,y>0,0<u<1\}$ implies that 
\begin{equation*}
\begin{split}  
\lim_{t\to\infty}\tilde{f}_t(x,y,u)&=F\left(\om_{-hu,-h(1-u)}^u(\bullet)\right)\\
																													 &=F\left(-h\uk^u_\bullet\right)
\end{split}
\end{equation*}
$\mu$-almost surely. Additionally, $g_t 1_{A_t}$ is non-negative and converges $\mu$-almost surely to $1$ as $t\to\infty$. Lastly, by reversing the steps that led from \eqref{eq:L3_h} to \eqref{eq:L3_Fatou}, we see that
\begin{equation*}
\begin{split}
\lim_{t\to\infty}\int_0^1\int_{-\infty}^\infty\int_{-\infty}^\infty g_t(x,y,u)~1_{A_t}(x,y)~\frac{1}{2\pi}\exp\left(-\frac{x^2+y^2}{2}\right)\dd{x}\dd{y}\dd{u}&=\lim_{t\to\infty}\frac{E_0\left[\exp\left(\nu S_t+hX_t\right)\right]}{2h^2 t\exp\left(\frac{1}{2}h^2 t\right)}\\
&=1.
\end{split}
\end{equation*}
Hence by \autoref{lem:Fatou} we can conclude that 
\begin{equation*}
\begin{split}
\lim_{t\to\infty}Q_t^{\nu,h}\left[F\left(\frac{X_{\bullet t}}{t}\right)\right]&=\int_0^1\int_{-\infty}^\infty\int_{-\infty}^\infty F\left(-h\uk_\bullet^u\right)\frac{1}{2\pi}\exp\left(-\frac{x^2+y^2}{2}\right)\dd{x}\dd{y}\dd{u}\\
&=\int_0^1 F\left(-h\uk_\bullet^u\right)\dd{u}\\
&=E\left[F\left(-h\uk_\bullet^U\right)\right].
\end{split}
\end{equation*}
\end{proof}

\subsection[R1 case]{$R_1$ case}\label{sec:R_1}

In this section we prove the $R_1$ row in the table from \autoref{thm:theorem_1}. That the limit under $Q^{\nu,h}|_{\mathcal{F}_t}$ is $-R$ follows from \autoref{thmL:2parameters} and \autoref{thmL:scaled} since $Q^{\nu,h}=Q^{\nu+h}$ with $\nu+h<0$ when $(\nu,h)\in R_1$. To prove the limit under $Q_t^{\nu,h}|_{\mathcal{F}_t}$ is $-\ek$, we show that 
\begin{equation}\label{eq:R_1}
\lim_{t\to\infty}Q_t^{\nu,h}\left[F\left(\frac{X_{\bullet t}}{\rt{t}}\right)\right]=E\left[F(-\ek_\bullet)\right]
\end{equation}
for any bounded continuous $F:\mathcal{C}\big([0,1];\R\big)\to\R$ and $(\nu,h)\in R_1$.

\begin{proof}[Proof of \eqref{eq:R_1}]
From Brownian scaling, we have
\begin{equation}\label{eq:R1_scaled}
E_0\left[F\left(\frac{X_{\bullet t}}{\rt{t}}\right)\exp\left(\nu S_t+hX_t\right)\right]=E_0\left[F\left(X_\bullet\right)\exp\left(\nu\rt{t}S_1+h\rt{t}X_1\right)\right].
\end{equation}
Since $\nu<0$, we can write
$$\exp\left(\nu\rt{t}S_1\right)=-\nu\rt{t}\int_0^\infty\exp\left(\nu\rt{t}x\right)1_{S_1<x}\dd{x}.$$
Hence the right-hand side of \eqref{eq:R1_scaled} can be written as
\begin{align}
-\nu&\rt{t}\int_0^\infty\exp\left(\nu\rt{t}x\right)E_0\left[F\left(X_\bullet\right)\exp\left(h\rt{t}X_1\right);S_1<x\right]\dd{x}\nonumber\\
&=-\nu\rt{t}\int_0^\infty\exp\left(\nu\rt{t}x\right)E_0\left[F\left(-X_\bullet\right)\exp\left(-h\rt{t}X_1\right);I_1>-x\right]\dd{x}\nonumber\\
\label{eq:R1_symmetries}
&=-\nu\rt{t}\int_0^\infty\exp\left((\nu+h)\rt{t}x\right)E_x\left[F\left(x-X_\bullet\right)\exp\left(-h\rt{t}X_1\right);I_1>0\right]\dd{x}
\end{align}
where the two equalities follow from the reflection and translation symmetries of Wiener measure.
The $h$-transform representation of the Bessel(3) path measure from \autoref{prop:h_abs} can be used to rewrite the expectation appearing in \eqref{eq:R1_symmetries} in terms of a Bessel(3) process $(R_s:0\leq s\leq 1)$. This leads to 
$$-\nu\rt{t}\int_0^\infty\exp\left((\nu+h)\rt{t}x\right)E_x\left[F\left(x-R_\bullet\right)\exp\left(-h\rt{t}R_1\right)\frac{x}{R_1}\right]\dd{x}.$$
Next we disintegrate the Bessel(3) path measure into a mixture of Bessel(3) bridge measures by conditioning on the endpoint of the path. Refer to Proposition 1 in \cite{bridges1} for a precise statement of a more general result. See also Theorem 1 in \cite{bridges3} where weak continuity of the bridge measures with respect to their starting and ending points is established. This results in 
\begin{equation}\label{eq:R1_disint}
-\nu\rt{t}\int_0^\infty\int_0^\infty\exp\left((\nu+h)\rt{t}x-h\rt{t}y\right)\frac{x}{y}E_x\left[F\left(x-R_\bullet\right)\middle|R_1=y\right]P_x(R_1\in\dd{y})\dd{x}.
\end{equation}
Using the Bessel(3) transition density formula \eqref{eq:Besx_dens}, we can now write \eqref{eq:R1_disint} as
$$-\nu\rt{\frac{2t}{\pi}}\int_0^\infty\int_0^\infty\exp\left((\nu+h)\rt{t}x-h\rt{t}y\right)f(x,y)\sinh(xy)\exp\left(-\frac{x^2+y^2}{2}\right)\dd{y}\dd{x}$$
where we defined 
$$f(x,y):=E_x\left[F\left(x-R_\bullet\right)\middle|R_1=y\right].$$
Applying the change of variables $x\mapsto x/\rt{t}$ and $y\mapsto y/\rt{t}$ gives
\begin{equation}\label{eq:gamma}
-\frac{\nu}{h^2(\nu+h)^2}\rt{\frac{2}{\pi t}}\int_0^\infty\int_0^\infty g(x,y)f\left(\frac{x}{\rt{t}},\frac{y}{\rt{t}}\right)\frac{1}{xy}\sinh\left(\frac{xy}{t}\right)\exp\left(-\frac{x^2+y^2}{2t}\right)\dd{y}\dd{x}
\end{equation}
where we defined
$$g(x,y):=h^2(\nu+h)^2 xy\exp\big((\nu+h)x-hy\big).$$
After dividing \eqref{eq:gamma} by the $R_1$ partition function asymptotic from \autoref{prop:asymptotics}, we see that showing
\begin{equation}\label{eq:R1_limit}
\lim_{t\to\infty}\int_0^\infty\int_0^\infty g(x,y)f\left(\frac{x}{\rt{t}},\frac{y}{\rt{t}}\right)\frac{t}{xy}\sinh\left(\frac{xy}{t}\right)\exp\left(-\frac{x^2+y^2}{2t}\right)\dd{y}\dd{x}=E\left[F(-\ek_\bullet)\right]
\end{equation}
will prove \eqref{eq:R_1}. Notice that for all $x,y>0$, the limit
$$\lim_{t\to\infty}f\left(\frac{x}{\rt{t}},\frac{y}{\rt{t}}\right)\frac{t}{xy}\sinh\left(\frac{xy}{t}\right)\exp\left(-\frac{x^2+y^2}{2t}\right)=E\left[F(-\ek_\bullet)\right]$$
follows from the weak continuity of the bridge measures with respect to their starting and ending points which was noted above and the fact that a normalized Brownian excursion is simply a Bessel(3) bridge from $0$ to $0$ of unit length. Additionally, the convexity of $\sinh$ on $[0,1]$ along with the inequality $2xy\leq x^2+y^2$ leads to the bound
$$\left|f\left(\frac{x}{\rt{t}},\frac{y}{\rt{t}}\right)\frac{t}{xy}\sinh\left(\frac{xy}{t}\right)\exp\left(-\frac{x^2+y^2}{2t}\right)\right|\leq\|F\|\sinh(1)$$
which holds for all $x,y,t>0$. Noting that $g$ is a probability density, \eqref{eq:R1_limit} now follows from bounded convergence.
\end{proof}

\section[Proof of Theorem 1.2]{Proof of \autoref{thm:theorem_2}}\label{sec:theorem_2}

In this section we prove \autoref{thm:theorem_2} by showing that the following limits hold for any bounded continuous $F:\mathcal{C}\big([0,1];\R\big)\to\R$:
\begin{equation}\label{eq:FCLT_Q}
\text{if } (\nu,h)\in R_2\cup L_3\text{, then }\lim_{t\to\infty}Q^{\nu,h}\left[F\left(\frac{X_{\bullet t}-(\nu+h)t\uk_\bullet}{\rt{t}}\right)\right]=E_0\left[F(X_\bullet)\right],
\end{equation}
\vs
\begin{equation}\label{eq:FCLT_Qdelta}
\text{if } (\nu,h)\in R_3\text{, then }\lim_{t\to\infty}Q^{\nu,h}\left[F\left(\frac{X_{\bullet t}-ht\uk_\bullet}{\rt{t}}\right)\right]=E_0\left[F(X_\bullet)\right],
\end{equation}
\vs
\begin{equation}\label{eq:FCLT_scaled}
\left.
\begin{array}{ll}
\text{if } (\nu,h)\in R_2\text{, then }
\displaystyle\lim_{t\to\infty}Q_t^{\nu,h}\left[F\left(\frac{X_{\bullet t}-(\nu+h)t\uk_\bullet}{\rt{t}}\right)\right]\\
\\
\text{if } (\nu,h)\in R_3\text{, then }
\displaystyle\lim_{t\to\infty}Q_t^{\nu,h}\left[F\left(\frac{X_{\bullet t}-ht\uk_\bullet}{\rt{t}}\right)\right]\\
\end{array}
\right\}
=E_0\left[F(X_\bullet)\right].
\end{equation}

\begin{proof}[Proof of \eqref{eq:FCLT_Q}]
If $(\nu,h)\in R_2\cup L_3$, we can use \autoref{thmL:2parameters}, a path transformation that adds drift $\nu+h$, and Brownian scaling to write 
\begin{equation*}
\begin{split}
Q^{\nu,h}\left[F\left(\frac{X_{\bullet t}-(\nu+h)t\uk_\bullet}{\rt{t}}\right)\right]&=E_0^{\nu+h}\left[F\left(\frac{X_{\bullet t}-(\nu+h)t\uk_\bullet}{\rt{t}}\right)\right]\\
&=E_0\left[F\left(\frac{X_{\bullet t}}{\rt{t}}\right)\right]\\
&=E_0\left[F\left(X_\bullet\right)\right]
\end{split}
\end{equation*}
from which the desired limit follows. 
\end{proof}

An idea that will be helpful for the proof of \eqref{eq:FCLT_Qdelta} and also in the next section is to modify the path functional $F$ in such a way so that it ``ignores" the beginning of the path. After proving a limit theorem for the modified path functional, we lift this result to the original functional by controlling the error arising from the modification. Here we state some definitions and notation that make this procedure precise. 
\begin{definition}\label{def:F_delta}
For $0<\de\leq 1$ and $X_\bullet\in \mathcal{C}\big([0,1];\R\big)$, define $\Pi_\de:\mathcal{C}\big([0,1];\R\big)\to \mathcal{C}_0\big([0,1];\R\big)$ by 
$$\Pi_\de X_s = \left\{
  \begin{array}{ll}
    \frac{X_\de}{\de}s & : 0\leq s<\de\\
    \\
    X_s & : \de\leq s\leq 1.
  \end{array}
  \right.
$$
For $F:\mathcal{C}\big([0,1];\R\big)\to\R$, let $F_\de$ denote $F\circ\Pi_\de$ and define $\De_\de^F:\mathcal{C}\big([0,1];\R\big)\to\R_+$ by
$$\De_\de^F(X_\bullet)=|F_\de(X_\bullet)-F(X_\bullet)|.$$
\end{definition}
\no So $\Pi_\de$ replaces the initial $[0,\de]$ segment of the path $X_\bullet$ with a straight line which interpolates between the points $(0,0)$ and $(\de,X_\de)$ while $\De_\de^F$ is the absolute error that results from using $F_\de$ instead of $F$. The Markov property implies that under $P_0$, the random variable $F_\de(X_\bullet)$ is independent of the initial $[0,\de]$ part of the path $X_\bullet$ after conditioning on $X_\de$. Note that if $F$ is bounded continuous then so are $F_\de$ and $\De_\de^F$ with $\|F_\de\|\leq\|F\|$ and $\|\De_\de^F\|\leq 2\|F\|$. Also notice that $\displaystyle\lim_{\de\searrow 0}\De_\de^F(X_\bullet)=0$ for any $X_\bullet\in \mathcal{C}_0\big([0,1];\R\big)$ whenever $F$ is continuous.

\begin{proof}[Proof of \eqref{eq:FCLT_Qdelta}]
We divide the proof into two stages, the first for $F_\de$ and the second for $F$.\vs
\begin{enumerate}[label=\textbf{\arabic*.}]
\item\label{itm:stage_1}
\no\textbf{\textit{Convergence for $\boldsymbol{F_\de}$}}
\vs

Suppose $(\nu,h)\in R_3$ and fix $0<\de\leq 1$. In this case \autoref{thmL:2parameters} implies 
\begin{equation}\label{eq:R3_QP}
Q^{\nu,h}\left[F_\de\left(\frac{X_{\bullet t}-ht\uk_\bullet}{\rt{t}}\right)\right]=\frac{\nu+2h}{2h}E_0^h\left[F_\de\left(\frac{X_{\bullet t}-ht\uk_\bullet}{\rt{t}}\right)\exp(\nu S_\infty)\right].
\end{equation}
We can rewrite the expectation appearing on the right-hand side of \eqref{eq:R3_QP} as
$$\underbrace{E_0^h\left[F_\de\left(\frac{X_{\bullet t}-ht\uk_\bullet}{\rt{t}}\right)\exp(\nu S_{\de t});\Te_\infty\leq \de t\right]}_{A_t}+\underbrace{E_0^h\left[F_\de\left(\frac{X_{\bullet t}-ht\uk_\bullet}{\rt{t}}\right)\exp(\nu S_\infty);\Te_\infty>\de t\right]}_{B_t} .$$
Since $(\nu,h)\in R_3$, we know from Williams' decomposition that $\exp(\nu S_\infty)$ is integrable and ${\Te_\infty<\infty}$ almost surely under $P_0^h$. Hence by dominated convergence we have $B_t=o(1)$ as $t\to\infty$ and consequently
$$A_t=E_0^h\left[F_\de\left(\frac{X_{\bullet t}-ht\uk_\bullet}{\rt{t}}\right)\exp(\nu S_\infty)\right]+o(1)\text{ as }t\to \infty.$$
Similarly, we can show that
$$A_t=E_0^h\left[F_\de\left(\frac{X_{\bullet t}-ht\uk_\bullet}{\rt{t}}\right)\exp(\nu S_{\de t})\right]+o(1)\text{ as }t\to \infty.$$
Together, these imply
\begin{equation}\label{eq:R3_uniform}
E_0^h\left[F_\de\left(\frac{X_{\bullet t}-ht\uk_\bullet}{\rt{t}}\right)\exp(\nu S_\infty)\right]=E_0^h\left[F_\de\left(\frac{X_{\bullet t}-ht\uk_\bullet}{\rt{t}}\right)\exp(\nu S_{\de t})\right]+o(1)\text{ as }t\to \infty.
\end{equation}
Applying a path transformation that adds drift $h$ on the right-hand side of \eqref{eq:R3_uniform} and using Brownian scaling results in
$$\resizebox{.9\hsize}{!}{$\displaystyle E_0^h\left[F_\de\left(\frac{X_{\bullet t}-ht\uk_\bullet}{\rt{t}}\right)\exp(\nu S_\infty)\right]=E_0\left[F_\de\left(X_\bullet\right)\exp\left(\nu\rt{t}\sup_{0\leq s\leq \de}\left\{X_s+h\rt{t}s\right\}\right)\right]+o(1)\text{ as }t\to \infty.$}$$
Combining this with \eqref{eq:R3_QP}, we have established that
\begin{equation}\label{eq:R3_Q}
\resizebox{.85\hsize}{!}{$\displaystyle Q^{\nu,h}\left[F_\de\left(\frac{X_{\bullet t}-ht\uk_\bullet}{\rt{t}}\right)\right]=\frac{\nu+2h}{2h}E_0\left[F_\de\left(X_\bullet\right)\exp\left(\nu\rt{t}\sup_{0\leq s\leq \de}\left\{X_s+h\rt{t}s\right\}\right)\right]+o(1)\text{ as }t\to \infty.$}
\end{equation}
Now notice that $F_\de\left(X_\bullet\right)$ and $\displaystyle\sup_{0\leq s\leq\de}\left\{X_s+h\rt{t}s\right\}$ are independent after conditioning on $X_\de$. So with $p_\de(\cdot,\cdot)$ denoting the transition density of Brownian motion at time $\de$, we see that the expectation appearing on the right-hand side of \eqref{eq:R3_Q} is equal to
\begin{equation}\label{eq:R3_decouple}
\int_{-\infty}^\infty E_0\left[F_\de(X_\bullet)\middle|X_\de=x\right]E_0\left[\exp\left(\nu\rt{t}\sup_{0\leq s\leq\de}\left\{X_s+h\rt{t}s\right\}\right)\middle|X_\de=x\right]p_\de(0,x)\dd{x}.
\end{equation}
From the particular pathwise construction of Brownian bridge given in (5.6.29) of \cite{K&S}, it follows that the distribution of Brownian bridge plus a constant drift is the same as Brownian bridge with an appropriately shifted endpoint. Hence the second expectation appearing inside the integral in \eqref{eq:R3_decouple} is seen to equal 
$$E_0\left[\exp\left(\nu\rt{t}\sup_{0\leq s\leq \de}X_s\right)\middle|X_\de=x+h\rt{t}\de\right].$$
By using the distribution of the maximum of a Brownian bridge from \eqref{eq:bridge_max}, this expectation has the integral representation 
$$\int_0^\infty \frac{4y-2x-2h\rt{t}\de}{\de}\exp\left(\nu\rt{t}y-\frac{2y(y-x-h\rt{t}\de)}{\de}\right)\dd{y}$$
when $t$ is large enough such that $x+h\rt{t}\de\leq 0$.
After some manipulations and recalling that $\nu+2h<0$, we can use Watson's lemma to compute the limit of this integral which holds for all $x\in\R$:
$$\lim_{t\to\infty}\int_0^\infty \left(\frac{4y-2x}{\de}-2h\rt{t}\right)\exp\left(\frac{2yx-2y^2}{\de}\right)e^{(\nu+2h)\rt{t}y}\dd{y}=\frac{2h}{\nu+2h}.$$
Now we want to invoke \autoref{lem:Fatou} to find the limit of \eqref{eq:R3_decouple} hence we need to verify that 
\begin{equation}\label{eq:R3_check}
\lim_{t\to\infty}\int_{-\infty}^\infty E_0\left[\exp\left(\nu\rt{t}\sup_{0\leq s\leq\de}\left\{X_s+h\rt{t}s\right\}\right)\middle|X_\de=x\right]p_\de(0,x)\dd{x}=\frac{2h}{\nu+2h}.
\end{equation}
By working backwards starting from the left-hand side of \eqref{eq:R3_check} and reversing the conditioning, scaling, and path transformation, this can be reduced to checking 
$$\lim_{t\to\infty}E_0^h\left[\exp\left(\nu S_{\de t}\right)\right]=\frac{2h}{\nu+2h}$$
which follows from dominated convergence and Williams' path decomposition since $(\nu,h)\in R_3$. Now we can evaluate the limit of \eqref{eq:R3_decouple} as
\begin{equation*}
\begin{split}
\lim_{t\to\infty}&\int_{-\infty}^\infty E_0\left[F_\de(X_\bullet)\middle|X_\de=x\right]E_0\left[\exp\left(\nu\rt{t}\sup_{0\leq s\leq\de}\left\{X_s+h\rt{t}s\right\}\right)\middle|X_\de=x\right]p_\de(0,x)\dd{x}\\
&=\frac{2h}{\nu+2h}\int_{-\infty}^\infty E_0\left[F_\de(X_\bullet)\middle|X_\de=x\right]p_\de(0,x)\dd{x}\\
&=\frac{2h}{\nu+2h}E_0\left[F_\de(X_\bullet)\right].
\end{split}
\end{equation*}
Combining this with \eqref{eq:R3_Q} leads to
$$\lim_{t\to\infty}Q^{\nu,h}\left[F_\de\left(\frac{X_{\bullet t}-ht\uk_\bullet}{\rt{t}}\right)\right]=E_0\left[F_\de(X_\bullet)\right]$$
as desired.
\vs
\item\label{itm:stage_2}
\no\textbf{\textit{Convergence for $\boldsymbol{F}$}}
\vs

Recall the notation $\De_\de^F$ from \autoref{def:F_delta}. Using the triangle inequality, we can write for all $t$
\begin{equation*}
\begin{split}
&\left|Q^{\nu,h}\left[F\left(\frac{X_{\bullet t}-ht\uk_\bullet}{\rt{t}}\right)\right]-E_0\left[F(X_\bullet)\right]\right|\\
&\leq Q^{\nu,h}\left[\De_\de^F\left(\frac{X_{\bullet t}-ht\uk_\bullet}{\rt{t}}\right)\right]+\Bigg|Q^{\nu,h}\left[F_\de\left(\frac{X_{\bullet t}-ht\uk_\bullet}{\rt{t}}\right)\right]-E_0\left[F_\de(X_\bullet)\right]\Bigg|+E_0\left[\De_\de^F(X_\bullet)\right].
\end{split}
\end{equation*}
The result from stage \ref{itm:stage_1} implies that the middle term on the right-hand side of this inequality vanishes as $t\to\infty$ so
$$\limsup_{t\to\infty}\left|Q^{\nu,h}\left[F\left(\frac{X_{\bullet t}-ht\uk_\bullet}{\rt{t}}\right)\right]-E_0\left[F(X_\bullet)\right]\right|\leq \limsup_{t\to\infty} Q^{\nu,h}\left[\De_\de^F\left(\frac{X_{\bullet t}-ht\uk_\bullet}{\rt{t}}\right)\right]+E_0\left[\De_\de^F(X_\bullet)\right].$$
Since $F$ is bounded continuous, we know that the last term on the right-hand side of this inequality vanishes as $\de\searrow 0$ by bounded convergence. This leads to
\begin{equation}\label{eq:delta_error}
\limsup_{t\to\infty}\left|Q^{\nu,h}\left[F\left(\frac{X_{\bullet t}-ht\uk_\bullet}{\rt{t}}\right)\right]-E_0\left[F(X_\bullet)\right]\right|\leq \lim_{\de\searrow 0}\limsup_{t\to\infty} Q^{\nu,h}\left[\De_\de^F\left(\frac{X_{\bullet t}-ht\uk_\bullet}{\rt{t}}\right)\right].
\end{equation}
Using \autoref{thmL:2parameters}, we can express the right-hand side of \eqref{eq:delta_error} as
\begin{equation}\label{eq:error_bound}
\frac{\nu+2h}{2h}\lim_{\de\searrow 0}\limsup_{t\to\infty}E_0^h\left[\De_\de^F\left(\frac{X_{\bullet t}-ht\uk_\bullet}{\rt{t}}\right)\exp(\nu S_\infty)\right].
\end{equation}
Since $h<-\frac{1}{2}\nu$ when $(\nu,h)\in R_3$, we can find $p>1$ such that $-2h>p\nu$. Let $q$ be the H\"{o}lder conjugate of $p$. Then H\"{o}lder's inequality implies \eqref{eq:error_bound} is bounded above by
\begin{equation*}
\begin{split}
\frac{\nu+2h}{2h}&\lim_{\de\searrow 0}\limsup_{t\to\infty}E_0^h\left[\left(\De_\de^F\left(\frac{X_{\bullet t}-ht\uk_\bullet}{\rt{t}}\right)\right)^q\right]^\frac{1}{q}E_0^h[\exp(p\nu S_\infty)]^\frac{1}{p}\\
&=\frac{\nu+2h}{2h}\lim_{\de\searrow 0}E_0\left[\left(\De_\de^F\left(X_\bullet\right)\right)^q\right]^\frac{1}{q}\left(\frac{2h}{pv+2h}\right)^\frac{1}{p}\\
&=0.
\end{split}
\end{equation*}
Here we used a path transformation that adds drift $h$ along with Brownian scaling to eliminate $t$ from the first expectation and used Williams' decomposition to compute the second expectation. Now it follows that 
$$\limsup_{t\to\infty}\left|Q^{\nu,h}\left[F\left(\frac{X_{\bullet t}-ht\uk_\bullet}{\rt{t}}\right)\right]-E_0\left[F(X_\bullet)\right]\right|=0$$
which proves \eqref{eq:FCLT_Qdelta}.
\end{enumerate}
\end{proof}

\begin{proof}[Proof of \eqref{eq:FCLT_scaled}]
We first show that the limit holds in the $R_3$ case and then use duality to transfer this result to the $R_2$ case. Accordingly, suppose $(\nu,h)\in R_3$. Using a Girsanov change of measure, we can write
\begin{equation}\label{eq:FCLT_Girs}
E_0\left[F\left(\frac{X_{\bullet t}-ht\uk_\bullet}{\rt{t}}\right)\exp\left(\nu S_t+hX_t\right)\right]=\exp\left(\frac{1}{2}h^2 t\right)E_0^h\left[F\left(\frac{X_{\bullet t}-ht\uk_\bullet}{\rt{t}}\right)\exp\left(\nu S_t\right)\right].
\end{equation}
By repeating the same argument that led to \eqref{eq:R3_uniform} while using $F$ instead of $F_\de$, we can establish that 
$$E_0^h\left[F\left(\frac{X_{\bullet t}-ht\uk_\bullet}{\rt{t}}\right)\exp(\nu S_t)\right]=E_0^h\left[F\left(\frac{X_{\bullet t}-ht\uk_\bullet}{\rt{t}}\right)\exp(\nu S_\infty)\right]+o(1)\text{ as }t\to \infty.$$
Combining this with \eqref{eq:FCLT_Girs} and using the partition function asymptotic from \autoref{prop:asymptotics} results in 
\begin{equation*}
\begin{split}
\lim_{t\to\infty}Q_t^{\nu,h}\left[F\left(\frac{X_{\bullet t}-ht\uk_\bullet}{\rt{t}}\right)\right]&=\lim_{t\to\infty}\frac{\nu+2h}{2h}E_0^h\left[F\left(\frac{X_{\bullet t}-ht\uk_\bullet}{\rt{t}}\right)\exp(\nu S_\infty)\right]\\
&=\lim_{t\to\infty}Q^{\nu,h}\left[F\left(\frac{X_{\bullet t}-ht\uk_\bullet}{\rt{t}}\right)\right]\\
&=E_0[F(X_\bullet)]
\end{split}
\end{equation*}
where the last two equalities follow from \autoref{thmL:2parameters} and \eqref{eq:FCLT_Qdelta}, respectively.

Now suppose $(\nu,h)\in R_2$. Then $\big(\nu,-(\nu+h)\big)\in R_3$. Hence the invariance property of $\phi$ and the above result imply
\begin{equation*}
\begin{split}
\lim_{t\to\infty}Q_t^{\nu,h}\left[F\left(\frac{X_{\bullet t}-ht\uk_\bullet}{\rt{t}}\right)\right]&=\lim_{t\to\infty}Q_t^{\nu,-(\nu+h)}\left[F_\phi\left(\frac{X_{\bullet t}-ht\uk_\bullet}{\rt{t}}\right)\right]\\
&=E_0[F_\phi(X_\bullet)]\\
&=E_0[F(X_\bullet)].
\end{split}
\end{equation*}
\end{proof}

\section[Proof of Theorem 1.3]{Proof of \autoref{thm:theorem_3}}\label{sec:theorem_3}

In this section we prove \autoref{thm:theorem_3} by showing that 
\begin{equation}\label{eq:L3_FCLT}
\lim_{t\to\infty}Q_t^{\nu,h}\left[F\left(\frac{X_{\bullet t}-(2S_{\bullet t}+ht\uk_\bullet)}{\rt{t}}\right)\right]=E_0\left[F(X_\bullet)\right]
\end{equation}
for any bounded Lipschitz continuous $F:\mathcal{C}\big([0,1];\R\big)\to\R$ and $(\nu,h)\in L_3$. 

\begin{proof}[Proof of \eqref{eq:L3_FCLT}]
We divide the proof into two stages, the first for $F_\de$ and the second for $F$.\vs
\begin{enumerate}[label=\textbf{\arabic*.}]
\item
\no\textbf{\textit{Convergence for $\boldsymbol{F_\de}$}}
\vs

Suppose $(\nu,h)\in L_3$ and fix $0<\de\leq 1$. Recalling that $\nu=-2h$, we can use Brownian scaling and Pitman's $2S-X$ theorem to write
\begin{equation}\label{eq:L3_Bessel}
E_0\left[F_\de\left(\frac{X_{\bullet t}-(2S_{\bullet t}+ht\uk_\bullet)}{\rt{t}}\right)\exp\left(\nu S_t+hX_t\right)\right]=E_0\left[F_\de\left(-R_\bullet-h\rt{t}\uk_\bullet\right)\exp\left(-h\rt{t}R_1\right)\right].
\end{equation}
Fix $x>0$. The the absolute continuity relation from \autoref{lem:absolute} implies that the right-hand side of \eqref{eq:L3_Bessel} is equal to 
$$E_x\left[F_\de\left(-R_\bullet-h\rt{t}\uk_\bullet\right)\exp\left(-h\rt{t}R_1\right)\frac{xR_\de\exp\left(\frac{x^2}{2\de}\right)}{\de\sinh\left(\frac{xR_\de}{\de}\right)}\right].$$
Now we can use \autoref{prop:h_abs} to switch from the Bessel(3) process to Brownian motion. Hence the above expectation is equal to
$$E_x\left[F_\de\left(-X_\bullet-h\rt{t}\uk_\bullet\right)\exp\left(-h\rt{t}X_1\right)\frac{X_\de X_1\exp\left(\frac{x^2}{2\de}\right)}{\de\sinh\left(\frac{xX_\de}{\de}\right)};I_1>0\right].$$
Next we use a Girsanov change of measure to add drift $-h\rt{t}$. This results in
$$E_x^{-h\rt{t}}\left[F_\de\left(-X_\bullet-h\rt{t}\uk_\bullet\right)\exp\left(-h\rt{t}x+\frac{1}{2}h^2 t\right)\frac{X_\de X_1\exp\left(\frac{x^2}{2\de}\right)}{\de\sinh\left(\frac{xX_\de}{\de}\right)};I_1>0\right].$$
Applying a path transformation that adds drift $-h\rt{t}$ while changing the measure back to that of Brownian motion without drift yields
$$E_x\left[F_\de\left(-X_\bullet\right)\frac{(X_\de-h\rt{t}\de)(X_1-h\rt{t})\exp\left(\frac{x^2}{2\de}-h\rt{t}x+\frac{1}{2}h^2 t\right)}{\de\sinh\left(\frac{x}{\de}(X_\de-h\rt{t}\de)\right)};\inf_{0\leq s\leq 1}\{X_s-h\rt{t}s\}>0\right].$$
Now we divide this by the $L_3$ partition function asymptotic from \autoref{prop:asymptotics} which gives
\begin{equation}\label{eq:L3}
E_x\left[F_\de\left(-X_\bullet\right)\frac{(X_\de-h\rt{t}\de)(X_1-h\rt{t})\exp\left(\frac{x^2}{2\de}-h\rt{t}x\right)}{2h^2 t\de\sinh\left(\frac{x}{\de}(X_\de-h\rt{t}\de)\right)}1_{A_t}\right]
\end{equation}
where we defined
$$A_t:=\left\{\inf_{0\leq s\leq 1}\{X_s-h\rt{t}s\}>0\right\}.$$
At this point we want to use \autoref{lem:Fatou} to find the limit of \eqref{eq:L3} as $t\to\infty$. Towards this end, note that
$$\frac{(X_\de-h\rt{t}\de)(X_1-h\rt{t})\exp\left(\frac{x^2}{2\de}-h\rt{t}x\right)}{2h^2 t\de\sinh\left(\frac{x}{\de}(X_\de-h\rt{t}\de)\right)}1_{A_t}$$
is non-negative for all $t>0$. Recalling that $x>0$ and $h<0$, we see that almost surely under $P_x$
$$\lim_{t\to\infty}\frac{(X_\de-h\rt{t}\de)(X_1-h\rt{t})\exp\left(\frac{x^2}{2\de}-h\rt{t}x\right)}{2h^2 t\de\sinh\left(\frac{x}{\de}(X_\de-h\rt{t}\de)\right)}1_{A_t}=\exp\left(\frac{x^2-2xX_\de}{2\de}\right).$$
Additionally, by reversing the steps that led from \eqref{eq:L3_Bessel} to \eqref{eq:L3}, we see that    
$$\lim_{t\to\infty}E_x\left[\frac{(X_\de-h\rt{t}\de)(X_1-h\rt{t})\exp\left(\frac{x^2}{2\de}-h\rt{t}x\right)}{2h^2 t\de\sinh\left(\frac{x}{\de}(X_\de-h\rt{t}\de)\right)}1_{A_t}\right]=\lim_{t\to\infty}\frac{E_0\left[\exp\left(\nu S_t+hX_t\right)\right]}{2h^2 t\exp\left(\frac{1}{2}h^2 t\right)}=1.$$
This agrees with
$$E_x\left[\exp\left(\frac{x^2-2xX_\de}{2\de}\right)\right]=1$$
which follows from a routine calculation. Hence we can conclude from \autoref{lem:Fatou}, reflection symmetry of Wiener measure, and \autoref{lem:absolute} that  
\begin{equation*}
\begin{split}
\lim_{t\to\infty}&E_x\left[F_\de\left(-X_\bullet\right)\frac{(X_\de-h\rt{t}\de)(X_1-h\rt{t})\exp\left(\frac{x^2}{2\de}-h\rt{t}x\right)}{2h^2 t\de\sinh\left(\frac{x}{\de}(X_\de-h\rt{t}\de)\right)};\inf_{0\leq s\leq 1}\{X_s-h\rt{t}s\}>0\right]\\
&=E_x\left[F_\de\left(-X_\bullet\right)\exp\left(\frac{x^2-2xX_\de}{2\de}\right)\right]\\
&=E_{-x}\left[F_\de\left(X_\bullet\right)\exp\left(\frac{x^2+2xX_\de}{2\de}\right)\right]\\
&=E_0\left[F_\de\left(X_\bullet\right)\right].
\end{split}
\end{equation*}
This shows that for any $0<\de\leq 1$ we have
$$\lim_{t\to\infty}Q_t^{\nu,h}\left[F_\de\left(\frac{X_{\bullet t}-(2S_{\bullet t}+ht\uk_\bullet)}{\rt{t}}\right)\right]=E_0\left[F_\de(X_\bullet)\right].$$
\vs

\item
\no\textbf{\textit{Convergence for $\boldsymbol{F}$}}
\vs

We proceed as in the beginning of stage \ref{itm:stage_2} in the proof of \eqref{eq:FCLT_Qdelta}. Similarly to \eqref{eq:delta_error} we have
\begin{align}
\limsup_{t\to\infty}&\Bigg|Q_t^{\nu,h}\left[F\left(\frac{X_{\bullet t}-(2S_{\bullet t}+ht\uk_\bullet)}{\rt{t}}\right)\right]-E_0[F(X_\bullet)]\Bigg|\nonumber\\
\label{eq:L3_error}
&\leq\lim_{\de\searrow 0}\limsup_{t\to\infty}Q_t^{\nu,h}\left[\De_\de^F\left(\frac{X_{\bullet t}-(2S_{\bullet t}+ht\uk_\bullet)}{\rt{t}}\right)\right].
\end{align}
Unlike \eqref{eq:error_bound} however, we can't use H\"{o}lder's inequality to get a useful bound for \eqref{eq:L3_error} since $(\nu,h)$ is on the critical line $L_3$. Instead, we make use of the Lipschitz continuity of $F$. Suppose $F$ has Lipschitz constant $K$. Then for any $X_\bullet\in \mathcal{C}\big([0,1];\R\big)$ we have
\begin{equation*}
\begin{split}
\De_\de^F(X_\bullet )=\left|F(\Pi_\de X_\bullet)-F(X_\bullet)\right|&\leq K\|\Pi_\de X_\bullet-X_\bullet\|\\
&\leq 2K\sup_{0\leq s\leq\de}|X_s|.
\end{split}
\end{equation*}
Along with \eqref{eq:L3_Bessel}, this implies that \eqref{eq:L3_error} is bounded above by
\begin{equation}\label{eq:Delta_error}
2K\lim_{\de\searrow 0}\limsup_{t\to\infty}\frac{E_0\left[\displaystyle\sup_{0\leq s\leq\de}\left|R_s+h\rt{t}s\right|\exp\left(-h\rt{t}R_1\right)\right]}{E_0\left[\exp\left(-h\rt{t}R_1\right)\right]}.
\end{equation}
With $p_1(\cdot,\cdot)$ denoting the Bessel(3) transition density at time $1$, we can write the expectation appearing in the numerator of \eqref{eq:Delta_error} as a mixture of Bessel(3) bridges by conditioning on the endpoint
\begin{equation}\label{eq:delta_bridge}
\int_0^\infty E_0\left[\sup_{0\leq s\leq\de}\left|R_s+h\rt{t}s\right|\middle|R_1=y\right]\exp(-h\rt{t}y)p_1(0,y)\dd{y}.
\end{equation}
Let $\|\cdot\|_2$ denote the Euclidean norm on $\R^3$. Using \eqref{eq:bes_bridge} and recalling that $h<0$, we can write the above expectation as
$$E\left[\sup_{0\leq s\leq\de}\Bigg|\left\|\left(\bk_s^{(1)}+ys,\bk_s^{(2)},\bk_s^{(3)}\right)\right\|_2-\left\|\left(|h|\rt{t}s,0,0\right)\right\|_2\Bigg|\right]$$
where $\bk^{(i)}$, $i=1,2,3$ are independent Brownian bridges of length $1$ from $0$ to $0$. Now notice that the reverse triangle inequality implies this is bounded above by
$$E\left[\sup_{0\leq s\leq\de}\left\|\left(\bk_s^{(1)}+\big(y+h\rt{t}\big)s,\bk_s^{(2)},\bk_s^{(3)}\right)\right\|_2\right].$$
Using subadditivity of the square root function and the triangle inequality, this is bounded above by
\begin{equation}\label{eq:delta_bound}
3E\left[\sup_{0\leq s\leq\de}\left|\bk_s\right|\right]+\left|y+h\rt{t}\right|\de.
\end{equation}
By substituting \eqref{eq:delta_bound} for the expectation appearing in \eqref{eq:delta_bridge}, we see that the latter expression is bounded above by
\begin{equation}\label{eq:bridge_bound}
3E\left[\sup_{0\leq s\leq\de}\left|\bk_s\right|\right]E_0\left[\exp\left(-h\rt{t}R_1\right)\right]+\de\int_0^\infty\left|y+h\rt{t}\right|\exp(-h\rt{t}y)p_1(0,y)\dd{y}.
\end{equation}
Now substituting \eqref{eq:bridge_bound} for the expectation appearing in the numerator of \eqref{eq:Delta_error} leads to the upper bound
\begin{equation}\label{eq:Delta_split}
2K\lim_{\de\searrow 0}\left(3E\left[\sup_{0\leq s\leq\de}\left|\bk_s\right|\right]+\de\limsup_{t\to\infty}\frac{\int_0^\infty\left|y+h\rt{t}\right|\exp(-h\rt{t}y)p_1(0,y)\dd{y}}{E_0\left[\exp\left(-h\rt{t}R_1\right)\right]}\right).
\end{equation}
We can evaluate the $\limsup$ term appearing in \eqref{eq:Delta_split} explicitly by using the $L_3$ partition function asymptotic from \autoref{prop:asymptotics} and the Bessel(3) transition density formula \eqref{eq:Bes0_dens} to write
\begin{equation*}
\begin{split}
\lim_{t\to\infty}&\frac{\int_0^\infty\left|y+h\rt{t}\right|\exp(-h\rt{t}y)p_1(0,y)\dd{y}}{E_0\left[\exp\left(-h\rt{t}R_1\right)\right]}\\
&=\lim_{t\to\infty}\int_0^\infty\rt{\frac{2}{\pi}}\frac{\left|y+h\rt{t}\right|y^2}{2h^2 t}\exp\left(-h\rt{t}y-\frac{y^2}{2}-\frac{1}{2}h^2 t\right)\dd{y}.
\end{split}
\end{equation*}
Applying the change of variables $y\mapsto y-h\rt{t}$ and using dominated convergence results in
$$\lim_{t\to\infty}\int_{h\rt{t}}^\infty\rt{\frac{2}{\pi}}\frac{|y|\left(y-h\rt{t}\right)^2}{2h^2 t}\exp\left(-\frac{y^2}{2}\right)\dd{y}=\int_{-\infty}^\infty\frac{1}{\rt{2\pi}}|y|\exp\left(-\frac{y^2}{2}\right)\dd{y}=\rt{\frac{2}{\pi}}.$$
Hence \eqref{eq:Delta_split} equals 
$$2K\lim_{\de\searrow 0}\left(3E\left[\sup_{0\leq s\leq\de}\left|\bk_s\right|\right]+\de\rt{\frac{2}{\pi}}\right)=0.$$
Here we used dominated convergence and the fact that $\displaystyle\sup_{0\leq s\leq 1}\left|\bk_s\right|$ is integrable and $\bk$ is continuous with $\bk_0=0$. Now it follows that 
$$\limsup_{t\to\infty}\Bigg|Q_t^{\nu,h}\left[F\left(\frac{X_{\bullet t}-(2S_{\bullet t}+ht\uk_\bullet)}{\rt{t}}\right)\right]-E_0[F(X_\bullet)]\Bigg|=0$$
which proves \eqref{eq:L3_FCLT}.
\end{enumerate}
\end{proof}

\section{Brownian ascent}\label{sec:ascent}

We informally defined the Brownian ascent as a Brownian path of duration $1$ conditioned on the event $\{X_1=S_1\}$. Since this is a null event, some care is needed to make the conditioning precise. Accordingly, we condition on the event $\{S_1-X_1<\ep\}$ and let $\ep\searrow 0$. This leads to an equality in law between the Brownian ascent and a path transformation of the Brownian meander. While this result along with the other Propositions in this section are likely obvious to those familiar with Brownian path fragments, we include the proofs for the convenience of non-experts. The reader can refer to \autoref{app:appendix} for some basic information on the Brownian meander. 
\begin{prop}\label{prop:ascent}
$$(\ak_s:0\leq s\leq 1)\stackrel{\mathcal{L}}{=}(\mk_1-\mk_{1-s}:0\leq s\leq 1)$$
\end{prop}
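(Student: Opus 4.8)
The plan is to make the conditioning precise and then reduce the statement to a standard weak convergence result for the Brownian meander by exploiting the time-reversal and reflection symmetries of Wiener measure. Fix a bounded continuous $F:\mathcal{C}\big([0,1];\R\big)\to\R$. Following the prescription in the text, I define the law of $\ak$ through the limit
$$E[F(\ak_\bullet)]=\lim_{\ep\searrow 0}E_0\left[F(X_\bullet)\,\middle|\,S_1-X_1<\ep\right]=\lim_{\ep\searrow 0}\frac{E_0[F(X_\bullet);S_1-X_1<\ep]}{P_0(S_1-X_1<\ep)},$$
so that proving the proposition amounts to showing this limit equals $E[F(\mk_1-\mk_{1-\bullet})]$. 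Verifying that the limit exists will be automatic once it is identified with this right-hand side.

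First I would rewrite the conditioning event using the invariance of $P_0$ under the reversal map $\phi$ from \autoref{def:reversal}. Applying $\phi$ to the path $X_\bullet$ (which starts at $0$) one computes $\phi X_s=X_{1-s}-X_1$, $(\phi X)_1=-X_1$, and $S_1(\phi X)=S_1-X_1$, so that $S_1(\phi X)-(\phi X)_1=S_1$. Hence the identity $E_0[H(X_\bullet)]=E_0[H(\phi X_\bullet)]$ applied to $H=F\cdot 1_{\{S_1-X_1<\ep\}}$ yields
$$E_0[F(X_\bullet);S_1-X_1<\ep]=E_0\big[F(X_{1-\bullet}-X_1);S_1<\ep\big],$$
and likewise $P_0(S_1-X_1<\ep)=P_0(S_1<\ep)$. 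Next I would apply the reflection symmetry $X_\bullet\mapsto-X_\bullet$, under which $X_{1-\bullet}-X_1$ becomes $X_1-X_{1-\bullet}$ and $\{S_1<\ep\}$ becomes $\{I_1>-\ep\}$. Combining the two symmetries gives the clean identity
$$E_0\left[F(X_\bullet)\,\middle|\,S_1-X_1<\ep\right]=E_0\left[F(X_1-X_{1-\bullet})\,\middle|\,I_1>-\ep\right].$$

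The final step is to let $\ep\searrow 0$ on the right-hand side. The functional $G(X_\bullet):=F(X_1-X_{1-\bullet})$ is bounded by $\|F\|$ and continuous, being the composition of $F$ with the continuous path transformation $X_\bullet\mapsto(X_1-X_{1-s})_{0\le s\le 1}$. By the classical weak convergence of Brownian motion conditioned to stay above $-\ep$ to the Brownian meander, namely that the law of $X_\bullet$ under $P_0(\,\cdot\mid I_1>-\ep)$ converges weakly to the law of $\mk$ as $\ep\searrow 0$, I obtain
$$\lim_{\ep\searrow 0}E_0\left[G(X_\bullet)\,\middle|\,I_1>-\ep\right]=E[G(\mk_\bullet)]=E[F(\mk_1-\mk_{1-\bullet})].$$
This both shows that the limit defining $\ak$ exists and identifies it, which proves $(\ak_s:0\le s\le 1)\stackrel{\mathcal{L}}{=}(\mk_1-\mk_{1-s}:0\le s\le 1)$.

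The only delicate point is the meander convergence invoked in the last step; everything else is bookkeeping of symmetries already available in the paper. I would either cite the weak convergence theorem of Durrett--Iglehart--Miller or reconcile the $\{I_1>-\ep\}$-conditioning with whatever characterization of $\mk$ is recorded in \autoref{app:appendix} (for instance via the Imhof relation \eqref{eq:Imhof}). As a sanity check one can verify directly from the reflection principle that, under $P_0(\,\cdot\mid I_1>-\ep)$, the endpoint $X_1$ converges in law to the Rayleigh distribution with density $ye^{-y^2/2}$, matching the known law of $\mk_1$ and confirming that the correct limiting object is the meander rather than some reflected or shifted variant.
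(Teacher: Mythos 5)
Your proposal is correct and follows essentially the same route as the paper: both make the conditioning on $\{S_1-X_1<\ep\}$ precise, transfer it via the $\phi$-invariance and reflection symmetry of Wiener measure to conditioning on $\{I_1>-\ep\}$ (you apply $\phi$ before reflecting, the paper reflects first, which is an immaterial reordering), and then invoke the Durrett--Iglehart--Miller weak convergence to the meander --- precisely Theorem 2.1 of \cite{meander_limit}, the paper's citation. Your closing sanity check on the Rayleigh law of the endpoint is a nice extra but not needed.
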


\begin{proof}
The idea behind the proof is to use the invariance property of $\phi$ from \autoref{def:reversal} together with a known limit theorem for the meander, similarly to proving \eqref{eq:L_2}. Let $F:\mathcal{C}\big([0,1];\R\big)\to\R$ be bounded and continuous. Then we have
\begin{equation*}
\begin{split}
E[F(\ak_\bullet)]&=\lim_{\ep\searrow 0}E_0[F(X_\bullet)|S_1-X_1<\ep]=\lim_{\ep\searrow 0}E_0[F(-X_\bullet)|X_1-I_1<\ep]\\
&=\lim_{\ep\searrow 0}E_0[F_\phi(-X_\bullet)|I_1>-\ep]=E[F_\phi(-\mk_\bullet)]\\
&=E[F(\mk_1-\mk_{1-\bullet})]
\end{split}
\end{equation*}
where weak convergence to Brownian meander in the last limit follows from Theorem 2.1 in \cite{meander_limit}.
\end{proof}

Recall L\'{e}vy's equivalence 
\begin{equation}\label{eq:Levy}
\Big(\big(S_t-X_t,S_t\big):t\geq 0\Big)\stackrel{\mathcal{L}}{=}\Big(\big(|X_t|,L_t^0(X)\big):t\geq 0\Big)
\end{equation}
where $L_t^0(X)$ denotes the local time of $X$ at the level $0$ up to time $t$. This equality in law holds under $P_0$, see Item B in Chapter 1 of \cite{Penalising}. Since conditioning $(X_s:0\leq s\leq 1)$ on the event $\{|X_1|<\ep\}$ and letting $\ep\searrow 0$ results in a standard Brownian bridge, we can apply the same argument of \autoref{prop:ascent} to both sides of \eqref{eq:Levy} and get the following result which can also be seen to follow from a combination of \autoref{prop:ascent} and Th\'{e}or\`{e}me 8 of \cite{meandre}.

\begin{prop}\label{prop:ascent_bridge}
Let $\left(L_s^0(\bk):0\leq s\leq 1\right)$ denote the local time process at the level $0$ of a Brownian bridge of length $1$ from $0$ to $0$. Then we have the equality in law
$$\left(\sup_{0\leq u\leq s}\ak_u :0\leq s\leq 1\right)\stackrel{\mathcal{L}}{=}\left(L_s^0(\bk):0\leq s\leq 1\right).$$
\end{prop}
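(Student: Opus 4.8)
The plan is to run the conditioning-limit argument of \autoref{prop:ascent} in parallel on the two coordinates appearing in L\'evy's equivalence \eqref{eq:Levy}. Since the law of a process in $\mathcal{C}\big([0,1];\R\big)$ is determined by the expectations $E[G(\cdot)]$ over bounded continuous $G:\mathcal{C}\big([0,1];\R\big)\to\R$, it suffices to show
$$E\left[G\left(\sup_{0\le u\le\bullet}\ak_u\right)\right]=E\big[G(L_\bullet^0(\bk))\big]$$
for every such $G$. The one feature that makes the argument go through is that the running-supremum map $\Phi:x_\bullet\mapsto\big(\sup_{0\le u\le s}x_u:0\le s\le1\big)$ is $1$-Lipschitz for the uniform norm, so $G\circ\Phi$ is again bounded continuous. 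Note also that $\Phi(X)=S_\bullet$, the running maximum of the canonical process.

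First I would rewrite the left-hand side. By the definition of the ascent recorded in the proof of \autoref{prop:ascent}, namely $E[F(\ak_\bullet)]=\lim_{\ep\searrow0}E_0[F(X_\bullet)\mid S_1-X_1<\ep]$ for bounded continuous $F$, applied to $F=G\circ\Phi$, one gets
$$E\left[G\left(\sup_{0\le u\le\bullet}\ak_u\right)\right]=\lim_{\ep\searrow0}E_0\big[G(S_\bullet)\,\big|\,S_1-X_1<\ep\big].$$
Next I would transfer this through \eqref{eq:Levy}. Reading off the second-coordinate process together with the time-$1$ value of the first coordinate on each side of \eqref{eq:Levy} gives the joint identity in law $\big((S_t)_{t\le1},S_1-X_1\big)\stackrel{\mathcal{L}}{=}\big((L_t^0(X))_{t\le1},|X_1|\big)$ under $P_0$. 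Since $G(S_\bullet)$ and $\{S_1-X_1<\ep\}$ are functionals of the left-hand data while $G(L_\bullet^0(X))$ and $\{|X_1|<\ep\}$ are the corresponding right-hand functionals (and the two conditioning events have equal probability), this yields
$$E_0\big[G(S_\bullet)\,\big|\,S_1-X_1<\ep\big]=E_0\big[G(L_\bullet^0(X))\,\big|\,|X_1|<\ep\big]$$
for every $\ep>0$.

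It then remains to identify the $\ep\searrow0$ limit of the right-hand side as $E\big[G(L_\bullet^0(\bk))\big]$, using that $X$ conditioned on $\{|X_1|<\ep\}$ converges weakly to the standard bridge $\bk$. This identification is the main obstacle, because $x_\bullet\mapsto L_\bullet^0(x)$ is \emph{not} continuous for the uniform norm, so the continuous-mapping reasoning available for $\Phi$ does not apply verbatim. I would resolve it by approximating the local time with the occupation functionals $\frac{1}{2\eta}\int_0^\bullet 1_{\{|X_s|<\eta\}}\dd{s}$: for fixed $\eta>0$ this is a bounded path functional that is continuous off a set carrying no mass under $\bk$ or the conditioned measures, so the conditioning limit passes through, and one then exchanges the $\eta\searrow0$ and $\ep\searrow0$ limits via the convergence of the occupation functionals to $L_\bullet^0$ in probability (uniformly in the time variable). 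An alternative is to use the absolute continuity of the bridge with respect to $P_0$ on $[0,1-\eta]$, where the local time is a measurable function of the restricted path, and then let $\eta\searrow0$ to control the remaining mass on $[1-\eta,1]$. Chaining the three displays then gives the desired equality of expectations, hence the equality in law; as remarked in the text, the same conclusion can alternatively be obtained by combining \autoref{prop:ascent} with Th\'eor\`eme 8 of \cite{meandre}.
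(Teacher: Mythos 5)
Your proposal is correct and takes essentially the same route as the paper: L\'evy's equivalence \eqref{eq:Levy} combined with the $\ep$-conditioning argument of \autoref{prop:ascent} applied to both coordinates. In fact you are somewhat more careful than the paper's one-paragraph argument, since you explicitly identify and patch the discontinuity of the local-time functional, a point the paper passes over (offering Th\'eor\`eme 8 of \cite{meandre} as an alternative justification).
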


We can also construct the Brownian ascent from a Brownian path by scaling the pre-maximum part of the path so that it has duration $1$. 

\begin{prop}\label{prop:Den_ascent}
Let $\Te$ denote the almost surely unique time at which the standard Brownian motion $W$ attains its maximum over the time interval $[0,1]$. Then we have the equality in law
$$(\ak_s:0\leq s\leq 1)\stackrel{\mathcal{L}}{=}\left(\frac{W_{s\Te}}{\rt{\Te}}:0\leq s\leq 1\right).$$
\end{prop}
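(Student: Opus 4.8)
Proposition \ref{prop:Den_ascent} asserts that the Brownian ascent can be obtained from a free Brownian path by rescaling its pre-maximum fragment to have unit duration.

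The plan is to reduce the claim to \autoref{prop:ascent} by recognizing the rescaled pre-maximum fragment of $W$, read backward from its maximum, as a Brownian meander. The essential ingredient I would invoke is Denisov's path decomposition of Brownian motion at its maximum: if $\Te$ is the almost surely unique argmax of $W$ on $[0,1]$, then the rescaled backward pre-maximum fragment
$$\mk_u:=\frac{W_\Te-W_{(1-u)\Te}}{\rt{\Te}},\qquad 0\leq u\leq 1,$$
is a standard Brownian meander. (Denisov's result also gives that $\mk$ is independent of $\Te$ and of the analogous post-maximum meander, but I would not need the independence here.) As a sanity check one verifies $\mk_0=0$ and $\mk_u\geq 0$, since $W_\Te=\sup_{s}W_s$, so that $\mk$ indeed has the qualitative shape of a meander.

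Granting this, the remainder is a single change of variable performed pathwise on the same realization of $W$. Setting $Z_s:=W_{s\Te}/\rt{\Te}$, one reads off $\mk_1=W_\Te/\rt{\Te}$ and $\mk_{1-s}=(W_\Te-W_{s\Te})/\rt{\Te}$, whence
$$\mk_1-\mk_{1-s}=\frac{W_{s\Te}}{\rt{\Te}}=Z_s,\qquad 0\leq s\leq 1.$$
Thus $(Z_s:0\leq s\leq 1)$ coincides pathwise with $(\mk_1-\mk_{1-s}:0\leq s\leq 1)$ for the genuine meander $\mk$, and since \autoref{prop:ascent} identifies the law of $\mk_1-\mk_{1-\bullet}$ as that of the Brownian ascent, the proposition follows. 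The boundary values $Z_0=0$ and $Z_1=W_\Te/\rt{\Te}=\sup_s Z_s$ confirm that $Z$ ends exactly at its running maximum, matching the informal description of $\ak$ as a Brownian path conditioned on $\{X_1=S_1\}$.

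The main obstacle is entirely bookkeeping rather than conceptual: one must pin down the precise form of Denisov's statement—which of the two fragments is time-reversed, in which spatial direction the resulting meander points, and the correct $\rt{\Te}$ normalization—so that the substitution $u\mapsto 1-s$ produces the sign pattern and the time reversal appearing in \autoref{prop:ascent}. I would carry out the verification of the displayed identity in full, as it is exactly the place where a misplaced reversal or normalization would go undetected.
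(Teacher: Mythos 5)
Your proof is correct and follows essentially the same route as the paper: both invoke Denisov's decomposition (\autoref{thm:Denisov}) to identify the rescaled, reversed pre-maximum fragment as a Brownian meander and then conclude via \autoref{prop:ascent}. The only cosmetic difference is that you verify the identity $\mk_1-\mk_{1-s}=W_{s\Te}/\rt{\Te}$ pathwise on a single realization of $W$ (using $W_0=0$), whereas the paper reaches the same identity by reflecting the Denisov meander and applying the transformation $\phi$ to the equality in law.
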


\begin{proof}
From Denisov's path decomposition \autoref{thm:Denisov} we have
$$(\mk_s:0\leq s\leq 1)\stackrel{\mathcal{L}}{=}\left(\frac{W_\Te-W_{\Te-s\Te}}{\rt{\Te}}:0\leq s\leq 1\right).$$
Reflecting both processes about $0$ gives 
$$(-\mk_s:0\leq s\leq 1)\stackrel{\mathcal{L}}{=}\left(\frac{W_{\Te-s\Te}-W_\Te}{\rt{\Te}}:0\leq s\leq 1\right).$$
Applying $\phi$ to both processes results in 
$$(\mk_1-\mk_{1-s}:0\leq s\leq 1)\stackrel{\mathcal{L}}{=}\left(\frac{W_{s\Te}}{\rt{\Te}}:0\leq s\leq 1\right).$$
Now the desired result follows from \autoref{prop:ascent}.
\end{proof}

There is an absolute continuity relation between the path measures of the Brownian ascent and Brownian motion run up to the first hitting time of $1$ and then rescaled to have duration $1$. This random scaling construction is reminiscent of Pitman and Yor's \emph{agreement formula} for Bessel bridges, see \cite{max_decomp}.
\begin{prop}\label{prop:pseudo_ascent}
Let $\tau_1$ be the first hitting time of $1$ by $X$. For any measurable $F:\mathcal{C}\big([0,1];\R\big)\to\R_+$ we have
$$E[F(\ak_\bullet)]=\rt{\frac{\pi}{2}}E_0\left[F\left(\frac{X_{\bullet\tau_1}}{\rt{\tau_1}}\right)\frac{1}{\rt{\tau_1}}\right].$$
\end{prop}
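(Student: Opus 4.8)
The plan is to disintegrate both sides according to the terminal value of the path and to show that, conditionally on this value, the two sides agree, reducing the statement to a one-dimensional density computation. Write $m$ for the common terminal value. On the left, the ascent ends at its maximum, so conditioning $\ak$ on $\{\ak_1=m\}$ should produce Brownian motion of duration $1$ run from $0$ to $m$ and conditioned to stay below $m$, that is, the Brownian first passage bridge from $0$ to $m$ of length $1$; on the right, the rescaled process $X_{\bullet\tau_1}/\rt{\tau_1}$ ends at $X_{\tau_1}/\rt{\tau_1}=1/\rt{\tau_1}$, which is likewise its maximum. The key structural claim is that these two endpoint-conditional laws coincide.

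First I would treat the right-hand side. Disintegrating over $\tau_1$ gives
$$\rt{\tfrac{\pi}{2}}\,E_0\!\left[F\!\left(\tfrac{X_{\bullet\tau_1}}{\rt{\tau_1}}\right)\tfrac{1}{\rt{\tau_1}}\right]=\rt{\tfrac{\pi}{2}}\int_0^\infty t^{-1/2}\,E_0\!\left[F\!\left(\tfrac{X_{\bullet t}}{\rt{t}}\right)\,\middle|\,\tau_1=t\right]P_0(\tau_1\in\dd{t}).$$
By Brownian scaling, conditionally on $\{\tau_1=t\}$ the path $(X_{st}/\rt{t}:0\le s\le 1)$ is a first passage bridge from $0$ to $1/\rt{t}$ of length $1$, so with $m=1/\rt{t}$ the inner expectation depends on $t$ only through $m$. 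Using the first passage density $P_0(\tau_1\in\dd{t})=(2\pi t^3)^{-1/2}\exp(-1/(2t))\dd{t}$ and the substitution $t=m^{-2}$, a direct computation shows that the total weight $\rt{\pi/2}\,t^{-1/2}P_0(\tau_1\in\dd{t})$ transforms exactly into the Rayleigh density $m\exp(-m^2/2)\dd{m}$ on $(0,\infty)$. Thus the right-hand side equals the mixture over $m$, distributed as this Rayleigh law, of first passage bridges from $0$ to $m$ of length $1$.

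Next I would treat the left-hand side. By \autoref{prop:ascent} one has $\ak_1\stackrel{\mathcal{L}}{=}\mk_1$, and the terminal value of the Brownian meander is Rayleigh distributed with density $m\exp(-m^2/2)$, so the two terminal laws already agree. It then remains to confirm that conditioning the ascent on $\{\ak_1=m\}$ yields the same first passage bridge from $0$ to $m$ that appears on the right. I would obtain this from the meander representation of \autoref{prop:ascent}: conditioning the meander on its endpoint gives a Bessel$(3)$ bridge, and the reversal–reflection $\ak_s=\mk_1-\mk_{1-s}$ carries this Bessel$(3)$ bridge into the first passage bridge from $0$ to $m$, an identification which is precisely the content of the first passage bridge construction of \cite{fp_bridge}. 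Combining the matched endpoint-conditional laws with the matched terminal densities yields the identity for bounded continuous $F$, and hence for all nonnegative measurable $F$ by a monotone class argument.

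The main obstacle I anticipate is the rigorous justification of the disintegrations and of the endpoint-conditional identification: one must verify that the conditional law of the ascent given $\{\ak_1=m\}$ is genuinely the first passage bridge (not merely that the two sides share a terminal value) and that the first passage bridge depends weakly continuously on its endpoint $m$, so that the two mixture representations can be equated. I would control the continuity by invoking the weak continuity of bridge measures in their endpoints from \cite{bridges1,bridges3} already used in \autoref{sec:R_1}, and settle the conditional identification by combining \autoref{prop:ascent} with the first passage bridge decomposition of \cite{fp_bridge}.
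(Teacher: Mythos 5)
Your proposal is correct, but it follows a genuinely different route from the paper. The paper's proof is a chain of three exact absolute continuity identities with no disintegration at all: it first applies \autoref{prop:ascent} together with the Imhof relation \eqref{eq:Imhof} to write $E[F(\ak_\bullet)]=\rt{\pi/2}\,E_0[F(R_1-R_{1-\bullet})/R_1]$ for a Bessel(3) process $R$, then writes $1/R_1=R_1\cdot(1/R_1^2)$ and invokes \autoref{thm:last_hit} to trade the fixed-time window $[0,1]$ for the window up to the last hitting time $\ga_1$ (using $R_{\ga_1}=1$), and finally applies Williams' time reversal (\autoref{thm:Williams_rev}) to convert the Bessel path run to $\ga_1$ into the Brownian path run to $\tau_1$. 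Because every step is an identity valid for all non-negative measurable functionals, no conditioning, endpoint continuity, or monotone class argument is needed. Your endpoint disintegration is sound: the Rayleigh bookkeeping checks out (with $t=m^{-2}$ the weight $\rt{\pi/2}\,t^{-1/2}(2\pi t^3)^{-1/2}e^{-1/(2t)}\dd{t}$ does become $m\exp(-m^2/2)\dd{m}$, matching $\ak_1\stackrel{\mathcal{L}}{=}\mk_1$), and the conditional identification is exactly the content of \cite{fp_bridge}: the meander given $\mk_1=m$ is a Bessel(3) bridge from $0$ to $m$, and the map $s\mapsto \mk_1-\mk_{1-s}$ carries it to the first passage bridge from $0$ to $m$ of length $1$, which is also the law of $X_{\bullet t}/\rt{t}$ given $\tau_1=t$ with $m=1/\rt{t}$. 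What your approach buys is structural insight the paper's proof leaves implicit — it identifies the conditional law of the ascent given its endpoint and exhibits both sides as the same Rayleigh mixture of first passage bridges, nicely explaining the paper's remark that this process first arose in the first passage bridge setting; what it costs is the technical overhead you correctly flag (regular conditional distributions and weak continuity of the bridge laws in $m$, plus the extension from bounded continuous to measurable $F$), all of which the paper's identity-based argument sidesteps.
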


\begin{proof}
\autoref{prop:ascent} and the Imhof relation \eqref{eq:Imhof} imply that
\begin{equation}\label{eq:ascent_Imhof}
E[F(\ak_\bullet)]=\rt{\frac{\pi}{2}}E_0\left[F\left(R_1-R_{1-\bullet}\right)\frac{1}{R_1}\right]
\end{equation}
where $R$ is a Bessel(3) process. Let $\ga_1$ denote the last hitting time of $1$ by $R$. Since $R_\bullet\mapsto F(R_1-R_{1-\bullet})R_1$ is also a non-negative measurable path functional, we can use \autoref{thm:last_hit} to rewrite \eqref{eq:ascent_Imhof} as
\begin{align}
E[F(\ak_\bullet)]=\rt{\frac{\pi}{2}}E_0\left[F\left(R_1-R_{1-\bullet}\right)R_1\frac{1}{R_1^2}\right]&=\rt{\frac{\pi}{2}}E_0\left[F\left(\frac{R_{\ga_1}-R_{(1-\bullet)\ga_1}}{\rt{\ga_1}}\right)\frac{R_{\ga_1}}{\rt{\ga_1}}\right]\nonumber\\
&=\rt{\frac{\pi}{2}}E_0\left[F\left(\frac{1-R_{(1-\bullet)\ga_1}}{\rt{\ga_1}}\right)\frac{1}{\rt{\ga_1}}\right].\label{eq:ascent_rev}
\end{align}
Now Williams' time reversal \autoref{thm:Williams_rev} can be used to conclude that \eqref{eq:ascent_rev} is equal to
$$\rt{\frac{\pi}{2}}E_0\left[F\left(\frac{X_{\bullet\tau_1}}{\rt{\tau_1}}\right)\frac{1}{\rt{\tau_1}}\right].$$
\end{proof}

The process
\begin{equation}\label{eq:co_ascent}
\left(\frac{X_{s\tau_1}}{\rt{\tau_1}}:0\leq s\leq 1\right)
\end{equation}
under $P_0$ which appears in \autoref{prop:pseudo_ascent} has recently been studied by Elie, Rosenbaum, and Yor in \cite{pseudo_ejp, triplet, pseudo_msj, pseudo_esaim}. Among other results, they derive the density of the random variable $\al$ defined by 
$$\al=\frac{X_{U\tau_1}}{\rt{\tau_1}}$$
where $U$ is a Uniform$[0,1]$ random variable independent of $X$. The non-obvious fact that $E_0[\al]=0$ leads to an interesting corollary of \autoref{prop:pseudo_ascent}.

\begin{cor}
Let $U$ be a Uniform$[0,1]$ random variable independent of $\ak$. Then
$$E\left[\int_0^1\frac{\ak_s}{\ak_1}\dd{s}\right]=E\left[\frac{\ak_U}{\ak_1}\right]=0.$$
\end{cor}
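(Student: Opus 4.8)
The plan is to reduce the entire statement to the single nontrivial input $E_0[\al]=0$ recorded above, with \autoref{prop:pseudo_ascent} serving as the bridge. The first equality is a routine application of Fubini's theorem: since $U$ is Uniform$[0,1]$ and independent of $\ak$, conditioning on the path $\ak$ gives $E\left[\ak_U/\ak_1\,\middle|\,\ak\right]=\int_0^1(\ak_s/\ak_1)\dd{s}$, and taking expectations yields the first equality, provided the integrability verified below holds. It therefore suffices to prove $E\left[\ak_U/\ak_1\right]=0$.

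To this end I would first fix a deterministic time $u\in[0,1]$ and apply \autoref{prop:pseudo_ascent} to the functional $F(w)=w_u/w_1$. The crucial simplification is that $X_{\tau_1}=1$ almost surely, so in the random-scaling representation the endpoint is $X_{\tau_1}/\rt{\tau_1}=1/\rt{\tau_1}$ and the ratio collapses to
$$\frac{X_{u\tau_1}/\rt{\tau_1}}{X_{\tau_1}/\rt{\tau_1}}=X_{u\tau_1}.$$
Consequently \autoref{prop:pseudo_ascent} gives $E\left[\ak_u/\ak_1\right]=\rt{\frac{\pi}{2}}\,E_0\left[X_{u\tau_1}/\rt{\tau_1}\right]$. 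Integrating over $u\in[0,1]$ and recognizing the integral as the average over the independent Uniform time $U$ yields
$$E\left[\frac{\ak_U}{\ak_1}\right]=\rt{\frac{\pi}{2}}\,E_0\left[\frac{X_{U\tau_1}}{\rt{\tau_1}}\right]=\rt{\frac{\pi}{2}}\,E_0[\al],$$
which vanishes by the cited computation of the law of $\al$.

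The main obstacle is the signedness and integrability, since \autoref{prop:pseudo_ascent} is stated only for non-negative functionals while $w_u/w_1$ changes sign. I would handle both issues at once by decomposing $F=F^+-F^-$ into positive and negative parts, applying \autoref{prop:pseudo_ascent} to each separately, and then integrating over $u$ using Tonelli's theorem. This converts the two right-hand sides into $\rt{\frac{\pi}{2}}\,E_0[\al^+]$ and $\rt{\frac{\pi}{2}}\,E_0[\al^-]$, which are finite because the explicit density of $\al$ obtained in \cite{pseudo_msj} shows that $\al$ is integrable. Subtracting the two identities then legitimizes every interchange, delivers the value $\rt{\frac{\pi}{2}}\,E_0[\al]=0$, and simultaneously furnishes the integrability required for the Fubini step in the first equality.
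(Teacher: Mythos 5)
Your proposal is correct and is exactly the argument the paper intends: the corollary is stated as an immediate consequence of \autoref{prop:pseudo_ascent}, using that $X_{\tau_1}=1$ collapses the ratio so that the identity reduces to $\rt{\frac{\pi}{2}}\,E_0[\al]=0$ from \cite{pseudo_msj}. Your added care with the positive/negative-part decomposition, Tonelli, and the integrability of $\al$ (guaranteed by its explicit density, or by the bound $\al^+\leq\tau_1^{-1/2}$) merely makes explicit the routine justifications the paper leaves implicit.
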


\subsection{Brownian co-ascent}

In this section we show how the process \eqref{eq:co_ascent} is related to the Brownian co-meander. The reader unfamiliar with the co-meander can refer to \autoref{app:appendix} for some basic information. The following proposition suggests that a suitable name for the process \eqref{eq:co_ascent} is the \emph{Brownian co-ascent} since it is constructed from the co-meander in the same manner that the ascent is constructed from the meander, viz \autoref{prop:ascent}.

\begin{prop}
If $X$ has distribution $P_0$ then
$$\left(\frac{X_{s\tau_1}}{\rt{\tau_1}}:0\leq s\leq 1\right)
\stackrel{\mathcal{L}}{=}(\tilde{\mk}_1-\tilde{\mk}_{1-s}:0\leq s\leq 1).$$
\end{prop}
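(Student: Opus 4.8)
The plan is to mirror the proof of \autoref{prop:pseudo_ascent} almost verbatim, replacing the meander Imhof relation \eqref{eq:Imhof} by its co-meander counterpart and then reusing the last-passage rescaling and the time reversal exactly as they occur there. To establish the claimed equality in law it suffices to show that $E[F(\tilde{\mk}_1-\tilde{\mk}_{1-\bullet})]=E_0[F(X_{\bullet\tau_1}/\rt{\tau_1})]$ for every non-negative measurable $F:\mathcal{C}\big([0,1];\R\big)\to\R_+$, so I would compute the left-hand side and transport it onto the right.

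First I would invoke the co-meander analogue of the Imhof relation \eqref{eq:Imhof} recorded in \autoref{app:appendix}, which I expect to express the co-meander as the Bessel(3) process reweighted by $R_1^{-2}$, namely $E[F(\tilde{\mk}_\bullet)]=E_0\big[F(R_\bullet)R_1^{-2}\big]$ for a Bessel(3) process $R$. (As a sanity check this is a genuine probability measure, since $E_0[R_1^{-2}]=1$ by the density \eqref{eq:Bes0_dens}.) Applying it to the functional $X_\bullet\mapsto F(X_1-X_{1-\bullet})$ gives
$$E[F(\tilde{\mk}_1-\tilde{\mk}_{1-\bullet})]=E_0\left[F(R_1-R_{1-\bullet})\frac{1}{R_1^2}\right].$$
Next I would apply \autoref{thm:last_hit} exactly as in the proof of \autoref{prop:pseudo_ascent}, but now with the path functional $\Psi(R_\bullet):=F(R_1-R_{1-\bullet})$ in place of $F(R_1-R_{1-\bullet})R_1$, which rescales the Bessel(3) path at its last passage time $\ga_1$ of the level $1$ and absorbs the weight $R_1^{-2}$. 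Using $R_{\ga_1}=1$, this yields
$$E_0\left[\Psi(R_\bullet)\frac{1}{R_1^2}\right]=E_0\left[\Psi\left(\frac{R_{\bullet\ga_1}}{\rt{\ga_1}}\right)\right]=E_0\left[F\left(\frac{1-R_{(1-\bullet)\ga_1}}{\rt{\ga_1}}\right)\right].$$
This is precisely the expression \eqref{eq:ascent_rev} appearing in the proof of \autoref{prop:pseudo_ascent}, except that the leftover $\ga_1^{-1/2}$ weight is now absent, which is exactly the effect of the co-meander weight $R_1^{-2}$ being one power of $R_1$ smaller than the meander weight $R_1^{-1}$. Finally I would apply Williams' time reversal \autoref{thm:Williams_rev} to turn the reversed, rescaled Bessel(3) path into Brownian motion run up to $\tau_1$ and rescaled, arriving at $E_0[F(X_{\bullet\tau_1}/\rt{\tau_1})]$, the law of the process \eqref{eq:co_ascent}.

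The main obstacle — indeed essentially the only nontrivial point — is pinning down the correct co-meander/Bessel(3) absolute continuity relation, specifically the exact exponent $R_1^{-2}$ and the absence of any extra normalizing constant; this is what makes the two residual $\rt{\ga_1}$ (equivalently $\rt{\tau_1}$) factors cancel and produces an \emph{unweighted} expectation, in contrast to the $\rt{\tau_1}^{-1}$ weight obtained for the ascent in \autoref{prop:pseudo_ascent}. Everything downstream is a direct replay of the last two displays of that proof, so no new estimates are required. An alternative route would imitate \autoref{prop:ascent} directly, realizing the co-meander as a conditioning limit and pushing it through the reversal $\phi$ of \autoref{def:reversal} together with a reflection; this seems less economical than reusing \autoref{thm:last_hit} and \autoref{thm:Williams_rev}, but it would have the advantage of not relying on the precise form of the co-meander Imhof relation.
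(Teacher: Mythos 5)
Your proof is correct, but it takes a different route from the paper, which disposes of the statement in two lines: it quotes Theorem 2.1 of \cite{pseudo_msj} for the identity $E_0\big[F\big(X_{\bullet\tau_1}/\rt{\tau_1}\big)\big]=E_0\big[F(R_1-R_{1-\bullet})R_1^{-2}\big]$ and then matches the right-hand side with the reversed co-meander via \eqref{eq:co_Imhof}. You use the same co-Imhof step (and you have its exact form right, including the absence of a normalizing constant --- your sanity check $E_0[R_1^{-2}]=1$ via \eqref{eq:Bes0_dens} is valid), but instead of citing the external theorem you rederive it internally: applying \autoref{thm:last_hit} to the functional $\Psi(R_\bullet)=F(R_1-R_{1-\bullet})$ and using $R_{\ga_1}=1$ gives $E_0\big[F(R_1-R_{1-\bullet})R_1^{-2}\big]=E_0\big[F\big((1-R_{(1-\bullet)\ga_1})/\rt{\ga_1}\big)\big]$, and Williams' time reversal \autoref{thm:Williams_rev} converts this into $E_0\big[F\big(X_{\bullet\tau_1}/\rt{\tau_1}\big)\big]$, exactly as in the last two displays of the proof of \autoref{prop:pseudo_ascent}. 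Your observation about the bookkeeping is also the right one: the weight $R_1^{-2}$ is precisely what \autoref{thm:last_hit} absorbs in full, so no residual $\ga_1^{-1/2}$ factor survives, in contrast to the ascent case where the Imhof weight $R_1^{-1}$ leaves one power behind. (Note that \autoref{thm:last_hit} is stated for bounded measurable functionals, but the appendix explicitly extends all such relations to non-negative measurable $F$, so plugging in $\Psi$ for non-negative measurable $F$ is legitimate.) What each approach buys: the paper's citation is shorter, while your argument is self-contained within the paper's appendix toolkit and, read in reverse, actually furnishes an independent proof of the cited identity from \cite{pseudo_msj}.
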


\begin{proof}
Let $F:\mathcal{C}\big([0,1];\R\big)\to\R$ be bounded and continuous. Then by Theorem 2.1. in \cite{pseudo_msj} we have
$$E_0\left[F\left(\frac{X_{\bullet\tau_1}}{\rt{\tau_1}}\right)\right]=E_0\left[F(R_1-R_{1-\bullet})\frac{1}{R_1^2}\right]$$
and by \eqref{eq:co_Imhof} we have
$$E_0\left[F(R_1-R_{1-\bullet})\frac{1}{R_1^2}\right]=E[F\left(\tilde{\mk}_1-\tilde{\mk}_{1-\bullet}\right)].$$
The proposition follows from combining these two identities.
\end{proof}

From now on we refer to the process \eqref{eq:co_ascent} as the Brownian co-ascent and denote it by $\left(\tilde{\ak}_s:0\leq s\leq 1\right)$. This allows us to state as an immediate corollary of \autoref{prop:pseudo_ascent} the following absolute continuity relation between the ascent and co-ascent which can also be seen as a counterpart of \eqref{eq:meander_co}.

\begin{cor}
For any measurable $F:\mathcal{C}\big([0,1];\R\big)\to\R_+$ we have
$$E[F(\ak_\bullet)]=\rt{\frac{\pi}{2}}E\left[F(\tilde{\ak}_\bullet)\tilde{\ak}_1\right].$$
\end{cor}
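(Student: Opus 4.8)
The plan is to observe that this corollary is a direct consequence of \autoref{prop:pseudo_ascent} once one identifies the factor $1/\rt{\tau_1}$ appearing there with the endpoint $\tilde{\ak}_1$ of the co-ascent. Recall that by definition the Brownian co-ascent is the process $\big(\tilde{\ak}_s:0\leq s\leq 1\big)=\big(X_{s\tau_1}/\rt{\tau_1}:0\leq s\leq 1\big)$ under $P_0$, where $\tau_1$ is the first hitting time of $1$ by $X$.

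The key step is to evaluate the co-ascent at its right endpoint $s=1$. Since $\tau_1$ is the first hitting time of the level $1$, we have $X_{\tau_1}=1$ by continuity of the paths. Therefore
\begin{equation*}
\tilde{\ak}_1=\frac{X_{\tau_1}}{\rt{\tau_1}}=\frac{1}{\rt{\tau_1}}.
\end{equation*}
This identity is what allows the random weight $1/\rt{\tau_1}$ in \autoref{prop:pseudo_ascent} to be reinterpreted as a functional of the co-ascent itself, rather than as a separate factor.

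With this observation in hand, I would simply substitute into the conclusion of \autoref{prop:pseudo_ascent}. For any measurable $F:\mathcal{C}\big([0,1];\R\big)\to\R_+$, that proposition gives
\begin{equation*}
E[F(\ak_\bullet)]=\rt{\frac{\pi}{2}}E_0\left[F\left(\frac{X_{\bullet\tau_1}}{\rt{\tau_1}}\right)\frac{1}{\rt{\tau_1}}\right]=\rt{\frac{\pi}{2}}E\left[F(\tilde{\ak}_\bullet)\tilde{\ak}_1\right],
\end{equation*}
where the second equality rewrites the scaled process $X_{\bullet\tau_1}/\rt{\tau_1}$ as $\tilde{\ak}_\bullet$ and replaces $1/\rt{\tau_1}$ by $\tilde{\ak}_1$. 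This yields exactly the claimed absolute continuity relation.

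There is no real obstacle here; the content is entirely in \autoref{prop:pseudo_ascent}, and the corollary is immediate once the endpoint identity $\tilde{\ak}_1=1/\rt{\tau_1}$ is recorded. The only point worth stating carefully is that $F$ is assumed merely measurable and non-negative, so that the passage from \autoref{prop:pseudo_ascent} requires no additional integrability or continuity hypotheses beyond those already present there.
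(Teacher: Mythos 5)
Your proof is correct and is exactly the paper's argument: the paper states this as an immediate corollary of \autoref{prop:pseudo_ascent}, with the endpoint identity $\tilde{\ak}_1=X_{\tau_1}/\rt{\tau_1}=1/\rt{\tau_1}$ (since $X_{\tau_1}=1$ by path continuity) doing all the work, just as you recorded. Your remark that no extra hypotheses on $F$ are needed, because $F(\tilde{\ak}_\bullet)\tilde{\ak}_1$ is itself a non-negative measurable functional of the path, is also consistent with the paper's setup.
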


Next we give an analogue of \autoref{prop:ascent_bridge} for the Brownian co-ascent. Let $(\ell_t:t\geq 0)$ denote the inverse local time of $X$ at the level $0$, that is, $\ell_t=\inf\{s:L_s^0(X)>t\}$. The \emph{pseudo-Brownian bridge} $\tilde{\bk}$ was introduced in \cite{pseudo_bridge} and has representation
$$\left(\tilde{\bk}_s:0\leq s\leq 1\right)\stackrel{\mathcal{L}}{=}\left(\frac{X_{s\ell_1}}{\rt{\ell_1}}:0\leq s\leq 1\right)$$
where the right-hand side is under $P_0$, see also \cite{triplet}.

\begin{prop}
Let $\left(L_s^0\big(\tilde{\bk}\big):0\leq s\leq 1\right)$ denote the local time process at the level $0$ of a pseudo-Brownian bridge. Then we have the equality in law
$$\left(\sup_{0\leq u\leq s}\tilde{\ak}_u :0\leq s\leq 1\right)\stackrel{\mathcal{L}}{=}\left(L_s^0\big(\tilde{\bk}\big):0\leq s\leq 1\right).$$
\end{prop}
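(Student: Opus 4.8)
The plan is to mirror the proof of \autoref{prop:ascent_bridge}, but with the endpoint-conditioning there replaced by a random rescaling at a first passage time, and with L\'{e}vy's equivalence \eqref{eq:Levy} again serving as the bridge between a running maximum and a local time. In \autoref{prop:ascent_bridge} the running maximum of the ascent was matched to the local time of the Brownian bridge; here the running maximum of the \emph{co}-ascent should be matched to the local time of the \emph{pseudo}-Brownian bridge, and the natural mechanism is that the co-ascent and the pseudo-bridge are obtained by rescaling $X$ at $\tau_1$ and at $\ell_1$, respectively.

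First I would record the elementary observation that, since $\tilde{\ak}_s = X_{s\tau_1}/\rt{\tau_1}$ under $P_0$, the running maximum of the co-ascent is just a rescaled running maximum of $X$: for $0\le s\le 1$,
$$\sup_{0\le u\le s}\tilde{\ak}_u = \frac{1}{\rt{\tau_1}}\sup_{0\le u\le s}X_{u\tau_1}=\frac{S_{s\tau_1}}{\rt{\tau_1}}.$$
Thus the target identity reduces to comparing $\big(S_{s\tau_1}/\rt{\tau_1}\big)$ with $\big(L_s^0(\tilde{\bk})\big)$. Next I would invoke \eqref{eq:Levy} in its joint, process-level form: under $P_0$, the pair $\big((S_t-X_t,S_t):t\ge 0\big)$ has the same law as $\big((|X_t|,L_t^0(X)):t\ge 0\big)$. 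The crucial point is that the nondecreasing second coordinates $S$ and $L^0(X)$ are matched, so the first passage time $\tau_1=\inf\{t:S_t=1\}$ corresponds to the inverse local time $\ell_1=\inf\{t:L_t^0(X)=1\}$. Applying to both sides the same measurable functional, namely ``restrict the second coordinate to $[0,\sigma]$, where $\sigma$ is its first passage time of level $1$, and rescale it to duration $1$ with spatial factor $1/\rt{\sigma}$,'' yields
$$\left(\frac{S_{s\tau_1}}{\rt{\tau_1}}:0\le s\le 1\right)\stackrel{\mathcal{L}}{=}\left(\frac{L_{s\ell_1}^0(X)}{\rt{\ell_1}}:0\le s\le 1\right).$$

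Finally I would identify the right-hand side with the local time of the pseudo-Brownian bridge. Using the representation $\tilde{\bk}_s\stackrel{\mathcal{L}}{=}X_{s\ell_1}/\rt{\ell_1}$ together with the Brownian scaling property of local time, $L_s^0\big(X_{\bullet c}/\rt{c}\big)=L_{sc}^0(X)/\rt{c}$ for $c>0$, applied with $c=\ell_1$, gives $L_s^0(\tilde{\bk})=L_{s\ell_1}^0(X)/\rt{\ell_1}$, which is exactly the right-hand side above. Chaining the three displays proves the proposition.

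The main obstacle I anticipate is the careful justification of the middle step: verifying that $\tau_1$ and $\ell_1$ truly correspond under \eqref{eq:Levy}, and that the rescaling is a single measurable functional of the second coordinate applied identically to both sides, so that equality in law is preserved. This requires checking that $\sigma$ is a measurable function of the nondecreasing path, that $\tau_1<\infty$ and $\ell_1<\infty$ almost surely so the rescaled paths are well defined, and that the functional is insensitive to the $P_0$-null modifications inherent in the definition of $L^0$. The local-time scaling identity is routine from the occupation-density definition, but its interaction with the random time $\ell_1$ should be handled by first establishing the deterministic relation for fixed $c$ and only then substituting $c=\ell_1$.
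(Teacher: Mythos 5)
Your proposal is correct and takes essentially the same route as the paper's proof: the paper likewise identifies $\tau_1$ a.s.\ with $T_1=\inf\{t:S_t=1\}$, applies L\'{e}vy's equivalence \eqref{eq:Levy} to match the rescaled running maximum at $T_1$ with the rescaled local time at $\ell_1$, and reads off the right-hand side as $L_\bullet^0\big(\tilde{\bk}\big)$ via the scaling of local time. The only nuance is that the paper's $\ell_1=\inf\{s:L_s^0(X)>1\}$ is the right-continuous inverse, which agrees a.s.\ with your $\inf\{t:L_t^0(X)=1\}$ because inverse local time has no fixed jump times --- precisely the null-set issue you already flagged.
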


\begin{proof}
Define $T_1=\inf\{t:S_t=1\}$. Notice that $T_1=\tau_1$ almost surely under $P_0$. Hence
\begin{equation}\label{eq:max_time}
\left(\sup_{0\leq u\leq s}\frac{X_{u\tau_1}}{\rt{\tau_1}}:0\leq s\leq 1\right)\stackrel{\mathcal{L}}{=}\left(\sup_{0\leq u\leq s}\frac{X_{u T_1}}{\rt{T_1}}:0\leq s\leq 1\right)
\end{equation}
under $P_0$. Additionally, L\'{e}vy's equivalence \eqref{eq:Levy} implies
\begin{equation}\label{eq:pseudo_bridge}
\left(\sup_{0\leq u\leq s}\frac{X_{u T_1}}{\rt{T_1}}:0\leq s\leq 1\right)\stackrel{\mathcal{L}}{=}\left(L_s^0\left(\frac{X_{\bullet \ell_1}}{\rt{\ell_1}}\right):0\leq s\leq 1\right)
\end{equation}
under $P_0$. The proposition follows from combining \eqref{eq:max_time} and \eqref{eq:pseudo_bridge}.
\end{proof}

\section{Concluding remarks}\label{sec:future}
Two natural directions for generalizing the main results of this paper are to change the weight process or the reference measure. Scaled penalization of Brownian motion with drift $h\in\R$ by the weight process ${\Ga_t=\exp\big(-\nu(S_t-I_t)\big)}$ is one such possibility. This \emph{range penalization} with $\nu>0$ has been investigated in \cite{Schmock} for $h=0$, in \cite{Povel} for $0<|h|<\nu$, and recently in \cite{Kolb} for $|h|=\nu$. While the first two papers identify the corresponding scaling limit, only partial results are known in the critical case $|h|=\nu$. A related model replaces the Brownian motion with drift by reflecting Brownian motion with drift and penalizes the supremum instead of the range. The asymmetry imposed by the reflecting barrier at $0$ now makes the sign of $h$ relevant. Work in preparation by the current author describes the scaling limit in the critical case for both of these models.

Another interesting question is to what extent can the absolute continuity relation \autoref{prop:pseudo_ascent} and the path constructions \autoref{prop:ascent} and \autoref{prop:Den_ascent} be generalized to processes other than Brownian motion? While all three of these can be nominally applied to many processes, it's not obvious if they yield a bona fide ascent, that is, the process conditioned to end at its maximum. Scale invariance is an underlying theme in all of these results so it makes sense to first consider self-similar processes such as strictly stable L\'{e}vy processes and Bessel processes. In this direction, existing work on stable meanders and the stable analogue of Denisov's decomposition found in Chapter VIII of \cite{Bertoin} would be a good starting point.

\vs\no\textbf{Acknowledgments:} The author would like to thank Iddo Ben-Ari for his helpful suggestions and encouragement and also Jim Pitman and Ju-Yi Yen for their tips on the history of the Brownian meander and co-meander as well as pointers to the literature.

\appendix

\section{Appendix}\label{app:appendix}

Refer to \autoref{sec:notation} for any unfamiliar notation.

\subsection{normalized Brownian excursion, meander and co-meander}

The normalized Brownian excursion, meander and co-meander can be constructed from the excursion of Brownian motion which straddles time $1$. In fact, this is usually how these processes are defined, see Chapter 7 in \cite{local_times}. Define ${g_1=\sup\{t<1:X_t=0\}}$ as the last zero before time $1$ and $d_1=\inf\{t>1:X_t=0\}$ as the first zero after time $1$. Then the normalized excursion, meander and co-meander have representation
$$(\ek_s:0\leq s\leq 1)\stackrel{\mathcal{L}}{=}\left(\frac{|X_{g_1+s(d_1-g_1)}|}{\rt{d_1-g_1}}:0\leq s\leq 1\right),$$
$$(\mk_s:0\leq s\leq 1)\stackrel{\mathcal{L}}{=}\left(\frac{|X_{g_1+s(1-g_1)}|}{\rt{1-g_1}}:0\leq s\leq 1\right)$$
and
$$\left(\tilde{\mk}_s:0\leq s\leq 1\right)\stackrel{\mathcal{L}}{=}\left(\frac{|X_{d_1+s(1-d_1)}|}{\rt{d_1-1}}:0\leq s\leq 1\right),$$
respectively, where the right-hand sides are under $P_0$.

The laws of the meander and co-meander are absolutely continuous with respect to each other and to the law of the Bessel(3) process starting at $0$. More specifically, for any measurable $F:\mathcal{C}\big([0,1];\R\big)\to\R_+$ we have
\begin{align}
E[F(\mk_\bullet)]&=\rt{\frac{\pi}{2}}E_0\left[F(R_\bullet)\frac{1}{R_1}\right]\label{eq:Imhof}\\
E[F(\tilde{\mk}_\bullet)]&=E_0\left[F(R_\bullet)\frac{1}{R_1^2}\right]\label{eq:co_Imhof}\\
E[F(\mk_\bullet)]&=\rt{\frac{\pi}{2}}E\left[F(\tilde{\mk}_\bullet)\tilde{\mk}_1\right].\label{eq:meander_co}
\end{align}
The first of these relations \eqref{eq:Imhof} is known as Imhof's relation \cite{Imhof, Penalising}, while \eqref{eq:co_Imhof} appears as Theorem 7.4.1. in \cite{local_times} and \eqref{eq:meander_co} follows from a combination of the previous two. 

\subsection{Absolute continuity relations}

Here we collect some useful absolute continuity relations between the laws of various processes. While the statements involve bounded measurable path functionals $F$, they are also valid for non-negative measurable $F$. The results given without proof can be found in the literature as indicated. The first two relations give us absolute continuity for Brownian motion and Bessel(3) processes starting at different points, as long as we are willing to ignore the initial $[0,\de]$ segment of the path. See \autoref{def:F_delta} for notation that makes this precise.
\begin{lem}\label{lem:absolute}
Let $x\in\R$, $y>0$, and $0<\de\leq 1$. Then for any bounded measurable $F:\mathcal{C}\big([0,1];\R\big)\to\R$ we have
$$E_0\left[F_\de(X_\bullet)\right]=E_x\left[F_\de(X_\bullet)\exp\left(\frac{x^2-2xX_\de}{2\de}\right)\right]$$
and
$$E_0\left[F_\de(R_\bullet)\right]=E_y\left[F_\de(R_\bullet)\frac{yR_\de\exp\left(\frac{y^2}{2\de}\right)}{\de\sinh\left(\frac{yR_\de}{\de}\right)}\right].$$
\end{lem}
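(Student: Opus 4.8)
The plan is to reduce both identities to the elementary fact that the weight appearing on the right-hand side is exactly the Radon--Nikodym derivative relating the time-$\de$ transition densities of the process started from two different points. The structural input that makes this work is the observation recorded just below \autoref{def:F_delta}: the map $\Pi_\de$ leaves the path untouched on $[\de,1]$ and replaces it on $[0,\de)$ by the line segment determined solely by $X_\de$. Hence $F_\de(X_\bullet)$ is $\sigma(X_s:\de\le s\le 1)$-measurable, and in particular it depends on the initial segment only through the single value $X_\de$.

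First I would invoke the Markov property at time $\de$. Fixing a starting point $a$ (equal to $0$, $x$, or $y$ as needed) and writing $\Phi(z):=E_a[F_\de(X_\bullet)\mid X_\de=z]$, the Markov property shows that $\Phi$ does not depend on $a$: conditionally on $\{X_\de=z\}$, the post-$\de$ path is distributed as the process started afresh at $z$, independently of where the path began. Since the weight is itself a function of $X_\de$ alone, disintegrating on the value of $X_\de$ gives
$$E_a\big[F_\de(X_\bullet)\,w(X_\de)\big]=\int \Phi(z)\,w(z)\,p_\de(a,z)\,\dd{z},$$
where $p_\de(a,\cdot)$ denotes the time-$\de$ transition density of the relevant process (Gaussian for $X$, the Bessel(3) kernel for $R$) and $w\equiv 1$ recovers the unweighted left-hand side. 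Thus each identity reduces to a pointwise relation between densities.

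For the Brownian case I would check that
$$\exp\!\left(\frac{x^2-2xz}{2\de}\right)p_\de(x,z)=p_\de(0,z),$$
with $p_\de(a,z)=\frac{1}{\rt{2\pi\de}}\exp\!\big(-(z-a)^2/2\de\big)$; completing the square collapses $\frac{x^2-2xz}{2\de}-\frac{(z-x)^2}{2\de}$ to $-\frac{z^2}{2\de}$, which is exactly the claim. For the Bessel(3) case I would use the transition densities \eqref{eq:Bes0_dens} and \eqref{eq:Besx_dens}, namely $p_\de(y,z)=\frac{2z\sinh(yz/\de)}{y\rt{2\pi\de}}\exp(-(y^2+z^2)/2\de)$ for $z>0$ and $p_\de(0,z)=\rt{2/(\pi\de^3)}\,z^2\exp(-z^2/2\de)$, and verify that multiplying $p_\de(y,z)$ by $\frac{yz\exp(y^2/2\de)}{\de\sinh(yz/\de)}$ cancels the $\sinh$ factor and the factor $y$, leaving precisely $p_\de(0,z)$.

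The argument has no genuine obstacle; the only points requiring care are (i) justifying the disintegration through a regular conditional distribution so the Markov-property step is rigorous for merely measurable $F$, and (ii) keeping track of the Bessel(3) kernel normalization so that the $\sinh$ and the powers of $\de$ match. Both identities then extend from bounded to non-negative measurable $F$ by monotone convergence, as noted at the start of the appendix.
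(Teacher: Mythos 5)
Your proposal is correct and follows essentially the same route as the paper's proof: invoke the Markov property at time $\de$ so that $E_a[F_\de(X_\bullet)\mid X_\de=z]$ is independent of the starting point $a$, disintegrate on $X_\de$, and identify the weight as the ratio $p_\de(0,z)/p_\de(a,z)$ of transition densities. The only difference is cosmetic: the paper carries out the computation for the Brownian case and asserts the Bessel(3) case is analogous, whereas you verify both density ratios explicitly (correctly, including the cancellation of the $\sinh$ factor).
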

\begin{proof}
We only prove the first statement as the same argument applies to the second; see \eqref{eq:Bes0_dens} and \eqref{eq:Besx_dens} for the Bessel(3) transition densities. First note that by the definition of $F_\de$ and the Markov property we have for any $z\in\R$
$$E_0\left[F_\de(X_\bullet)\middle|X_\de=z\right]=E_x\left[F_\de(X_\bullet)\middle|X_\de=z\right].$$
Now by conditioning on $X_\de$ with $p_\de(\cdot,\cdot)$ denoting the transition density of Brownian motion at time $\de$, we can write
\begin{equation*}
\begin{split}
E_0\left[F_\de(X_\bullet)\right]&=\int_{-\infty}^\infty E_0\left[F_\de(X_\bullet)\middle|X_\de =z\right]p_\de(0,z)\dd{z}\\
&=\int_{-\infty}^\infty E_x\left[F_\de(X_\bullet)\middle|X_\de =z\right]\frac{p_\de(0,z)}{p_\de(x,z)}p_\de(x,z)\dd{z}\\
&=\int_{-\infty}^\infty E_x\left[F_\de(X_\bullet)\frac{p_\de(0,X_\de)}{p_\de(x,X_\de)}\middle|X_\de =z\right]p_\de(x,z)\dd{z}\\
&=E_x\left[F_\de(X_\bullet)\frac{\exp\left(-\frac{X_\de^2}{2\de}\right)}{\exp\left(-\frac{(X_\de-x)^2}{2\de}\right)}\right]\\
&=E_x\left[F_\de(X_\bullet)\exp\left(\frac{x^2-2xX_\de}{2\de}\right)\right].
\end{split}
\end{equation*}
\end{proof}

The next relation results from an $h$-transform of Brownian motion by the harmonic function $h(x)=x$. See Section 1.6 of \cite{local_times}.
\begin{prop}\label{prop:h_abs}
Let $x>0$. Then for any bounded measurable $F:\mathcal{C}\big([0,1];\R\big)\to\R$ we have
$$E_x[F(R_\bullet)]=E_x\left[F(X_\bullet)\frac{X_1}{x};I_1>0\right].$$
\end{prop}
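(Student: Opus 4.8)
The plan is to verify the identity first on cylinder functionals and then extend by a monotone class argument. The analytic engine is the factorization of transition kernels. Writing $p_t^0(x,y)=\frac{1}{\rt{2\pi t}}\big[\exp(-(y-x)^2/2t)-\exp(-(y+x)^2/2t)\big]$ for the density of Brownian motion absorbed at $0$ (valid for $x,y>0$ by the reflection principle), the Bessel(3) transition density \eqref{eq:Besx_dens} can be recast as $q_t(x,y)=\frac{y}{x}\,p_t^0(x,y)$. Thus the linear function $h(z)=z$ intertwines the two kernels; this is precisely the analytic shadow of the statement that Bessel(3) is the Doob $h$-transform of Brownian motion killed at $0$.

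With this in hand I would test the identity on a cylinder functional $F(X_\bullet)=f(X_{t_1},\ldots,X_{t_m})$ with $0<t_1<\cdots<t_m\leq 1$ and $f$ bounded measurable. On the Brownian side, the event $\{I_1>0\}$ forces the path to remain positive on $[0,1]$, so by the Markov property the joint sub-density of $(X_{t_1},\ldots,X_{t_m},X_1)$ on this event is the product of absorbed kernels $p_{t_1}^0(x,y_1)\cdots p_{1-t_m}^0(y_m,y_{m+1})$ integrated over $(0,\infty)^{m+1}$; here I have appended the time-$1$ marginal to accommodate the weight $X_1/x=y_{m+1}/x$. On the Bessel side I likewise append the time-$1$ marginal, which is free because $\int_0^\infty q_{1-t_m}(y_m,y_{m+1})\,\dd{y_{m+1}}=1$. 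Substituting $q_s(a,b)=\frac{b}{a}p_s^0(a,b)$ into every factor then produces the telescoping product $\frac{y_1}{x}\cdot\frac{y_2}{y_1}\cdots\frac{y_{m+1}}{y_m}=\frac{y_{m+1}}{x}$, so both sides reduce to $\int f(y_1,\ldots,y_m)\,\frac{y_{m+1}}{x}\prod p^0\,\dd{y}$ and hence coincide.

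It remains to promote the equality from cylinder functionals to arbitrary bounded measurable $F$. The cylinder functionals form a multiplicative class generating the Borel $\sigma$-algebra on $\mathcal{C}\big([0,1];\R\big)$, and both sides are finite signed measures in the argument $F$; in particular the total mass agrees, since the $F\equiv 1$ case is the $m=0$ instance of the telescoping and gives $E_x[X_1;I_1>0]/x=\int_0^\infty \tfrac{y}{x}p_1^0(x,y)\,\dd{y}=\int_0^\infty q_1(x,y)\,\dd{y}=1$. A standard functional monotone class argument then closes the gap.

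I do not anticipate a genuine obstacle, as the entire computation is driven by the density factorization $q_t(x,y)=\frac{y}{x}p_t^0(x,y)$ and its telescoping. The one point deserving care is the justification that the finite-dimensional marginals of $P_x$ restricted to $\{I_1>0\}$ are governed by the absorbed kernel $p^0$ rather than the free Gaussian kernel; this is the standard identification of $p_t^0$ as the sub-probability density of the position on the event of not yet having hit $0$, which rests on the reflection principle together with the strong Markov property at the first hitting time of $0$. Alternatively, one may bypass the hand computation entirely by invoking the general Doob $h$-transform formalism, under which the Radon--Nikodym derivative of the transformed law with respect to the killed law on $\mathcal{F}_1$ is exactly $h(X_1)/h(x)=X_1/x$ on the survival event $\{I_1>0\}$, recovering the claim directly.
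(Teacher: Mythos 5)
Your proof is correct. Note that the paper itself gives no proof of this proposition: it states the identity as a known result of the Doob $h$-transform of Brownian motion killed at $0$ by the harmonic function $h(z)=z$, citing Section 1.6 of \cite{local_times} --- which is precisely the ``alternative'' route you sketch in your closing paragraph. Your argument fills in the details that the citation hides, and it does so correctly: the factorization $q_t(x,y)=\tfrac{y}{x}\,p_t^0(x,y)$ of the Bessel(3) kernel \eqref{eq:Besx_dens} against the absorbed kernel is right (expanding the $\sinh$ in \eqref{eq:Besx_dens} recovers exactly the reflection-principle difference of Gaussians), the telescoping of the ratio weights across a cylinder functional is the mechanism that makes the $h$-transform work, the normalization check $E_x[X_1;I_1>0]=x$ is the optional-stopping identity for absorbed Brownian motion, and the two points you flag as needing care --- that the finite-dimensional sub-densities of $P_x$ on $\{I_1>0\}$ are products of absorbed kernels, and the monotone class extension from cylinder functionals to all bounded Borel-measurable $F$ (legitimate because the Borel $\sigma$-algebra on $\mathcal{C}\big([0,1];\R\big)$ coincides with the $\sigma$-algebra generated by the coordinate maps) --- are both standard and are correctly identified as the only places where real justification is owed. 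The trade-off between your route and the paper's is the usual one: the citation is shorter and situates the statement in the general Doob-transform framework, whereas your computation is self-contained and elementary, using only the reflection principle, the Markov property, and the explicit densities already recorded in the paper's appendix.
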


The law of a Bessel(3) process run up to the last hitting time of $x>0$ is, after rescaling, absolutely continuous with respect to the law of a Bessel(3) process run up to a fixed time. This is a special case of Th\'{e}or\`{e}me 3 in \cite{pseudo_bridge}; see also Theorem 8.1.1. in \cite{local_times}.
\begin{thm}\label{thm:last_hit}
Let $\ga_x$ be the last hitting time of $x>0$ by the Bessel(3) process $R$. Then for any bounded measurable $F:\mathcal{C}\big([0,1];\R\big)\to\R$ we have
$$E_0\left[F\left(\frac{R_{\bullet \ga_x}}{\rt{\ga_x}}\right)\right]=E_0\left[F(R_\bullet)\frac{1}{R_1^2}\right].$$
\end{thm}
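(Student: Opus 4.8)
The plan is to exploit the self-similarity of the Bessel(3) process to reduce to $x=1$, and then to evaluate both sides by disintegrating on the endpoint value of the relevant bridge. First I would observe that the right-hand side does not involve $x$ at all, so it suffices to check that the left-hand side is also independent of $x$ and to compute at $x=1$. This independence follows from Brownian scaling: if $R$ is Bessel(3) from $0$ then $R^{(x)}_t:=R_{x^2 t}/x$ is again Bessel(3) from $0$, its last hitting time of $1$ is $\ga_1(R^{(x)})=\ga_x(R)/x^2$, and a direct substitution shows that $(R_{s\ga_x}/\rt{\ga_x}:0\le s\le 1)$ is literally unchanged when $R$ is replaced by $R^{(x)}$ and $x$ by $1$. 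Hence I may assume $x=1$.

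For the right-hand side I would condition on $R_1=y$ and write it as $\int_0^\infty B_y[F]\,y^{-2}p_1(0,y)\dd y$, where $B_y[F]$ denotes the expectation of $F$ under the Bessel(3) bridge from $0$ to $y$ of length $1$. The crucial point is that the weight $R_1^{-2}=y^{-2}$ exactly cancels the factor $y^2$ in the Bessel(3) transition density \eqref{eq:Bes0_dens}, namely $p_1(0,y)=\rt{2/\pi}\,y^2\exp(-y^2/2)$, leaving the Gaussian weight $\rt{2/\pi}\exp(-y^2/2)$. Thus the right-hand side equals $\rt{2/\pi}\int_0^\infty B_y[F]\exp(-y^2/2)\dd y$, i.e. the average of $F$ over a Bessel(3) bridge from $0$ to $Y$ of length $1$, where $Y$ has the half-normal density $\rt{2/\pi}\exp(-y^2/2)$ on $(0,\infty)$ (which integrates to $1$).

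For the left-hand side I would instead disintegrate on $\ga_1$. The key structural input is the last-exit decomposition of the transient Bessel(3) path: conditionally on $\{\ga_1=t\}$, the fragment $(R_u:0\le u\le t)$ is a Bessel(3) bridge from $0$ to $1$ of length $t$ (equivalently, via Williams' time reversal \autoref{thm:Williams_rev}, the time-reversal of a Brownian first-passage bridge from $1$ to $0$, as in \cite{fp_bridge}). Bridge scaling then sends $(R_{st}/\rt t:0\le s\le 1)$ to a Bessel(3) bridge from $0$ to $1/\rt t$ of length $1$, so the left-hand side equals $\int_0^\infty B_{1/\rt t}[F]\,P_0(\ga_1\in\dd t)$. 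Substituting $y=1/\rt t$ reduces the whole matter to identifying the law of $1/\rt{\ga_1}$. I would obtain this from the classical Bessel(3) hitting probability $P_r(\tau_1<\infty)=(1/r)\wedge 1$ together with the transition density: writing $P_0(\ga_1\le t)=E_0[(1-1/R_t)^+]$ and differentiating gives $\ga_1$ the density $\frac{1}{\rt{2\pi}}t^{-3/2}\exp(-1/(2t))$, so that $1/\rt{\ga_1}$ is half-normal with density $\rt{2/\pi}\exp(-y^2/2)$. This is precisely the endpoint law produced on the right-hand side, whence the two disintegrations coincide term by term.

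The main obstacle is the rigorous justification of the last-exit decomposition, in particular the claim that the pre-$\ga_1$ fragment is an \emph{unconditioned} Bessel(3) bridge (with no extra constraint forcing it to stay below $1$); this is exactly where the Markov property at the last exit time — equivalently the reversal of the Brownian first-passage bridge — must be invoked with care, and it is the content of Th\'eor\`eme 3 in \cite{pseudo_bridge} and Theorem 8.1.1 in \cite{local_times}. By contrast, the remaining ingredients — the scaling reduction to $x=1$, the cancellation making $R_1^{-2}p_1(0,y)$ proportional to $\exp(-y^2/2)$, and the explicit computation of the density of $\ga_1$ from the hitting probability $1/r$ — are routine.
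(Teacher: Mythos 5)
Your proof is correct, but it takes a genuinely different route from the paper: the paper offers no proof of \autoref{thm:last_hit} at all, simply stating it as a special case of Th\'eor\`eme 3 in \cite{pseudo_bridge} (see also Theorem 8.1.1 in \cite{local_times}). What you do instead is give a self-contained double-disintegration argument, and all its computational ingredients check out: the scaling reduction to $x=1$ is exact (the normalized path $\left(R_{s\ga_x}/\rt{\ga_x}\right)$ is literally invariant under $R_t\mapsto R_{x^2t}/x$); the weight $y^{-2}$ does cancel the $y^2$ in \eqref{eq:Bes0_dens}, leaving the half-normal endpoint law; and your density for $\ga_1$ is right --- differentiating $E_0\big[(1-1/R_t)^+\big]$ indeed yields $\frac{1}{\rt{2\pi}}t^{-3/2}\exp\left(-\frac{1}{2t}\right)$, so $1/\rt{\ga_1}$ is half-normal, matching the right-hand side term by term (the measurability of $y\mapsto B_y[F]$ for bounded measurable $F$ follows from the weak continuity in \cite{bridges3} plus a monotone class argument). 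Your approach buys an explicit, checkable proof at the cost of one serious structural input, which you correctly isolate: the last-exit decomposition stating that, given $\ga_1=t$, the fragment $(R_u:0\leq u\leq t)$ is an \emph{unconditioned} Bessel(3) bridge from $0$ to $1$ of length $t$.

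One caution on that crux: citing Th\'eor\`eme 3 of \cite{pseudo_bridge} to justify it is circular, since that th\'eor\`eme is, per the paper, precisely the theorem you are proving. The non-circular route is the one you sketch parenthetically: disintegrate Williams' time reversal (\autoref{thm:Williams_rev}) over $\{\tau_1=t\}=\{\ga_1=t\}$ and identify the reversed Brownian first-passage bridge as a Bessel(3) bridge from $0$ to $1$, which is exactly the $h$-transform-with-fixed-endpoints identification discussed in the Introduction of \cite{fp_bridge} and already used by the paper around \eqref{eq:bes_bridge}. If you make that the primary justification rather than the fallback, the proof stands on its own.
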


\subsection{Path decompositions}

Denisov's path decomposition \cite{Denisov} asserts that the pre and post-maximum parts of a Brownian path are rescaled independent Brownian meanders. See Corollary 17 in Chapter VIII of \cite{Bertoin} for an extension to strictly stable L\'{e}vy processes.

\begin{thm}[Denisov]\label{thm:Denisov}\ \\
Let $\Te$ denote the almost surely unique time at which the Brownian motion $W$ attains its maximum over the time interval $[0,1]$. Then the transformed pre-maximum path
$$\left(\frac{W_\Te-W_{\Te-s\Te}}{\rt{\Te}}:0\leq s\leq 1\right)$$
and the transformed post-maximum path
$$\left(\frac{W_\Te-W_{\Te+s(1-\Te)}}{\rt{1-\Te}}:0\leq s\leq 1\right)$$
are independent Brownian meanders which are independent of $\Te$.
\end{thm}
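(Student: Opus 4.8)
The plan is to decompose the path at the time $\Te$ of its maximum and to recognize each half, after the time reversal and reflection already built into the statement, as a Brownian meander. Writing $M=W_\Te$ for the maximal value, I would first record the pathwise identities underlying the two transformed processes: the pre-maximum path equals $\frac{W_\Te-W_{\Te(1-s)}}{\rt{\Te}}=\frac{\tilde W_{s\Te}}{\rt{\Te}}$, where $\tilde W_r:=W_\Te-W_{\Te-r}$ is the time reversal of $W$ on $[0,\Te]$, while the post-maximum path equals $\frac{W_\Te-W_{\Te+s(1-\Te)}}{\rt{1-\Te}}=\frac{Y_{s(1-\Te)}}{\rt{1-\Te}}$, where $Y_r:=W_\Te-W_{\Te+r}=-(W_{\Te+r}-W_\Te)$. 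Both $\tilde W$ and $Y$ start at $0$ and, because $\Te$ is the argmax, both remain strictly positive on the open interval, with $\tilde W_\Te=M$ and $Y$ having a free right endpoint.

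Next I would condition on $\{\Te=u\}$ for a fixed $0<u<1$ and argue that the two halves become independent, each a Brownian motion conditioned to stay positive over its interval, i.e.\ a Brownian meander of length $u$ (respectively $1-u$). The mechanism is the Markov property at time $u$: given $W_u$, the restrictions $W|_{[0,u]}$ and $W|_{[u,1]}$ are independent, and the event $\{\Te=u\}$ factors as $\{W_t<W_u\text{ for }t<u\}\cap\{W_t<W_u\text{ for }t>u\}$, the first depending only on $W|_{[0,u]}$ and the second only on $W|_{[u,1]}$. Under this conditioning the reflected post-maximum path $Y$ is exactly a Brownian motion started at $0$ conditioned to remain positive on $(0,1-u]$ with free endpoint, which is the Brownian meander of length $1-u$; and after reversal the pre-maximum path $\tilde W$ is a Brownian motion on $[0,u]$ conditioned to stay positive, a meander of length $u$ whose random endpoint is $M$. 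The key point securing independence is that the conditional law of the reflected post-maximum path does not involve the common height $M$ at all (the value $W_u=M$ is reflected away, leaving only the positivity constraint $Y_r>0$), so no coupling between the halves survives and no endpoint constraint forces an excursion-type bridge. Finally, Brownian scaling of the meander shows that the length-$\ell$ meander divided by $\rt{\ell}$ has the law of the normalized meander $\mk$ for every $\ell$; applying this with $\ell=u$ and $\ell=1-u$ erases the dependence on $u$, so the two normalized halves are independent copies of $\mk$ and are jointly independent of $\Te$ (which is arcsine distributed).

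The main obstacle is making the conditioning rigorous, since both $\{\Te=u\}$ and the positivity constraints defining the meander are null events that must be taken jointly. I would handle this by disintegration, replacing $\{\Te=u\}$ with the neighborhood $\{\Te\in(u-\ep,u+\ep)\}$ and identifying the limiting conditional law of each half through the meander limit theorem of \cite{meander_limit} already exploited in \autoref{prop:ascent}; the delicate part is controlling the iterated limits and checking that the factorization of the argmax event survives passage to the limit, so that the conditional independence is genuinely preserved. As an independent cross-check that sidesteps the continuous conditioning, I would run the same decomposition for a simple random walk, where the split at the first maximum into two walks conditioned to stay positive is exact by a cycle-lemma/reflection argument, and then invoke Donsker's invariance principle for the random-walk meander; there the difficulty migrates to joint tightness of the two halves and the convergence of the discrete argmax to $\Te$.
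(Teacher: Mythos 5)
First, a point of comparison: the paper does not prove \autoref{thm:Denisov} at all --- it is stated in the appendix as a classical result, cited to \cite{Denisov} (with the stable extension in Chapter VIII of \cite{Bertoin}), in a section whose results are explicitly ``given without proof.'' So there is no in-paper argument to measure you against; your attempt has to stand on its own. Its overall architecture (split at the argmax, time-reverse and reflect, factor the argmax event through the Markov property at $u$, rescale by Brownian scaling of the meander) is the standard and correct heuristic, and your observation that the reflected post-maximum law does not see the height $M$ is the right reason no bridge-type coupling appears.

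The genuine gap is exactly at the step you flag but do not carry out, and as sketched that step would fail: conditioning on $\{\Te\in(u-\ep,u+\ep)\}$ destroys the factorization you rely on. For fixed split point $u$, on this event the reversed pre-$u$ path $\tilde W_r=W_u-W_{u-r}$ need not stay positive (it dips below $0$ by the random amount $W_\Te-W_u$ when $\Te\neq u$), and the event itself is \emph{not} a product of a $\sigma(W|_{[0,u]})$-event and a $\sigma(W|_{[u,1]})$-event --- the two halves are coupled through which side carries the maximum inside the window. Moreover, the limit theorem of \cite{meander_limit} applies to conditionings of the form $\{I_1>-\ep\}$, not to this coupled event, so you cannot simply ``identify the limiting conditional law of each half'' by citing it; you would need quantitative local estimates, e.g.\ disintegrating jointly on $(\Te,W_\Te)$ in small boxes with first-passage density asymptotics, which is where all the analytic content of the theorem lives. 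Your fallback route is the one to promote to the main argument: it is essentially Denisov's original proof. For simple random walk the split at the first maximum is an exact identity (duality/reflection gives two independent walks conditioned to stay positive, modulo care with ties and strict versus weak maxima), the invariance principles for walks conditioned to stay positive give the two meander limits, and the discrete argmax converges to $\Te$ by a continuous-mapping argument, the argmax functional being continuous precisely at paths with a unique maximum, which Wiener measure charges fully. Joint convergence of the triple (two rescaled halves and the argmax) then yields the claimed independence; even here the joint tightness you mention must be checked, but all ingredients are standard in the literature.
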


Williams' path decomposition for Brownian motion with drift $h<0$ splits the path at the time of the global maximum $\Te_\infty$ by first picking an Exponential$(-2h)$ distributed $S_\infty$ and then running a Brownian motion with drift $-h$ until it hits the level $S_\infty$ for the pre-maximum path and then running Brownian motion with drift $h$ conditioned remain below $S_\infty$ for the post-maximum path. See Theorem 55.9 in Chapter VI of \cite{R&W_vol2} for the following more precise statement.

\begin{thm}[Williams]\label{thm:Williams}\ \\
Suppose $h<0$ and consider the following independent random elements:
\begin{enumerate}[label={}]
\item $(X_t:t\geq 0)$, a Brownian motion with drift $-h$ starting at $0$; 
\item $(R_t:t\geq 0)$, a Brownian motion with drift $h$ starting at $0$ conditioned to be non-positive for all time;
\item and $l$, an Exponential$(-2h)$ random variable.
\end{enumerate}
Let $\tau_l=\inf\{t:X_t=l\}$ be the first hitting time of the level $l$ by $X$. Then the process 
$$\widetilde{X}_t = \left\{
  \begin{array}{ll}
    X_t & : 0\leq t\leq \tau_l \\
    \\
    l+R_{t-\tau_l} & : \tau_l< t.
  \end{array}
  \right.
$$
is Brownian motion with drift $h$ starting at $0$.
\end{thm}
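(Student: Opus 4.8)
The plan is to take a Brownian motion $Y$ with drift $h<0$ started at $0$, split its path at the a.s.\ unique time $\Te_\infty$ at which it attains its global maximum $S_\infty=\sup_{t\ge0}Y_t$, show that the two fragments have exactly the laws described, and then read the statement backwards to recover the synthesis of $\widetilde X$ from the three independent ingredients. First I would record the elementary inputs. Since $h<0$ the maximum $S_\infty$ is a.s.\ finite, is attained at a single time $\Te_\infty=\tau_{S_\infty}$, and $P_0(\tau_a<\infty)=e^{2ha}$ for $a>0$; differentiating $P_0(S_\infty\ge a)=P_0(\tau_a<\infty)=e^{2ha}$ shows $S_\infty$ is Exponential$(-2h)$. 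These I would prove directly from the hitting probability $e^{2ha}$, not from any later consequence of the theorem.

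The core of the argument is the strong Markov property at the first passage time $\tau_a$ of a fixed level $a>0$. On $\{\tau_a<\infty\}$ the post-$\tau_a$ increment $B_t:=Y_{\tau_a+t}-a$ is a Brownian motion with drift $h$ from $0$, independent of $\mathcal F_{\tau_a}$, and $S_\infty=a+\sup_{t\ge0}B_t$. Thus, conditionally on $\tau_a<\infty$, the path up to $\tau_a$ and the overshoot $S_\infty-a=\sup B$ are independent, with $\sup B$ Exponential$(-2h)$. To extract the fragment laws given $\{S_\infty=a\}$ I would condition on the positive-probability slab $\{a\le S_\infty<a+\ep\}=\{\tau_a<\infty\}\cap\{\sup B<\ep\}$ and let $\ep\searrow0$. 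For a bounded continuous functional $f$ of the pre-maximum path this yields
\[
E_0\!\left[f\big((Y_s)_{s\le\Te_\infty}\big)\,\middle|\,S_\infty=a\right]=\frac{E_0\!\left[f\big((Y_s)_{s\le\tau_a}\big)\,\mathbf 1_{\tau_a<\infty}\right]}{P_0(\tau_a<\infty)},
\]
because the extra sliver of path between $\tau_a$ and $\Te_\infty$ rises by less than $\ep$ and is negligible in the limit; the same slab argument shows that, given $S_\infty=a$, the post-maximum fragment $(Y_{\Te_\infty+t}-a:t\ge0)$ is a Brownian motion with drift $h$ conditioned to stay non-positive, independent of the pre-maximum fragment.

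It then remains to identify the pre-maximum law. The displayed conditional expectation is exactly the law of a Brownian motion with drift $h$ \emph{conditioned on} $\{\tau_a<\infty\}$ and stopped at $\tau_a$. Since $x\mapsto P_x(\tau_a<\infty)=e^{2h(a-x)}$ is harmonic for the killed generator, this conditioning is the Doob $h$-transform by $e^{-2hx}$, and the one-line computation $h+(\log e^{-2hx})'=h-2h=-h$ shows the transform turns drift $h$ into $-h$. Hence the pre-maximum path is a Brownian motion with drift $-h$ run until it first hits $a=S_\infty$, which is exactly $X$ stopped at $\tau_l$ in the statement. Assembling the three pieces—$S_\infty\sim\mathrm{Exp}(-2h)$ as $l$, the drift-$(-h)$ motion stopped at $\tau_l$, and the non-positive drift-$h$ motion for the tail, mutually independent by the strong Markov step—reconstructs precisely $\widetilde X$. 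The main obstacle is the conditioning on the null event $\{S_\infty=a\}$: both the pre-maximum identity and the ``conditioned to stay non-positive'' description of the tail must be obtained as genuine $\ep\searrow0$ limits of the slab conditioning, with care that the negligible path-sliver and the degenerate event $\{\sup B=0\}$ are controlled by continuity of the test functional and the explicit exponential law of $\sup B$.
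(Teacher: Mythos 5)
The paper itself contains no proof of this theorem: it is quoted in the appendix as a classical result, with the reader pointed to Theorem 55.9 in Chapter VI of Rogers and Williams, so there is no internal argument to compare yours against. Judged on its own terms, your strategy is sound and is one of the standard routes: identify $S_\infty\sim\mathrm{Exp}(-2h)$ from $P_0(\tau_a<\infty)=e^{2ha}$, split the drift-$h$ path at its a.s.\ unique maximum time, obtain the pre-maximum fragment as the Doob $h$-transform of the killed process by the harmonic function $x\mapsto P_x(\tau_a<\infty)\propto e^{-2hx}$ (your drift computation $h-2h=-h$ is exactly right), and get independence and the post-maximum fragment from the strong Markov factorization at $\tau_a$ together with the slab conditioning $\{a\le S_\infty<a+\ep\}=\{\tau_a<\infty\}\cap\{\sup B<\ep\}$; reading the resulting joint law of the triple backwards does recover the synthesis statement, since the original path is the measurable splice of its fragments. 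Two points need more than your passing mention of "care." First, slab-conditional estimates carry a factor $1/P(\sup B<\ep)\asymp C/\ep$, so controlling the sliver between $\tau_a$ and $\Te_\infty$ cannot go through the crude bound $P(A\mid\sup B<\ep)\le P(A)/P(\sup B<\ep)$; instead, observe that the event that the maximum of $B$ occurs after time $\de$ forces the post-$\de$ path to place its supremum in a window of length at most $\ep$, and the resulting hitting-probability difference $e^{2hM}-e^{2h\ep}\le -2h(\ep-M)$ supplies the compensating factor $O(\ep)$, giving $P(\Te^B>\de\mid\sup B<\ep)\to 0$ and hence negligibility of the sliver. Second, your claim that "the same slab argument" handles the post-maximum fragment flirts with circularity, since decomposing $B$ at its own maximum is the theorem itself; the non-circular fix is that once the sliver collapses, the post-maximum fragment of $B$ under $\{\sup B<\ep\}$ differs from the whole conditioned path $B$ only by a time shift $\Te^B\to 0$ and a space shift $\sup B\in[0,\ep)$, and since "Brownian motion with drift $h$ conditioned to be non-positive" is \emph{defined} as the $\ep\searrow 0$ limit of conditioning on $\{\sup B<\ep\}$, the limit identification and the independence (and $a$-independence) of the post-maximum law follow directly from the factorization. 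With these two repairs your argument is complete, and it is genuinely different in flavor from the classical proofs, which proceed via time reversal or excursion theory.
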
 

Williams' time reversal connects the laws of Brownian motion run until a first hitting time and a Bessel(3) process run until a last hitting time, see Theorem 49.1 in Chapter III of \cite{R&W_vol1}.
\begin{thm}[Williams]\label{thm:Williams_rev}\ \\
Let $\tau_1=\inf\{t:X_t=0\}$ be the first hitting time of $1$ by the Brownian motion $X$ started at $0$. Let $\ga_1=\sup\{t:R_t=1\}$ be the last hitting time of $1$ by the Bessel(3) process $R$ started at $0$. Then the following equality in law holds:
$$(1-X_{\tau_1-t}:0\leq t\leq\tau_1)\stackrel{\mathcal{L}}{=}(R_t:0\leq t\leq\ga_1).$$
\end{thm}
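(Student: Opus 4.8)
The plan is to establish this classical time-reversal identity by matching the finite-dimensional distributions of the two path processes and then upgrading to equality in law via path continuity. The first step is a reduction. After reflecting through the map $x\mapsto 1-x$, the left-hand side $(1-X_{\tau_1-t}:0\le t\le\tau_1)$ is exactly the time-reversal of $(1-X_u:0\le u\le\tau_1)$, and the latter is a Brownian motion started at $1$ and run until its first hit of $0$. So the theorem is equivalent to the assertion that reversing a Bessel(3) process started at $0$ from its last hitting time $\ga_1$ of $1$ yields a Brownian motion started at $1$ and killed at $0$. I would prove it in this reversed form, since the endpoints then match transparently: $R_{\ga_1}=1$ reverses to the starting point $1$ of the killed Brownian motion, and $R_0=0$ reverses to its killing point.

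The engine is the interplay of three facts: (i) the $h$-transform underlying \autoref{prop:h_abs}, which at the level of transition densities reads $q_r(x,y)=\tfrac{y}{x}\,\bar p_r(x,y)$, where $\bar p_r(x,y)=p_r(x,y)-p_r(x,-y)$ is the transition density of Brownian motion on $(0,\infty)$ killed at $0$; (ii) the symmetry $\bar p_r(x,y)=\bar p_r(y,x)$ of this killed semigroup with respect to Lebesgue measure; and (iii) the last-exit decomposition of a transient diffusion, which gives, for $0<s_1<\cdots<s_n<t$,
$$P_0\big(R_{s_1}\in\dd{y_1},\ldots,R_{s_n}\in\dd{y_n},\ \ga_1\in\dd{t}\big)=q_{s_1}(0,y_1)\Big[\prod_{i=2}^n q_{s_i-s_{i-1}}(y_{i-1},y_i)\Big]\frac{q_{t-s_n}(y_n,1)}{G(1,1)}\dd{y}\dd{t},$$
where $G$ denotes the Bessel(3) Green's function and the final factor is the density of the last hit of $1$ started from $y_n$.

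The computation then proceeds as follows. I would relabel the observation times in reversed time via $z_j=y_{n+1-j}$ and $w_j=t-s_{n+1-j}$ and integrate out $t$; the leading factor becomes the entrance Green's function $G(0,z_n)=\int_0^\infty q_r(0,z_n)\dd{r}=2z_n$, computed directly from \eqref{eq:Bes0_dens}. Substituting $q=\tfrac{y}{x}\bar p$ into every factor, the ratios $\tfrac{y}{x}$ telescope to $\tfrac{z_1}{z_n}$, which combines with $G(0,z_n)=2z_n$, the normalization $1/G(1,1)=\tfrac12$, and the factor $1/z_1$ coming from the last-exit term to collapse exactly to the scalar $1$; after applying the symmetry in (ii), what survives is $\bar p_{w_1}(1,z_1)\prod_{i=1}^{n-1}\bar p_{w_{i+1}-w_i}(z_i,z_{i+1})$, which is precisely the finite-dimensional density of Brownian motion started at $1$ and killed at $0$. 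Equality of all finite-dimensional distributions plus continuity of the paths then gives the equality in law, which I would finally transport back through the reflection and the reversal to recover the stated identity.

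The main obstacle is item (iii). Because $\ga_1$ is a last-exit time rather than a stopping time, the Markov property cannot be invoked at $\ga_1$, and justifying the displayed decomposition requires either the theory of time reversal at co-optional times or an excursion-theoretic analysis of the local time of $R$ at level $1$, together with care at the two boundary points: $0$ is an entrance boundary for the Bessel(3) process (handled by the limit defining $G(0,\cdot)$), while the normalization at $1$ enters through the constant $1/G(1,1)$. Once this decomposition is in hand, everything downstream is the bookkeeping of $h$-transform factors sketched above and presents no real difficulty.
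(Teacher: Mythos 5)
The paper never proves \autoref{thm:Williams_rev}: it is stated in the appendix and attributed to Theorem 49.1 in Chapter III of \cite{R&W_vol1}, with the appendix explicitly saying such results are quoted from the literature. So the only comparison available is with the classical sources, where your route --- reducing by the reflection $x\mapsto 1-x$ to the statement that the reversed Bessel(3) path from $\ga_1$ is Brownian motion from $1$ killed at $0$, then matching finite-dimensional distributions through the $h$-transform structure --- is the standard one. Your bookkeeping checks out: from \eqref{eq:Besx_dens} one reads off $q_t(x,y)=\frac{y}{x}\bar p_t(x,y)$ with $\bar p_t(x,y)=\sqrt{2/(\pi t)}\sinh(xy/t)e^{-(x^2+y^2)/(2t)}$ the killed-Brownian density, so $G(1,1)=2$ and $G(0,z)=2z$ (consistent with integrating \eqref{eq:Bes0_dens} in $t$); the scalar factors $2z_n\cdot\frac{z_1}{z_n}\cdot\frac{1}{z_1}\cdot\frac{1}{2}$ do collapse to $1$, and the symmetry of $\bar p$ then produces exactly the fdd densities of Brownian motion started at $1$ and killed at $0$. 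The final upgrade is also sound: since $R_t>0$ for all $t>0$ almost surely while $R_0=0$, the reversed path is strictly positive on $[0,\ga_1)$ and vanishes at $\ga_1$, so on both sides the lifetime is the first zero of the path, a path functional, and fdd equality plus continuity gives equality in law on path space with random lifetime.

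The one step you flag as an obstacle, the last-exit decomposition (iii), is indeed the crux but needs far less machinery than time reversal at co-optional times or excursion theory. The Markov property at the \emph{deterministic} time $s_n$ reduces it to the one-point identity $P_y(\ga_1\in\dd{t})=\frac{q_t(y,1)}{G(1,1)}\dd{t}$ for $t>0$ (the atom of $\ga_1$ at $0$ when $y>1$ never enters, since you only use $t>s_n$). For Bessel(3) this has an elementary derivation: the scale function $-1/z$ gives $P_z(\tau_1<\infty)=1\wedge z^{-1}$, hence $P_y(\ga_1>t)=E_y\left[1\wedge R_t^{-1}\right]=\frac{1}{y}\int_0^\infty \bar p_t(y,z)\,(z\wedge 1)\dd{z}$; differentiating in $t$ via $\partial_t\bar p=\frac{1}{2}\partial_z^2\bar p$ and integrating by parts twice against $z\wedge 1$, whose second distributional derivative is $-\de_1$ and whose boundary contributions vanish, yields $\frac{\partial}{\partial t}P_y(\ga_1>t)=-\frac{1}{2y}\bar p_t(y,1)=-\frac{1}{2}q_t(y,1)$, which is the claimed density. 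With that two-line computation inserted, your proposal is a complete and correct proof of the theorem the paper imports as a black box.
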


\subsection{Density and distribution formulas}

The Bessel(3) transition density formulas
\begin{equation}\label{eq:Bes0_dens}
p_t(0,y)=\rt{\frac{2}{\pi t^3}}y^2\exp\left(-\frac{y^2}{2t}\right)\dd{y}
\end{equation}
and
\begin{equation}\label{eq:Besx_dens}
p_t(x,y)=\rt{\frac{2}{\pi t}}\frac{y}{x}\sinh\left(\frac{xy}{t}\right)\exp\left(-\frac{x^2+y^2}{2t}\right)\dd{y},
\end{equation}
valid for $y\geq 0$ and $x,t>0$, can be found in Chapter XI of \cite{Rev_Yor}. The density formula for the endpoint of a Brownian meander 
\begin{equation}\label{eq:meander}
P(\mk_1\in\dd{y})=y\exp\left(-\frac{y^2}{2}\right)\dd{y},~y\geq 0
\end{equation}
follows from \eqref{eq:Bes0_dens} together with the Imhof relation \eqref{eq:Imhof}. The well-known arcsine law for the time of the maximum of Brownian motion states that 
\begin{equation}\label{eq:arcsine}
P_0(\Te_1\in\dd{u})=\frac{1}{\pi\rt{u(1-u)}}\dd{u},~0<u<1.
\end{equation}
Using Denisov's path decomposition \autoref{thm:Denisov}, the densities \eqref{eq:meander} and \eqref{eq:arcsine} can be combined to yield the joint density 
\begin{equation}\label{eq:tri_var}
P_0\left(S_1\in\dd{x}, S_1-X_1\in\dd{y}, \Te_1\in\dd{u}\right)=\frac{xy}{\pi\rt{u^3(1-u)^3}}\exp\left(-\frac{x^2}{2u}-\frac{y^2}{2(1-u)}\right)\dd{x}\dd{y}\dd{u}
\end{equation}
which holds for $x,y\geq 0$ and $0<u<1$.

The maximum of a Brownian bridge from $0$ to $a$ of length $T>0$ has distribution
\begin{equation}\label{eq:bridge_max}
P_0\left(\sup_{0\leq s\leq T}X_s\geq b\middle|X_T=a\right)=\exp\left(-\frac{2b(b-a)}{T}\right)
\end{equation}
where $b\geq\max\{0,a\}$, see (4.3.40) in \cite{K&S}.

\subsection{Asymptotic analysis tools}
The following versions of these standard results in asymptotic analysis can be found in \cite{Olver} and \cite{algorithms}, respectively.

\begin{lem}[Watson's lemma]\label{lem:Watson}\ \\
Let $q(x)$ be a function of the positive real variable $x$, such that
$$q(x)\sim\sum_{n=0}^\infty a_n x^\frac{n+\la-\mu}{\mu}~\text{as}~x\to 0,$$
where $\la$ and $\mu$ are positive constants. Then
$$\int_0^\infty q(x)e^{-t x}\dd{x}\sim\sum_{n=0}^\infty \Ga\left(\frac{n+\la}{\mu}\right)\frac{a_n}{t^\frac{n+\la}{\mu}}~\text{as}~t\to\infty.$$
\end{lem}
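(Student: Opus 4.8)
The plan is to establish the asymptotic series one term at a time, building on the elementary Gamma integral
$$\int_0^\infty x^{s-1}e^{-tx}\dd{x}=\frac{\Ga(s)}{t^s},\qquad s>0,~t>0,$$
which evaluates exactly the contribution of each monomial $x^{(n+\la)/\mu-1}$ that appears in the hypothesized expansion of $q$. Implicit in the statement is a mild regularity and growth condition—say $q$ locally integrable with $q(x)=O(e^{bx})$ for some $b$—guaranteeing that the Laplace integral converges for all large $t$; I would record this at the outset. Near the origin convergence is automatic since the leading exponent is $(\la-\mu)/\mu=\la/\mu-1>-1$ by virtue of $\la>0$.

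First I would fix an arbitrary $N$ and split off the partial sum. By the meaning of the asymptotic relation there is a $\de>0$ and a constant $C_N$ with
$$\Bigl|q(x)-\sum_{n=0}^{N-1}a_n x^{(n+\la)/\mu-1}\Bigr|\le C_N\,x^{(N+\la)/\mu-1},\qquad 0<x\le\de.$$
I would then decompose $\int_0^\infty=\int_0^\de+\int_\de^\infty$. The tail is beyond all algebraic orders: bounding $e^{-tx}\le e^{-t\de/2}e^{-tx/2}$ on $[\de,\infty)$ and invoking the growth bound on $q$ shows $\int_\de^\infty q(x)e^{-tx}\dd{x}=O(e^{-t\de/2})$, hence negligible against every power $t^{-(n+\la)/\mu}$.

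The core estimate is on $\int_0^\de$. For each monomial, $\int_0^\de a_n x^{(n+\la)/\mu-1}e^{-tx}\dd{x}$ equals the full Gamma integral $a_n\Ga((n+\la)/\mu)t^{-(n+\la)/\mu}$ minus an exponentially small tail. For the remainder I would use the uniform bound above to get
$$\Bigl|\int_0^\de\Bigl(q(x)-\sum_{n=0}^{N-1}a_n x^{(n+\la)/\mu-1}\Bigr)e^{-tx}\dd{x}\Bigr|\le C_N\int_0^\infty x^{(N+\la)/\mu-1}e^{-tx}\dd{x}=C_N\,\Ga\!\left(\tfrac{N+\la}{\mu}\right)t^{-(N+\la)/\mu},$$
which is exactly the order of the first omitted term. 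Collecting the pieces yields
$$\int_0^\infty q(x)e^{-tx}\dd{x}=\sum_{n=0}^{N-1}a_n\Ga\!\left(\tfrac{n+\la}{\mu}\right)t^{-(n+\la)/\mu}+O\!\left(t^{-(N+\la)/\mu}\right),$$
and since $N$ is arbitrary this is precisely the claimed expansion.

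The main obstacle is conceptual rather than computational: one must keep in mind that ``$\sim$'' denotes an asymptotic, not necessarily convergent, series, so the argument has to be carried out for each fixed $N$ with the remainder controlled at the correct order, and one must supply the growth and integrability hypotheses on $q$ that render the Laplace integral well defined and its tail exponentially small. Once those points are pinned down, the estimates above are entirely routine.
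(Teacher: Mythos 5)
The paper does not prove this lemma at all: it is quoted in \autoref{app:appendix} as a standard tool, with a citation to \cite{Olver}, and is then only applied in \autoref{prop:asymptotics} and \autoref{sec:theorem_2}. Your argument is the canonical textbook proof of Watson's lemma (truncate the expansion at order $N$, evaluate each monomial by the Gamma integral up to an exponentially small tail, bound the remainder by $C_N\,\Ga\bigl(\tfrac{N+\la}{\mu}\bigr)t^{-(N+\la)/\mu}$, and note $N$ is arbitrary), and it is correct as written --- including your accurate observation that the statement as quoted leaves implicit the integrability and growth hypotheses on $q$ (e.g.\ $q(x)=O(e^{bx})$) needed for the Laplace integral to converge and for the tail $\int_\de^\infty$ to be exponentially negligible.
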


\begin{thm}[Laplace's method]\label{thm:Laplace}\ \\
Define
$$I(t)=\int_a^b f(x)e^{-t h(x)}\dd{x}$$
where $-\infty\leq a<b\leq\infty$ and $t>0$. Assume that:
\begin{enumerate}[label=\normalfont\roman*.)]
\item $h(x)$ has a unique minimum on $[a,b]$ at point $x=x_0\in(a,b)$,
\item $h(x)$ and $f(x)$ are continuously differentiable in a neighborhood of $x_0$ with $f(x_0)\neq 0$ and
$$h(x)=h(x_0)+\frac{1}{2}h''(x_0)(x-x_0)^2+O\big((x-x_0)^3\big)~\text{as}~x\to x_0,$$
\item the integral $I(t)$ exists for sufficiently large $t$.
\end{enumerate}
Then
$$I(t)=f(x_0)\rt{\frac{2\pi}{t h''(x_0)}}e^{-t h(x_0)}\left(1+O\left(\frac{1}{\rt{t}}\right)\right)~\text{as}~t\to\infty.$$
\end{thm}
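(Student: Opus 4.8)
The plan is to reduce everything to a Gaussian integral by localizing around $x_0$ and rescaling. Writing $g(x):=h(x)-h(x_0)\ge 0$, which vanishes only at $x_0$ by hypothesis (i), I would first factor out the overall exponential scale and study
$$J(t):=e^{t h(x_0)}I(t)=\int_a^b f(x)e^{-t g(x)}\dd{x},$$
so that the claim becomes $J(t)=f(x_0)\rt{2\pi/(t h''(x_0))}\,\big(1+O(t^{-1/2})\big)$. Since $h''(x_0)>0$, the second-order expansion in hypothesis (ii) yields a quadratic lower bound $g(x)\ge \tfrac14 h''(x_0)(x-x_0)^2$ on a fixed interval $[x_0-\de,x_0+\de]$, and I would fix such a $\de$ once and for all.

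Next comes the localization step: I would split $J(t)$ into the contribution from $[x_0-\de,x_0+\de]$ and that from its complement, and show the complement is negligible relative to $t^{-1/2}$. On any compact piece of the complement this is immediate from the unique-minimum hypothesis, since there $g$ is bounded below by a positive constant and the contribution is exponentially small; on an unbounded piece I would write $e^{-t g}=e^{-t_0 g}\,e^{-(t-t_0)g}$ for a $t_0$ at which $I(t_0)$ converges absolutely, invoking hypothesis (iii), and use that $g$ stays away from $0$ off a neighborhood of $x_0$. This is the step I expect to be the main obstacle. On an infinite interval one must genuinely rule out mass escaping to infinity where $g$ may return toward $0$, and it is precisely the interplay of the unique-minimum condition (i) with the convergence hypothesis (iii) that is doing the work; without such control the leading constant can be corrupted by a far-field contribution of the same order $t^{-1/2}$.

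For the main contribution I would substitute $u=\rt{t}\,(x-x_0)$, turning the local integral into
$$\frac{1}{\rt{t}}\int_{|u|\le\de\rt{t}} f\!\left(x_0+\tfrac{u}{\rt{t}}\right)\exp\!\left(-t\,g\!\left(x_0+\tfrac{u}{\rt{t}}\right)\right)\dd{u}.$$
Hypothesis (ii) gives $t\,g(x_0+u/\rt{t})=\tfrac12 h''(x_0)u^2+O(|u|^3/\rt{t})$, and the quadratic lower bound $tg\ge \tfrac14 h''(x_0)u^2$ controls the tail in $u$ uniformly, so that on the bulk region the integrand equals $f(x_0)\exp\!\big(-\tfrac12 h''(x_0)u^2\big)$ times a factor $1+O\big((|u|+|u|^3)/\rt{t}\big)$; the $O(|u|/\rt{t})$ piece comes from $f(x_0+u/\rt{t})-f(x_0)$ via the $C^1$ regularity of $f$, while the contribution of $|u|\gtrsim t^{1/6}$ is exponentially small by the same lower bound.

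Finally I would integrate against the Gaussian. Because $\exp\!\big(-\tfrac12 h''(x_0)u^2\big)$ has finite moments of every order, the leading term integrates to $f(x_0)\rt{2\pi/h''(x_0)}$ and the correction terms integrate to quantities of order $t^{-1/2}$, so dividing by $\rt{t}$ produces the claimed $1+O(t^{-1/2})$; extending the $u$-range from $|u|\le\de\rt{t}$ to all of $\R$ costs only an exponentially small error. Combining this local asymptotic expansion with the localization estimate of the second paragraph then yields the stated formula.
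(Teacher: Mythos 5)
You should know at the outset that the paper contains no proof of this theorem: it is stated in the appendix as a standard tool, with the reader referred to \cite{algorithms}. So the only meaningful comparison is with the canonical textbook argument, and your scheme---factor out $e^{-th(x_0)}$, localize to $|x-x_0|\le\delta$, rescale $u=\sqrt{t}\,(x-x_0)$, compare with the Gaussian, and control the bulk error via $f(x_0+u/\sqrt{t})-f(x_0)=O(|u|/\sqrt{t})$ and $t\,g(x_0+u/\sqrt{t})-\tfrac12h''(x_0)u^2=O(|u|^3/\sqrt{t})$---is exactly that argument. The local half of your proof is sound as sketched, including the quadratic lower bound on $[x_0-\delta,x_0+\delta]$, the cutoff at $|u|\sim t^{1/6}$, and the bookkeeping that turns $\int(|u|+|u|^3)e^{-\frac12 h''(x_0)u^2}\dd{u}<\infty$ into the claimed relative error $O(t^{-1/2})$.

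The genuine gap is at precisely the step you flagged, and your proposed resolution does not close it. You invoke ``$g$ stays away from $0$ off a neighborhood of $x_0$,'' but this is not a consequence of hypotheses (i)--(iii): hypothesis (i) is pointwise, hypothesis (ii) is purely local, and no regularity of $h$ is assumed away from $x_0$, so $\inf_{|x-x_0|\ge\delta}g$ can be $0$ even on a bounded interval. Moreover hypothesis (iii) genuinely fails to rescue the statement. Take $[a,b]=[0,\infty)$, $x_0=1$, $h(x)=(x-1)^2$ on $[0,2]$; on each block $[n,n+1]$, $n\ge 2$, let $h\equiv\epsilon_n:=2^{-4^n}$ on the middle half of the block, interpolating continuously to the value $1$ at the endpoints, and let $f\equiv 2^{-n}$ there with $f\equiv 1$ on $[0,2]$. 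All three hypotheses hold (in particular $f\in L^1$, so $I(t)$ exists for every $t>0$, and $x_0$ is the unique minimum since $h\ge\epsilon_n>0$ off $[0,2]$), yet at $t_n=\epsilon_n^{-1}$ the $n$-th block alone contributes at least $\tfrac12 e^{-1}2^{-n}$, which dwarfs the predicted $\Theta(t_n^{-1/2})$ main term; the conclusion fails along $t_n$. So the theorem as literally stated carries an implicit tail hypothesis---for instance $\inf_{|x-x_0|\ge\delta}\big(h(x)-h(x_0)\big)>0$ for every $\delta>0$, automatic when $h$ is continuous on a compact $[a,b]$ or coercive---and under that hypothesis your factorization $e^{-tg}=e^{-t_0g}e^{-(t-t_0)g}$, with $t_0$ chosen so that $\int_a^b|f|e^{-t_0g}\dd{x}<\infty$, does render the far field exponentially negligible and completes your proof. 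None of this affects the paper itself: in its one application of the theorem (the $L_3$ partition function asymptotic in \autoref{prop:asymptotics}), the phase $y\mapsto hy+y^2/2$ on $[0,\infty)$ with drift parameter $h<0$ is continuous, strictly convex and coercive, so the implicit condition holds trivially.
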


\subsection{Convergence lemma}

Here we give a Fatou-type lemma that helps streamline the proofs of the main theorems.

\begin{lem}\label{lem:Fatou}
Suppose $\{F_t\}_{t\geq 0}$, $F$, $\{X_t\}_{t\geq 0}$, and $X$ are all integrable functions defined on the same measure space $(\Omega,\Sigma,\mu)$ such that $\{|F_t|\}_{t\geq 0}$ are bounded by $M>0$, $\{X_t\}_{t\geq 0}$ are non-negative, $\int X_t\dd{\mu}\to \int X\dd{\mu}$, and both $F_t\to F$ and $X_t\to X$ $\mu$-almost surely. Then we have
$$\lim_{t\to\infty}\int F_t X_t\dd{\mu}=\int FX\dd{\mu}.$$
\end{lem}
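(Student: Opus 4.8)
The plan is to recognize this as the generalized dominated convergence theorem (Pratt's lemma), the key observation being that although there is no single dominating function, the family $M X_t$ serves as a varying dominator that converges nicely. First I would reduce the continuous limit to sequences: since $\lim_{t\to\infty} a_t = L$ holds iff $a_{t_n} \to L$ for every sequence $t_n \to \infty$, and since all of the hypotheses (the a.s. convergences $F_t \to F$ and $X_t \to X$, as well as $\int X_t \dd{\mu} \to \int X \dd{\mu}$) pass to any such sequence with the same exceptional null set, it suffices to fix an arbitrary sequence $t_n \to \infty$ and prove that $\int F_{t_n} X_{t_n} \dd{\mu} \to \int FX \dd{\mu}$.

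Next I would set up the machinery. Write $f_n = F_{t_n} X_{t_n}$ and $g_n = M X_{t_n}$, and let $f = FX$, $g = MX$. Because $X_{t_n} \ge 0$ and $|F_{t_n}| \le M$, we have $|f_n| \le g_n$, so each $f_n$ is integrable; passing to the limit in $|F_t| \le M$ gives $|F| \le M$ a.s., whence $|f| \le g$ and $f$ is integrable as well. By hypothesis $f_n \to f$ and $g_n \to g$ $\mu$-almost surely, and $\int g_n \dd{\mu} = M \int X_{t_n} \dd{\mu} \to M \int X \dd{\mu} = \int g \dd{\mu}$.

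The heart of the argument is a two-sided Fatou estimate. Both $g_n + f_n = (M + F_{t_n}) X_{t_n}$ and $g_n - f_n = (M - F_{t_n}) X_{t_n}$ are non-negative, so Fatou's lemma applies to each. Applying it to $g_n + f_n$ and using $\int g_n \dd{\mu} \to \int g \dd{\mu}$ gives $\int (g+f)\dd{\mu} \le \int g \dd{\mu} + \liminf_n \int f_n \dd{\mu}$, hence $\int f \dd{\mu} \le \liminf_n \int f_n \dd{\mu}$; applying it to $g_n - f_n$ gives $\int (g - f)\dd{\mu} \le \int g \dd{\mu} - \limsup_n \int f_n \dd{\mu}$, hence $\limsup_n \int f_n \dd{\mu} \le \int f \dd{\mu}$. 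Together these sandwich $\int f_n \dd{\mu}$ and yield $\lim_n \int f_n \dd{\mu} = \int f \dd{\mu}$. As the sequence $t_n \to \infty$ was arbitrary, the claim follows.

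There is no serious obstacle here; the only points requiring care are verifying that $M X_t$ is a legitimate varying dominator (non-negativity of $g_n \pm f_n$ and integrability of the products, both following from $X_t \ge 0$ and $|F_t| \le M$) and justifying the cancellation of the $\int g_n \dd{\mu}$ terms in the two Fatou bounds. This last point is exactly where the hypothesis $\int X_t \dd{\mu} \to \int X \dd{\mu}$ enters, and it is what replaces the fixed dominating function of the classical dominated convergence theorem.
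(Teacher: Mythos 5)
Your proof is correct and is essentially the same argument as the paper's: a two-sided application of Fatou's lemma to the non-negative functions $(M+F_t)X_t$ and $(M-F_t)X_t$, using $\int X_t\dd{\mu}\to\int X\dd{\mu}$ to cancel the $M\int X\dd{\mu}$ terms and sandwich the limit. The reduction to sequences and the framing via Pratt's lemma are harmless additions but do not change the substance.
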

\begin{proof}
Notice that $(M+F_t)X_t$ is non-negative for all $t\geq 0$. So by Fatou's lemma we have
$$M\int X\dd{\mu}+\liminf_{t\to\infty}\int F_t X_t\dd{\mu}=\liminf_{t\to\infty}\int (M+F_t)X_t\dd{\mu}\geq\int(M+F)X\dd{\mu}$$
which implies
$$\liminf_{t\to\infty}\int F_t X_t\dd{\mu}\geq\int FX\dd{\mu}.$$
Similarly, $(M-F_t)X_t$ is non-negative for all $t\geq 0$, hence  
$$M\int X\dd{\mu}-\limsup_{t\to\infty}\int F_t X_t\dd{\mu}=\liminf_{t\to\infty}\int (M-F_t)X_t\dd{\mu}\geq\int (M-F)X\dd{\mu}$$
which implies
$$\limsup_{t\to\infty}\int F_t X_t\dd{\mu}\leq\int FX\dd{\mu}.$$
Together these inequalities imply that
$$\lim_{t\to\infty}\int F_t X_t\dd{\mu}=\int FX\dd{\mu}.$$
\end{proof}

\bibliography{bibfile}
\bibliographystyle{amsalpha}

\end{document}